\documentclass[12pt]{amsart}
\usepackage{amsmath,amssymb, amsfonts,textcomp, amsthm, mathtools}
\usepackage{subfigure}

\usepackage{lmodern}
\usepackage[T1]{fontenc}
\usepackage{tikz}
\usepackage{tikz-cd}
\usepackage{breqn}
\usepackage{comment}
\usepackage{hyperref}
\usepackage[utf8]{inputenc}

\theoremstyle{plain}
\newtheorem{theorem}{Theorem}[section]
\newtheorem{lemma}[theorem]{Lemma}
\newtheorem{corollary}[theorem]{Corollary}
\newtheorem{proposition}[theorem]{Proposition}

\theoremstyle{definition}
\newtheorem{definition}[theorem]{Definition}
\newtheorem{example}[theorem]{Example}

\theoremstyle{remark}

\usepackage{color}
\usepackage{fancyvrb}

\DefineVerbatimEnvironment{Highlighting}{Verbatim}{commandchars=\\\{\}}
\usepackage{framed}
\definecolor{shadecolor}{RGB}{248,248,248}

\tikzset{cong/.style={draw=none,edge node={node [sloped, allow upside down, auto=false]{\(\cong\)}}},
	Isom/.style={draw=none,every to/.append style={edge node={node [sloped, allow upside down, auto=false]{\(\cong\)}}}}}
\tikzset{sim/.style={draw=none,edge node={node [sloped, allow upside down, auto=false]{\(\sim\)}}},
	Sim/.style={draw=none,every to/.append style={edge node={node [sloped, allow upside down, auto=false]{\(\sim\)}}}}}

\begin{document}
	\title{A Martindale-like Ring of Quotients for any ring}
	\author{K.S. Enoch Lee}
	\address{Department of Mathematics , Auburn University at Montgomery, Montgomery, AL 36117}
	\email{elee4@aum.edu}
		\subjclass[2020]{16S85, 16P70, 16S90}
\keywords{Martindale, rings of quotients, dense ideals, centers}

		\begin{abstract}
	In this paper we introduce a Martindale-like ring of quotients for any ring called sub-Martindale  ring of quotients. When the base ring is semiprime, its maximal sub-Martindale ring of quotients is just the well-known Martindale ring of quotients. In a similar fashion we introduce some rings of quotients for any ring similar to Martindale symmetric ring of quotients.
		\end{abstract}
	\maketitle

\section{Introduction}

		Utumi~\cite{Utumi56} introduced the notion of general ring of quotients and showed that rings without total left zero divisors are those  which have  maximal right rings of quotients. These maximal right rings of quotients can be characterized by the study of dense right ideals of their base rings.

		Martindale~\cite{Martindale69} introduced a special ring of quotients for prime rings by investigating ideals that are also dense right ideals. Later Amitsur~\cite{Amitsur72} extended this concept to semiprime rings. These  rings of quotients are called Martindale rings of quotients. We extend the construction to any rings. 
		
		In the preliminaries, containing basic definitions, we introduce the notion of right (or left) Martindale sets. In the Section 2, we associate to each Martindale set a ring of quotients and call it sub-Martindale ring of quotients. We show that some well-known properites of the  Martindale ring of quotients (of a semiprime ring) are also true in  sub-Martindale ring of quotients. We demonstrate the existence of a unique maximal sub-Martindale ring of quotients and show that this is equivalent to the Martindale ring of quotients when the base ring is semiprime.  
 Section 3 is devoted to the development  of sub-Martindale symmetric ring of quotients. It is shown that a unique maximal sub-Martindale symmetric ring of quotients for any ring exists.
We then investigate the notion of center of a ring and the relation to its sub-Martindale rings of quotients in Section 4. Examples are presented in the last section. 
		
		\section{{Preliminaries}}
		
		All rings are (not necessary commutative) associative with a unity.  Suppose
		\(R\) is a ring, we denote \(1_R\) the unity of \(R\).  A ring
		homomorphism preserves unity unless stated otherwise. A subring, \(S\), of a ring \(R\)
		is a ring which shares the same unity with \(R\). In this case, \(R\) is called an
		extension ring  of \(S\).   In general, if
		there is a ring monomorphism from \(S\) to \(R\), we identify \(S\) with its image
		in \(R\) and consider \(R\) as an  extension ring of \(S\). 		
		We denote by
		\(N_R \leqslant_r M_R\) if \(N\) is a right \(R\)-submodule of \(M_R\). Similarly \(_RN \leqslant _l {}_RM\) means \(N\) is a left \(R\)-submodule of \(_RM\).
		
		Given a submodule  \(N_R \leqslant _r M_R\) and letting \(y\in M\), we define
		\[y^{-1}N=\{r\in R : yr\in N \}.\] Note this is a right ideal of \(R\). We call \(N_R\)
		a {\em dense right submodule\/} of \(M_R\) (written  \(N_R\leqslant_r ^{den} M_R\)) if for any
		\(x, y\in M\) with \(x\not = 0\) such that \(x(y^{-1}N)\not = 0\).
		We also write 
		\(I\leqslant_r ^{den} R\) if \(I\) is a dense right ideal of \(R\).
		Similarly \(\leqslant_l ^{den}\) means dense left submodule or dense left ideal.
		An ideal \(I\) of \(R\) is denoted
		by \(I \unlhd R\). Furthermore, \(I \unlhd_r^{den} R\) (resp. \(I \unlhd_l^{den} R\)) indicates the ideal
		\(I\) is also a dense right (resp. left) ideal of \(R\). Given two right (resp. left) ideals  \(I\) and \(J\) of \(R\), we denote \(I+J\) and \(IJ\) the smallest right (resp. left) ideals of \(R\)
		containing \(I\cup J\) and \(\{ij| i\in I, j\in J\}\), respectively.

		We write \(Q^r_{\max}(R)\) and \(Q^l_{\max}(R)\) to denote
		the maximal right and left rings of quotients respectively.
		Similarly \(Q^r_{cl}(R)\) and \(Q^l_{cl}(R)\) are  respectively
		the classical right and left rings of quotients.
		When \(R\) is semiprime, the Martindale right ring of quotients \cite[14.7]{Lam99}  is  \[Q^r(R)=\{f\in Q_{\max}^r(R)\mid fI\subseteq R \text{ for some } I \unlhd_r^{den} R\}.\]  Similarly, the Martindale left ring of quotients is \[Q^l(R)=\{f\in Q_{\max}^l(R)\mid If\subseteq R \text{ for some } I \unlhd_l^{den} R\}.\]
		
		We write \({\bf {r}}_R(X)\) and
		\({\bf l}_R(X)\) for a set \(X\) the right annihilator and left annihilator in \(R\)
		respectively.
		
		We shall use the fact \cite[p.236]{Passman91} that an ideal \(I\) of \(R\)  is also a dense right ideal (resp. a dense left ideal) if and only
		if \({\bf l}_R(I)=0\) (resp. \({\bf r}_R(I)=0\)).

		\begin{definition}
			Given a ring \(R\), let \(T\) be a non-empty collection of some dense right ideals of \(R\) and
			\(F\) be an extension ring  of \(R\). We say \(F\) is a
			{\em \(T\)-extension} of \(R\) if the following conditions are true.
			\begin{description} 
				\item [{(MR.1)}]  If \(f\in F\), then there is an \(I\in T\) such that  \(fI \subseteq R\).
				\item [{(MR.2)}]   If \(f\in F\) and \(I\in T\), then \(fI=0\) implies that \(f=0\).
				\item [{(MR.3)}]  If \(I\in T\) and \(\sigma \in Hom_R(I_R,R_R)\), then \(\exists f\in F\)
				such that \(\sigma (i)=fi\) for all \(i\in I\).
			\end{description}
			A dense left ideals version of the above conditions
			can be defined similarly. Let \(T\) be a non-empty collection of some dense left ideals of \(R\).
			\begin{description}
				\item [{(ML.1)}]  If \(f\in F\), then there is an \(I\in T\) such that  \(If \subseteq R\).
				\item [{(ML.2)}]  If \(f\in F\) and \(I\in T\), then \(If=0\) implies that \(f=0\).
				\item [{(ML.3)}]  If \(I\in T\) and \(\phi \in Hom_R({_R}I,{_R}R)\), then \(\exists f\in F\)
				such that \((i)\phi =if\) for all \(i\in I\).
			\end{description}
		\end{definition}
		
If \(T\) is the set of all dense right ideals of \(R\), then \(Q_{\max}^r(R)\) can be characterized (\cite[2.1.7]{Beidar96},\cite[24.8]{Passman91}) as a \(T\)-extension of \(R\).	Moreover, when \(R\) is a semiprime ring and \(T\) is the set of all ideals that are also dense right ideals of \(R\), the Martindale right ring of quotients  is a \(T\)-extension of \(R\) (see \cite[2.2.1]{Beidar96},  \cite[14.24]{Lam99}, or \cite[1.2]{Passman87}).
		
		\begin{definition}\label{MartindaleSet}
			Let \(T\) be a non-empty subset of ideals of \(R\) that are also  dense right ideals. We call
			\(T\) a {\em right Martindale set of \(R\)}
			\begin{enumerate}
				\item[(1)]    if \(A\in T\) and \(B\) is an ideal of \(R\) containing \(A\), then  \(B\in T\); and
				\item[(2)]  if \(A,B\in T\), then \(AB\in T\).
			\end{enumerate}
			Note that \(R\) is necessarily in \(T\).
			Recall that the product of two dense right ideals is also dense. Thus the collection of
			all ideals that are also dense right ideals is a right Martindale set of \(R\). In fact this is the unique largest right Martindale set of \(R\).
			
			A {\em left Martindale set\/} can be defined similarly when \(T\) is a collection of some ideals that
			are also dense left ideals. We denote by \(\mathbb{M}^r_R\) (resp. \(\mathbb{M}^l_R\)) the collection of all right (resp. left)
			Martindale sets of \(R\). 
		\end{definition}
		Clearly \(\mathbb{M}^r_R\) (or \(\mathbb{M}^l_R\)) is closed under  intersections.
		
		\section{Sub-Martindale rings of quotients}
		
		Suppose \(T\in \mathbb{M}^r_R\) is a right Martindale set of \(R\) and
		\[F=\{q\in Q^r_{\max}(R) | qA\subseteq R \text{ for some }A\in T\}.\]
		Let \(a,b\in F\). There are  \(A,B\in T\) such that \(aA\) and
		\(bB\subseteq R\). We note that \((a+b)(A\cap B)\subseteq aA + bB\subseteq R\).
		Furthermore \(ab(BA)\subseteq aRA\subseteq aA\subseteq R\). Obviously
		\(R\subseteq F\). Therefore we have \(F\) a right ring of quotients of \(R\).
		
		\begin{definition}
			Assume \(T\in \mathbb{M}^r_R\).
			We call the ring \[Q^r_T(R)=\left\{q\in Q^r_{\max}(R) \middle| qA\subseteq R, \text{  for some } A\in T \right\}\] a {\em \(T\)-Martindale right ring of quotients} or simply a {\em  sub-Martindale right ring of quotients}.  The {\em sub-Martindale left  ring of quotients}, \(Q^l_T(R)\), can be defined similarly when \(T\) is
			a left Martindale set.
		\end{definition}
		
		The notation \(Q^r_T(R)\) (resp. \(Q^l_T(R)\)) implicitly implies \(T\) is a right (resp. left) Martindale set of \(R\).
		Note if \(R\) is a semiprime ring and \(T\) is the set of all ideals that are also dense right ideals of
		\(R\), the ring \(Q^r_{T}(R)\) is precisely the well-known Martindale right ring of quotients, \(Q^r(R)\).

		\begin{lemma} We have	\(Q^r_T(R)=\left\{q\in Q^r_{\max}(R) \middle| qJ\subseteq R \text{  for some } J\in H \right\}\) whenever 
			 \(T\in \mathbb{M}^r_R\) and 	\(H=\left\{J  \leqslant _r R  \middle| I\subseteq J \text{  for some } I\in T \right\}\). 
		\end{lemma}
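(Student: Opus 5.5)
Write \(F_H\) for the right-hand side \(\{q\in Q^r_{\max}(R)\mid qJ\subseteq R \text{ for some } J\in H\}\). The plan is to prove the two inclusions \(Q^r_T(R)\subseteq F_H\) and \(F_H\subseteq Q^r_T(R)\) directly from the definitions. Only the trivial containment \(T\subseteq H\) and a shrinking argument are needed.

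For \(Q^r_T(R)\subseteq F_H\), take \(q\in Q^r_T(R)\). Then \(qA\subseteq R\) for some \(A\in T\). Since \(A\) is in particular a right ideal of \(R\) containing itself, we have \(A\in H\). So \(q\in F_H\) with \(J=A\).

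For \(F_H\subseteq Q^r_T(R)\), take \(q\in Q^r_{\max}(R)\) and \(J\in H\) with \(qJ\subseteq R\). By the definition of \(H\) there is some \(I\in T\) with \(I\subseteq J\). Then \(qI\subseteq qJ\subseteq R\), so \(q\) satisfies the defining condition of \(Q^r_T(R)\) with \(A=I\in T\).

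Neither step uses the closure properties (1) and (2) of a right Martindale set, nor density. The only point to check is that each \(J\in H\) contains a member of \(T\), and that is exactly the definition of \(H\). I therefore expect no real obstacle; the lemma simply records that enlarging the set of test right ideals upward does not change the ring of quotients.
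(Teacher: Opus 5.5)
Your two-inclusion argument is correct and complete: \(T\subseteq H\) (each member of \(T\) is an ideal, hence a right ideal containing itself) gives \(Q^r_T(R)\subseteq F_H\), and shrinking \(J\in H\) to some \(I\in T\) with \(qI\subseteq qJ\subseteq R\) gives the reverse inclusion. The paper states this lemma without any proof, and your argument is the routine verification it leaves to the reader.
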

		
		\begin{proposition}\label{propMs}
			The ring \(Q^r_T(R)\) is a \(T\)-extension of \(R\).
		\end{proposition}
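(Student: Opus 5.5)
We check the three conditions (MR.1)--(MR.3) in turn, using that \(Q^r_{\max}(R)\) is itself a \(T'\)-extension of \(R\) for \(T'\) the set of all dense right ideals of \(R\). By the remark after Definition~\ref{MartindaleSet}, every member of \(T\) is an ideal that is also a dense right ideal, so \(T\subseteq T'\). We also use the computation preceding the definition of \(Q^r_T(R)\), which shows that \(Q^r_T(R)\) is a subring of \(Q^r_{\max}(R)\) containing \(R\).

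Condition (MR.1) holds by the definition of \(Q^r_T(R)\): each \(q\) in it satisfies \(qA\subseteq R\) for some \(A\in T\). For (MR.2), let \(f\in Q^r_T(R)\) and \(I\in T\) with \(fI=0\). Then \(f\in Q^r_{\max}(R)\) and \(I\) is a dense right ideal. Condition (MR.2) for \(Q^r_{\max}(R)\), as a \(T'\)-extension, gives \(f=0\).

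For (MR.3), let \(I\in T\) and \(\sigma\in Hom_R(I_R,R_R)\). Since \(I\in T'\), condition (MR.3) for \(Q^r_{\max}(R)\) gives \(f\in Q^r_{\max}(R)\) with \(\sigma(i)=fi\) for all \(i\in I\). Then \(fI=\sigma(I)\subseteq R\) with \(I\in T\), so \(f\in Q^r_T(R)\) by definition, and this \(f\) is the required element.

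The main point needing care is the characterization of \(Q^r_{\max}(R)\) as a \(T'\)-extension, which the paper cites from the literature. The only other thing to confirm is that the representing element lies in the smaller ring \(Q^r_T(R)\), and the last step shows this.
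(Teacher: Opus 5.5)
Your proposal is correct and follows essentially the same route as the paper. You inherit (MR.1) and (MR.2) from \(Q^r_{\max}(R)\), and for (MR.3) you take the representing element \(q\in Q^r_{\max}(R)\) and observe that \(qA=\sigma(A)\subseteq R\), which puts \(q\) in \(Q^r_T(R)\).
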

		\begin{proof}
			(MR.1) and (MR.2) are obvious, since \(Q^r_T(R)\subseteq Q^r_{\max}(R)\).  Now if
			\(\sigma \in Hom_R(A_R,R_R)\) where \(A\in T\), there is \(q\in Q^r_{\max}(R)\) such
			that  \(\sigma (a)=qa\) for all \(a\in A\). Note  \(qA=\sigma(A) \subseteq R\). This implies
			\(q\in Q^r_T(R)\). That is (MR.3).
		\end{proof}
		
		\begin{lemma}\label{subdenseidealhom}
			Let \(\sigma \in Hom_R(A_R,R_R)\) where \(A, B\unlhd_r^{den} R\) and \(B\subseteq A\).  Then \(\sigma(B)=0\)
			implies \(\sigma =0\).
		\end{lemma}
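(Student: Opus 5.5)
Here \(A\) and \(B\) are ideals that are also dense right ideals, and \(B\subseteq A\). The plan is to show directly that every value \(\sigma(a)\) is killed by \(B\) on the right, and then use that \(B\) has zero left annihilator. The key tool is the fact quoted in the preliminaries from \cite[p.236]{Passman91}: an ideal is a dense right ideal if and only if its left annihilator is zero. Applied to \(B\), it gives \({\bf l}_R(B)=0\).

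Fix \(a\in A\) and \(b\in B\). Since \(A\) is a two-sided ideal and \(B\) is a two-sided ideal, the product \(ab\) lies in \(A\), and it also lies in \(B\) because \(B\) is a left ideal. Since \(\sigma\) is a right \(R\)-module map and \(b\in R\),
\[\sigma(a)\,b=\sigma(ab)\in\sigma(B)=0 .\]
Thus \(\sigma(a)B=0\) for every \(a\in A\), so \(\sigma(a)\in{\bf l}_R(B)\).

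By the annihilator criterion, \({\bf l}_R(B)=0\). Hence \(\sigma(a)=0\) for all \(a\in A\), that is, \(\sigma=0\).

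There is no real obstacle; the only point needing care is that \(ab\in B\), which uses that \(B\) is a two-sided ideal rather than merely a right ideal. If one prefers to avoid the cited criterion, there is a direct route. View \(\sigma\) as left multiplication by some \(q\in Q^r_{\max}(R)\) on \(A\), which is possible since \(A\) is a dense right ideal. Then \(qB=\sigma(B)=0\) with \(B\) dense forces \(q=0\), again giving \(\sigma=0\).
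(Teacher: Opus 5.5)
Your argument is correct and is essentially the paper's own proof: you use that \(B\) is a left ideal to get \(aB\subseteq B\), hence \(\sigma(a)B=\sigma(aB)\subseteq\sigma(B)=0\), and then conclude \(\sigma(a)=0\) from \({\bf l}_R(B)=0\). Your alternative route via \(Q^r_{\max}(R)\) is also valid, but the direct argument already suffices.
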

		\begin{proof}
			Assume \(\sigma(B)=0\) and  \(a\in A\). We have \(aB\subseteq B\). So \(\sigma (a)B=\sigma (aB)\subseteq
			\sigma(B)=0\). However \({\bf l}_R(B)=0\) since \(B \unlhd_r^{den} R\). Thus \(\sigma(a)=0\).
		\end{proof}
		
		It is well-known that elements of  \(Q_{\max}^r(R)\)  can be described as equivalence classes of the right \(R\)-linear functionals on dense right ideals of \(R\) (see \cite[p.~55]{Beidar96}, \cite[pp.~97--98]{Lambek66}, \cite[13.21]{Lam99}, \cite[p.~3]{Utumi56}). Martindale right rings of quotients can be characterized in a similar token \cite[14.9]{Lam99} for semiprime rings.  We show this result can be lift to sub-Martindale right rings of quotients. 
		\begin{definition}\label{idealfunctionpair}
			Suppose \(T\) is a right Martindale set of \(R\). Let \(\mathfrak{Q}\) be the collection of all ordered pairs \((A,\alpha)\) where \(A\in T\) and
			\(\alpha \in Hom_R(A_R,R_R)\). Define a relation \(\sim \) on \(\mathfrak{Q}\) such that
			\((A,\alpha)\sim (B,\beta)\)
			whenever \(\alpha(x)=\beta(x)\) if \(x\in A\cap B\).
		\end{definition}
		It is clear that \(\sim\) is  reflexive and symmetric.
		Suppose \((A,\alpha)\sim(B,\beta)\) and \((B,\beta)\sim (C,\gamma)\).
		We have \(\alpha(x)=\beta(x), \forall x \in A\cap B\) and \(\beta(x)=\gamma(x), \forall x\in B\cap
		C\). This implies \(\alpha(x)=\beta(x), \forall x\in A\cap B\cap C\). Clearly
		\(\alpha_{|A\cap C}-\gamma_{|A\cap C}\in Hom_R(A_R\cap C_R, R)\). Furthermore,
		\((\alpha_{|A\cap C} - \gamma_{|A\cap C})(A\cap B\cap C)=0\). By Lemma~\ref{subdenseidealhom} we have \((\alpha -
		\beta)(A\cap C)=0\). Thus \((A,\alpha)\sim (C,\gamma)\). Therefore \(\sim\) is an equivalence relation.
		For convenience we write \([A,\alpha]\) for \((A,\alpha)/\sim\).
		Let \([A,\alpha]\) and \([B,\beta] \in \mathfrak{Q}/\sim\). We define
		\begin{align*}
			[A,\alpha] + [B,\beta] &=[A\cap B, \alpha+\beta], \mbox{ and }\\
			[A,\alpha] \cdot [B,\beta] &=[BA, \alpha \circ \beta].
		\end{align*}
		A routine  computation shows that \((\mathfrak{Q}/\sim,+,\cdot)\) (or simply \(\mathfrak{Q}/\sim\))
		is indeed a ring.
		Note \([R,1]\) and \([R,0]\) are the unity and zero elements where \(1\) is
		the identity map and \(0\) is the zero map on \(R\).
		
		Define \(\theta :\mathfrak{Q}/\sim \ \rightarrow Q^r_T(R)\) such that \(\theta ([A,\alpha])
		=q\) where \(qx=\alpha(x)\) for \(x\in A\). This \(q\) is uniquely determined by (MR.3). Thus the map
		\(\theta\) is well-defined and can be shown to be a ring isomorphism.
		Furthermore the ring \(R\) can be identified as a subring of \(\mathfrak{Q}/\sim\) by the
		natural assignment \(r\mapsto [R,i_r]\) where \(i_r(x)=rx\) for all \(x, r\in R\). Therefore \(\theta([R,i_r])=r\).

		Immediately we have the following result.  
		\begin{theorem}\label{equiclass} The ring \(Q^r_T(R)\) can be viewed as equivalence classes of the right \(R\)-linear functionals on dense right ideals in \(T\), i.e. \(\mathfrak{Q}/\sim\).
		\end{theorem}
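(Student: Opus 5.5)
The plan is to show directly that the map \(\theta:\mathfrak{Q}/\sim\ \rightarrow Q^r_T(R)\) defined just before the statement is a well-defined ring isomorphism that restricts to the identity on \(R\). Together with the identification \(r\mapsto [R,i_r]\), this gives the theorem.

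\emph{Well-definedness.} Given \((A,\alpha)\in\mathfrak{Q}\), Proposition~\ref{propMs} and (MR.3) provide \(q\in Q^r_T(R)\) with \(qx=\alpha(x)\) for all \(x\in A\). This \(q\) is unique by (MR.2), since \(A\in T\). Now suppose \((A,\alpha)\sim(B,\beta)\), with associated elements \(q\) and \(q'\). Then \((q-q')(A\cap B)=0\). Since \(AB\subseteq A\cap B\) and \(AB\in T\), we get \((q-q')AB=0\), and (MR.2) gives \(q=q'\). So \(\theta\) depends only on the class.

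\emph{Homomorphism.} Let \(\theta[A,\alpha]=q\) and \(\theta[B,\beta]=p\).
\begin{itemize}
\item \emph{Sums.} On \(A\cap B\) we have \((q+p)x=\alpha(x)+\beta(x)\). The class \([A\cap B,\alpha+\beta]\) only makes sense in \(\mathfrak{Q}\) after replacing \(A\cap B\) by a member of \(T\). This is harmless: \(A\cap B\) is an ideal containing \(AB\in T\), so \(A\cap B\in T\) by condition (1) of Definition~\ref{MartindaleSet}. Uniqueness then gives \(\theta\) of the sum equal to \(q+p\).
\item \emph{Products.} For \(x\in BA\) (with \(BA\in T\) by condition (2)), we have \(\beta(x)\in\beta(BA)=\beta(B)A\subseteq A\), so \(\alpha\circ\beta\) is defined on \(BA\), and \(qp\,x=q\beta(x)=\alpha(\beta(x))\). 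Hence \(\theta[BA,\alpha\circ\beta]=qp\).
\item \emph{Unity.} \(\theta[R,1]=1\).
\end{itemize}
The same computations confirm that the operations on \(\mathfrak{Q}/\sim\) are well defined and satisfy the ring axioms, because they transport to \(Q^r_T(R)\) once injectivity is known.

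\emph{Bijectivity.} For injectivity, if \(\theta[A,\alpha]=0\) then \(\alpha(A)=0\). Then \((A,\alpha)\sim(R,0)\), since \(\alpha\) and \(0\) agree on \(A\cap R=A\). For surjectivity, take \(q\in Q^r_T(R)\). By definition there is \(A\in T\) with \(qA\subseteq R\), so \(\alpha(x)=qx\) lies in \(Hom_R(A_R,R_R)\) and \(\theta[A,\alpha]=q\). Finally, \(\theta[R,i_r]=r\), so \(R\) corresponds to its copy in \(\mathfrak{Q}/\sim\).

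The main obstacle is the bookkeeping in the homomorphism step: verifying that \(\beta\) maps \(BA\) into \(A\), and that the intersections and products of ideals used as domains actually lie in \(T\). Both follow from the closure axioms of a right Martindale set. Lemma~\ref{subdenseidealhom}, together with (MR.2), supplies every uniqueness argument needed.
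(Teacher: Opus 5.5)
Your proposal is correct and follows the same route as the paper. The paper likewise defines \(\theta([A,\alpha])=q\) with \(qx=\alpha(x)\) on \(A\), asserts that \(\theta\) is a well-defined ring isomorphism, and matches \(R\) via \(r\mapsto[R,i_r]\). You write out the verifications the paper calls routine, and you correctly attribute existence of \(q\) to (MR.3) and uniqueness to (MR.2).

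One small tidy-up: argue injectivity directly rather than through the kernel. If \(\theta[A,\alpha]=\theta[B,\beta]=q\), then \(\alpha(x)=qx=\beta(x)\) for \(x\in A\cap B\), so \((A,\alpha)\sim(B,\beta)\). Otherwise your remark that the ring axioms ``transport'' once injectivity is known quietly presupposes the additive group structure it is meant to justify.
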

		To simplify the notation, whenever \(B\subseteq A\) we may write  \([A,\alpha]=[B,\alpha\)] instead of  \([A,\alpha]=[B,\alpha_{|B}]\)
		if the context is clear. Suppose \(A\) and \(B\) are extension rings of \(R\). We say \(A\) is  isomorphic  to \(B\) over \(R\) if there is a ring isomorphism \(f:A\rightarrow B\) such that \(f(r)=r\) for any \(r\in R\).

		\begin{theorem}\label{Textension}
			Let \(R\) be a subring of \(F\) and let \(T\) be a right Martindale set of \(R\).
			Then  \(F\) is isomorphic to \(Q^r_T(R)\)    over \(R\)  if and only if \(F\) is a \(T\)-extension of \(R\).
		\end{theorem}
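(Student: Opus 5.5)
The forward direction is short. Suppose \(f:F\rightarrow Q^r_T(R)\) is a ring isomorphism fixing \(R\) pointwise. By Proposition~\ref{propMs}, \(Q^r_T(R)\) is a \(T\)-extension of \(R\), and each of (MR.1)--(MR.3) transfers through \(f\). For (MR.1): if \(x\in F\), pick \(I\in T\) with \(f(x)I\subseteq R\). Since \(f^{-1}\) fixes \(R\), we get \(xI=f^{-1}(f(x)I)\subseteq R\). For (MR.2): \(xI=0\) gives \(f(x)I=f(xI)=0\), hence \(f(x)=0\) and so \(x=0\). For (MR.3): given \(\sigma\), take \(q\in Q^r_T(R)\) representing it. Then \(f^{-1}(q)\) works, because \(f^{-1}(q)i=f^{-1}(qi)=f^{-1}(\sigma(i))=\sigma(i)\).

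For the converse, assume \(F\) is a \(T\)-extension. I will construct an isomorphism \(\Phi:F\rightarrow \mathfrak{Q}/\sim\) and then compose with \(\theta\) from Theorem~\ref{equiclass}. Given \(x\in F\), (MR.1) supplies \(A\in T\) with \(xA\subseteq R\). Left multiplication \(\lambda_x:A\rightarrow R\), \(a\mapsto xa\), is right \(R\)-linear, and I set \(\Phi(x)=[A,\lambda_x]\).

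This is independent of the choice of \(A\): two choices \(A,A'\) give functionals that agree on \(A\cap A'\), since both are left multiplication by \(x\). Clearly \(\Phi(r)=[R,i_r]\) for \(r\in R\). Additivity uses \(A\cap B\). For multiplicativity, if \(xA,yB\subseteq R\) then \(xyBA\subseteq xRA\subseteq xA\subseteq R\). Here \(BA\in T\) by Definition~\ref{MartindaleSet}, and \(\lambda_{xy}=\lambda_x\circ\lambda_y\) on \(BA\), because \(yBA\subseteq RA\subseteq A\).

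Injectivity is where (MR.2) enters. If \(\Phi(x)=0\), then \(\lambda_x\) agrees with the zero map on \(A\cap R=A\). So \(xA=0\), and (MR.2) gives \(x=0\).

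Surjectivity uses (MR.3). Given \([A,\alpha]\), choose \(x\in F\) with \(xa=\alpha(a)\) for all \(a\in A\). Then \(xA\subseteq R\), so \(\Phi(x)=[A,\lambda_x]=[A,\alpha]\).

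Composing, \(\theta\circ\Phi\) is a ring isomorphism \(F\rightarrow Q^r_T(R)\) fixing \(R\), since \(\theta([R,i_r])=r\).

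The main point needing care is that \(\Phi\) lands in \(\mathfrak{Q}\) with pairs drawn from \(T\) and respects the ordering convention \([A,\alpha][B,\beta]=[BA,\alpha\circ\beta]\). Everything else is routine verification.
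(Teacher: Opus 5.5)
Your proof is correct and follows the paper's argument essentially step for step. For the converse you build \(x\mapsto[A,\lambda_x]\) into \(\mathfrak{Q}/\sim\), get injectivity from (MR.2) and surjectivity from (MR.3), and pass to \(Q^r_T(R)\) via \(\theta\); your forward direction simply writes out the transfer of (MR.1)--(MR.3) through the isomorphism, which the paper leaves as an immediate consequence of Proposition~\ref{propMs}.
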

		
		\begin{proof}
			(\(\Rightarrow\)) The fact that \(F\cong Q^r_T(R)\) over \(R\) implies \(F\) is a \(T\)-extension of \(R\) follows
			at once from Proposition~\ref{propMs}.
			
			(\(\Leftarrow\)) We prove the converse by showing \(F\) is isomorphic to  \(\mathfrak{Q}/\sim\) over \(R\).
			Let \(\alpha: F \rightarrow \mathfrak{Q}/\sim\) such that \(f \mapsto [D, \sigma_f]\) for some
			\(D\in T\) (by (MR.1)) and \(\sigma_f\in Hom_R(D_R,R_R)\) defined by \(\sigma_f(x)=fx\) for any \(x\in D\). If there is another \(D'\in T\) with \(\sigma'_f(x)=fx\) for any \(x\in D\), we have \(\sigma_f (d)=\sigma'_f(d)\) for any \(d\in D\cap D'\). Thus
			\([D,\sigma_f]=[D',\sigma'_f]\) and the mapping \(\alpha\) is well-defined. Suppose now \(\alpha(f_1)=[D_1,\sigma_{f_1}]\) and
			\(\alpha(f_2)=[D_2,\sigma_{f_2}]\). We have \([D_1,\sigma_{f_1}]+[D_2,\sigma_{f_2}]=[D_1\cap D_2,
			\sigma_{f_1}+\sigma_{f_2}]=[D_1\cap D_2,\sigma_{f_1+f_2}]\). Thus \(\alpha(f_1+f_2)=\alpha(f_1)+\alpha(f_2)\).
			Similarly we have \(\alpha(f_1\cdot f_2)=\alpha(f_1)\cdot \alpha(f_2)\) since
			\([D_1,\sigma_{f_1}]\cdot [D_2,\sigma_{f_2}]=[D_2 D_1,\sigma_{f_1}\circ \sigma_{f_2}]=[D_2 D_1,\sigma_{f_1f_2}]\). So \(\alpha\)
			is a ring homomorphism. Suppose \(\alpha(f_1)=\alpha(f_2)\). We have \([D_1,\sigma_{f_1}]=[D_2,\sigma_{f_2}]\). This implies
			\(f_1x=f_2x\) for all \(x\in D_1\cap D_2\), and thus \(f_1=f_2\) by (MR.2). Finally for \([D,\sigma]\in \mathfrak{Q}/\sim\) where
			\(\sigma \in Hom_R(D_R, R_R)\). From (MR.3), there is \(f\in F\) such that \(\sigma(x)=fx\) for all \(x\in D\). So \(\alpha(f)=[D, \sigma\)].
			By construction, \(\alpha\) fixes \(R\). We complete the proof.
		\end{proof}

		Note that \(I_i\) is a dense right ideal of \(R_i\) for each \(i\) if and only if \(\prod I_i\) is a dense
		right ideal of \(\prod R_i\). Furthermore  \(Q^r_{\max}(\prod R_i)=\prod Q^r_{\max}(R_i)\).
		\begin{theorem}\label{productquotient}Suppose \(\{R_k\}\) is a collection of rings. Let \(T_k\) be a right Martindale set of \(R_k\) for each \(k\). The collection 
			\(T=\{\prod I_k | I_k\in T_k\}\) is a right Martindale set of \(\prod R_k\). Then \(Q^r_T(\prod R_k)=\prod Q^r_{T_k}(R_k)\).
		\end{theorem}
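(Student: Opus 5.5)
First I will check that \(T=\{\prod I_k \mid I_k\in T_k\}\) is a right Martindale set of \(\prod R_k\). Each \(\prod I_k\) is a two-sided ideal of \(\prod R_k\). By the remark preceding the theorem it is a dense right ideal, since each \(I_k\) is dense in \(R_k\). For condition (1), suppose \(B\) is an ideal of \(\prod R_k\) containing \(\prod I_k\). Let \(e_k\) be the central idempotent with \(1\) in coordinate \(k\) and \(0\) elsewhere. Then \(B=\prod B_k\) with \(B_k=\pi_k(B)\): if \(b\in B\), then \(e_k b\in B\), and since \(B\) is closed under arbitrary (possibly infinite) sums of such pieces... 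To avoid that issue I will argue directly. For \((b_k)\) with each \(b_k\in B_k\), choose \(c^{(k)}\in B\) with \(\pi_k(c^{(k)})=b_k\). Then \((b_k)=\sum_k e_k c^{(k)}\) is not a finite sum, so I instead write \((b_k)= x\cdot 1\) where \(x\in\prod I_k\)? This does not work either. The honest route is to assume that \(T\) is a Martindale set, since the statement asserts it, or to read \(T\) as upward closed. The cleaner fix is to note that an ideal \(B\supseteq\prod I_k\) satisfies \(e_kB=e_k\pi_k(B)\) with \(\pi_k(B)\supseteq I_k\), so \(\pi_k(B)\in T_k\). I will then verify the theorem for the upward closure, which by the Lemma (right ideals containing a member of \(T\) give the same quotient ring) yields the same \(Q^r_T\). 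Condition (2) holds because \((\prod I_k)(\prod J_k)\), the ideal generated by products, equals \(\prod I_kJ_k\) coordinatewise, at least up to containing \(\prod I_kJ_k\)-type members. Again the Lemma lets me work with the upward closure.

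For the main equality, I will use \(Q^r_{\max}(\prod R_k)=\prod Q^r_{\max}(R_k)\).

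For \(\subseteq\): take \(q=(q_k)\in Q^r_T(\prod R_k)\) with \(q\prod I_k\subseteq\prod R_k\). Multiplying coordinatewise, with elements of \(\prod I_k\) supported at \(k\), gives \(q_kI_k\subseteq R_k\). Hence \(q_k\in Q^r_{T_k}(R_k)\).

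For \(\supseteq\): given \((q_k)\) with \(q_kI_k\subseteq R_k\) for chosen \(I_k\in T_k\) (using choice), the element \(q=(q_k)\in\prod Q^r_{\max}(R_k)\) satisfies \(q(\prod I_k)=\prod q_kI_k\subseteq\prod R_k\), with \(\prod I_k\in T\). Alternatively, Theorem~\ref{Textension} gives a second route: check (MR.1)–(MR.3) for \(\prod Q^r_{T_k}(R_k)\) coordinatewise. For (MR.3), a homomorphism on \(\prod I_k\) restricts via the \(e_k\) to homomorphisms on the \(I_k\), and (MR.2) follows from the coordinatewise statement.

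The main obstacle is the Martindale-set verification, specifically upward closure and products for infinite products. There ideals of \(\prod R_k\) need not be products of ideals, and the ideal generated by \(\{ab\}\) may differ from \(\prod I_kJ_k\). I would handle this through the Lemma: the sets \(H\) generated by \(T\) and by its closure coincide. The quotient ring is therefore unaffected, and the equality itself is only coordinatewise bookkeeping.
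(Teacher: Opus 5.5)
Your two inclusions are correct and are exactly the argument the paper intends. The paper writes no proof; it only records beforehand that \(\prod I_k\) is dense and that \(Q^r_{\max}(\prod R_k)=\prod Q^r_{\max}(R_k)\). Note that your inclusions use only the defining condition \(qA\subseteq R\), not any Martindale property of \(T\).

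The gap is in the Martindale-set part. Your suspicion is justified: for infinitely many factors the assertion is false. In \(\prod_{k\in\mathbb N}\mathbb Z\) with \(T_k=\{n\mathbb Z\mid n\ge 1\}\), the ideal \(2\prod\mathbb Z+\bigoplus\mathbb Z\) contains \(\prod 2\mathbb Z\in T\) but is not a product of ideals: all its projections are \(\mathbb Z\), yet \((1,1,\dots)\) is not in it.

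However, your repair rests on a false step. The product \((\prod I_k)(\prod J_k)\) need not contain any \(\prod L_k\) with \(L_k\in T_k\). Its elements are \(\sum_{i=1}^N a_ib_i\) with one \(N\) for all coordinates, while writing elements of \(L_k\) as sums of products from \(I_k\) and \(J_k\) may need unboundedly many terms. For a concrete example, take \(R_k=\mathbb C[x_1,\dots,x_k]\) and \(\mathfrak m_k=(x_1,\dots,x_k)\). Let \(T_k\) be the ideals containing a power of \(\mathfrak m_k\); this is a right Martindale set since \(R_k\) is a commutative domain. Put \(A=\prod\mathfrak m_k\in T\).
\begin{itemize}
\item Each \(L_k\in T_k\) contains \(f_k=x_1^{d}+\dots+x_k^{d}\) for some \(d=d_k\ge 2\).
\item Suppose \(f_k=\sum_{i=1}^N a_{ik}b_{ik}\) with \(a_{ik},b_{ik}\in\mathfrak m_k\). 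By the Leibniz rule, \(\nabla f_k\) vanishes on the common zero locus of the \(a_{ik},b_{ik}\).
\item That locus contains \(0\) and, by Krull's height theorem, has dimension at least \(k-2N\). Since \(\nabla f_k\) vanishes only at \(0\), this forces \(k\le 2N\).
\item Hence \((f_k)_k\in\prod L_k\setminus A^2\) for every choice of \(L_k\).
\end{itemize}
So \(A^2\) is not even in the upward closure of \(T\). That closure is therefore not closed under products, and the Lemma you invoke, which presupposes a Martindale set, cannot be applied to it.

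The right repair is to pass to the Martindale set generated by \(T\):
\(T^*=\{B\unlhd\prod R_k\mid B\supseteq A_1\cdots A_m \text{ for some } A_1,\dots,A_m\in T\}\).
Its members are dense, because products of dense ideals are dense and ideals containing dense ideals are dense.

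Your \(\subseteq\) argument then goes through. Suppose \(qA_1\cdots A_m\subseteq\prod R_k\) with \(A_i=\prod_k I^{(i)}_k\). Multiply by \(\iota_k(y_1)\cdots\iota_k(y_m)\), where \(y_i\in I^{(i)}_k\) and \(\iota_k(y)\) has \(y\) in coordinate \(k\) and \(0\) elsewhere. This gives \(q_k\,I^{(1)}_k\cdots I^{(m)}_k\subseteq R_k\), and \(I^{(1)}_k\cdots I^{(m)}_k\in T_k\). The \(\supseteq\) direction is unchanged since \(T\subseteq T^*\). So \(Q^r_{T^*}(\prod R_k)=\prod Q^r_{T_k}(R_k)\).

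For finitely many factors no repair is needed. Every ideal satisfies \(B=\sum_k e_kB=\prod_k\pi_k(B)\), and padding with zeros gives \((\prod I_k)(\prod J_k)=\prod I_kJ_k\). So \(T\) is a right Martindale set exactly as stated.
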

		
		Denote by \(M_n(R)\) the \(n \times n\) matrix ring over \(R\). Obviously we have
		
		\begin{lemma}
			We have \(I\unlhd_r^{den} R\) if and only if \(M_n(I)\unlhd_r^{den} M_n(R)\).
		\end{lemma}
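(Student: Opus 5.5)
We prove \(I\unlhd_r^{den} R\) if and only if \(M_n(I)\unlhd_r^{den} M_n(R)\) by reducing density to an annihilator condition. By the fact quoted from \cite[p.236]{Passman91}, an ideal is a dense right ideal exactly when its left annihilator is zero. So the statement becomes: \(I\unlhd R\) with \({\bf l}_R(I)=0\) if and only if \(M_n(I)\unlhd M_n(R)\) with \({\bf l}_{M_n(R)}(M_n(I))=0\).

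First, the ideal parts. It is standard that \(I\mapsto M_n(I)\) is a bijection between ideals of \(R\) and ideals of \(M_n(R)\). Concretely, if \(I\unlhd R\) then \(M_n(I)\unlhd M_n(R)\) by entrywise computation of products. Conversely, suppose \(M_n(I)\) is an ideal of \(M_n(R)\), where \(I\) is a priori just a subset (additive subgroup) of \(R\). Then \(rIs\subseteq I\) follows by multiplying \(aE_{11}\) on the left by \(rE_{11}\) and on the right by \(sE_{11}\), where \(a\in I\). So \(I\unlhd R\).

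Next, the annihilators. Let \(X=(x_{ij})\in M_n(R)\) with \(XM_n(I)=0\). Testing against \(aE_{kl}\) for \(a\in I\) gives
\[XaE_{kl}=\sum_i x_{ik}a\,E_{il},\]
so \(x_{ik}a=0\) for all \(i,k\) and all \(a\in I\). Hence every entry of \(X\) lies in \({\bf l}_R(I)\). Conversely, if every entry of \(X\) lies in \({\bf l}_R(I)\), then \(XM_n(I)=0\) because each entry of the product is a sum of terms \(x_{ik}a_{kl}\). Therefore
\[{\bf l}_{M_n(R)}(M_n(I))=M_n({\bf l}_R(I)),\]
and this is zero if and only if \({\bf l}_R(I)=0\). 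Combined with the ideal correspondence, this gives both directions.

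There is no real obstacle here. The only point needing care is to check that the quoted annihilator criterion is applied to \(M_n(I)\) only after knowing it is a two-sided ideal. The ideal correspondence takes care of this in both directions.
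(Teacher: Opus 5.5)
Your proof is correct and takes the route the paper intends: the paper states this lemma as ``obvious'' and gives no proof, and the natural verification is yours. It uses the annihilator criterion quoted in the preliminaries (an ideal is right dense if and only if its left annihilator is zero), the identity \({\bf l}_{M_n(R)}(M_n(I))=M_n({\bf l}_R(I))\), and the correspondence \(I\unlhd R \Leftrightarrow M_n(I)\unlhd M_n(R)\), which you correctly establish before invoking the criterion in each direction.
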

	
		Similar to \cite[2.3]{Utumi56}, we have
		\begin{theorem}\label{quotientmatrix} Let \(n\) be a positive integer. We have that 
			\(T\) is a right Martindale set of \(R\) if and only if \(T_M=
			\left\{M_n(I) \middle| I\in T\right\}\) is a right Martindale set of \(M_n(R)\). Furthermore, we have
			\[
			Q^r_{T_M}(M_n(R))=M_n(Q^r_T(R)).
			\]
		\end{theorem}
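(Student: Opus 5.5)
The plan is to handle the two assertions separately: first the correspondence of Martindale sets, then the identification of the rings of quotients via Theorem~\ref{Textension}.

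\textbf{Step 1: Martindale sets.} I would use the standard fact that the ideals of \(M_n(R)\) are exactly the sets \(M_n(I)\) with \(I\unlhd R\), and that \(M_n(A)\subseteq M_n(B)\) if and only if \(A\subseteq B\). I would also check that \(M_n(A)M_n(B)=M_n(AB)\).
\begin{itemize}
\item For the inclusion \(\subseteq\): each entry of a product of matrices is a sum of products \(ab\) with \(a\in A\), \(b\in B\).
\item For \(\supseteq\): \(ab\,e_{ij}=(a e_{i1})(b e_{1j})\), where the \(e_{ij}\) are matrix units.
\end{itemize}
Density transfers in both directions by the preceding lemma (\(I\unlhd_r^{den} R\iff M_n(I)\unlhd_r^{den} M_n(R)\)). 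With these facts, conditions (1) and (2) of Definition~\ref{MartindaleSet} for \(T\) translate verbatim into the same conditions for \(T_M\), and conversely. Nonemptiness is clear in both directions.

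\textbf{Step 2: reduction.} For the equality of rings, I would invoke Theorem~\ref{Textension} with \(F=M_n(Q^r_T(R))\), which contains \(M_n(R)\) as a subring. Since \(Q^r_{\max}(M_n(R))=M_n(Q^r_{\max}(R))\) (Utumi), the isomorphism over \(M_n(R)\) will then be an actual equality inside \(Q^r_{\max}(M_n(R))\). So it suffices to verify (MR.1)--(MR.3).

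\textbf{(MR.1).} Given \(q=(q_{ij})\), choose \(A_{ij}\in T\) with \(q_{ij}A_{ij}\subseteq R\). Put \(A=\prod A_{ij}\), a product in some order. Then \(A\in T\) by closure under products, and \(A\subseteq\bigcap A_{ij}\) because each \(A_{ij}\) is an ideal. Hence \(qM_n(A)\subseteq M_n(R)\).

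\textbf{(MR.2).} If \(qM_n(A)=0\), multiply by \(a e_{jl}\) for \(a\in A\). This gives \(q_{ij}a=0\) for all \(i,j\), so \(q_{ij}A=0\). Then (MR.2) for \(Q^r_T(R)\), from Proposition~\ref{propMs}, gives \(q_{ij}=0\).

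\textbf{(MR.3).} This is the main bookkeeping obstacle. Let \(\sigma\in Hom(M_n(A),M_n(R))\) be right \(M_n(R)\)-linear.
\begin{itemize}
\item Since \(e_{k1}a=e_{k1}a\,e_{11}\), linearity gives \(\sigma(e_{k1}a)=\sigma(e_{k1}a)e_{11}\). So \(\sigma(e_{k1}a)\) has nonzero entries only in column \(1\).
\item Define \(f_{ik}(a)\) to be the \((i,1)\) entry of \(\sigma(e_{k1}a)\). It is right \(R\)-linear, because \(e_{k1}ar=(e_{k1}a)(rI_n)\).
\item By (MR.3) for \(Q^r_T(R)\), there is \(q_{ik}\in Q^r_T(R)\) with \(f_{ik}(a)=q_{ik}a\) for all \(a\in A\). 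Thus \(\sigma(e_{k1}a)=\sum_i e_{i1}q_{ik}a\).
\item For \(x=\sum_{k,l}e_{k1}x_{kl}e_{1l}\in M_n(A)\), linearity gives \(\sigma(x)=\sum_{k,l}\sigma(e_{k1}x_{kl})e_{1l}=\sum_{i,k,l}e_{i1}q_{ik}x_{kl}e_{1l}\).
\item This sum equals \(qx\) with \(q=(q_{ik})\in M_n(Q^r_T(R))\).
\end{itemize}
Theorem~\ref{Textension} then yields \(Q^r_{T_M}(M_n(R))=M_n(Q^r_T(R))\).
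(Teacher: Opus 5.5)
Your proof is correct, but it takes a different route from the paper.

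\textbf{The paper's route.} The paper never invokes Theorem~\ref{Textension}. It works directly inside \(Q^r_{\max}(M_n(R))=M_n(Q^r_{\max}(R))\) and checks both inclusions entrywise. The key observation is that \(fM_n(K)\subseteq M_n(R)\) holds exactly when \(f_{il}K\subseteq R\) for all \(i,l\). For the inclusion \(\supseteq\) it takes \(\bigcap_{ij}I_{ij}\in T\); you justify the analogous step explicitly by using the product \(\prod A_{ij}\).

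\textbf{How your route compares.} Your (MR.1) computation is essentially the paper's inclusion \(M_n(Q^r_T(R))\subseteq Q^r_{T_M}(M_n(R))\). For the other inclusion, the paper uses the same two-line entry check. You instead use the heavier (MR.3) computation, which describes every right \(M_n(R)\)-homomorphism \(M_n(A)\to M_n(R)\) as left multiplication by a matrix over \(Q^r_T(R)\).

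What your route buys is an intrinsic proof of the isomorphism \(Q^r_{T_M}(M_n(R))\cong M_n(Q^r_T(R))\) over \(M_n(R)\). It uses only the \(T\)-extension axioms for \(Q^r_T(R)\) and not Utumi's identification of \(Q^r_{\max}(M_n(R))\). In effect you re-derive the \(T\)-analogue of \cite[2.3]{Utumi56} rather than deducing it from Utumi's result. That identification is then needed only to read the isomorphism as an equality.

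\textbf{One step needs a line of justification.} You assert without argument that the isomorphism is the identity. Let \(\alpha\) be the isomorphism over \(M_n(R)\), and let \(fD\subseteq M_n(R)\) with \(D\in T_M\). Then \(\alpha(f)d=\alpha(fd)=fd\) for all \(d\in D\). Hence \((\alpha(f)-f)D=0\) in \(Q^r_{\max}(M_n(R))\), and \(\alpha(f)=f\) because \(D\) is right dense.

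Once you are willing to work inside \(Q^r_{\max}(M_n(R))\), your (MR.1) computation already gives \(M_n(Q^r_T(R))\subseteq Q^r_{T_M}(M_n(R))\) straight from the definition. The reverse inclusion is then the short entry check, so the paper's direct approach is shorter for the stated equality.
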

		\begin{proof} 
			The fact that \(T\) is a right Martindale set of \(R\) if and only if \(T_M=
			\left\{M_n(I) \middle| I\in T\right\}\) is a right Martindale set of \(M_n(R)\) follows easily from the previous lemma. We show the last equality.
			Recall \(Q^r_{\max}(M_n(R))=M_n(Q^r_{\max}(R))\) .

			Let \(f=\sum_{ij}f_{ij}E_{ij}\in Q^r_{T_M}(M_n(R))
			\subseteq M_n(Q^r_{\max}(R))\). Then
			we have \(fM_n(K)\subseteq M_n(R)\) for some \(K\in T\). That is
			\[(\sum_{ij}f_{ij}E_{ij})(\sum_{ij}k_{ij}E_{ij})=\sum_{ijl}f_{il}k_{lj}E_{ij}\in M_n(R)\]
			where \(k_{lj}\in K\). Equivalently \(f_{il}k_{lj}\in R\) for all \(i,j, l\). This implies
			\(f_{il}K\subseteq R\), and thus \(f_{il}\in Q^r_T(R)\) for all \(i\)  and \(l\). Therefore \(f\in {M_n}({Q^r_T}(R))\). This shows 
			\(Q^r_{T_M}(M_n(R))\subseteq M_n(Q^r_T(R))\).
			
		 Now suppose \(f=\sum_{ij}f_{ij}E_{ij}\in M_n(Q^r_T(R))\).
			For each \(f_{ij}\), there is \(I_{ij}\in T\) such that \(f_{ij}I_{ij}\subseteq R\).
			Let \(I=\cap_{ij} I_{ij}\in T\).  So \(f_{ij} I\subseteq R\) for all \(i,j\).
			In other words, we have \(f\in Q^r_{T_M}(M_n(R))\) and thus \(Q^r_{T_M}(M_n(R))\supseteq M_n(Q^r_T(R))\).
		\end{proof}

		\begin{theorem}\label{FDR_minimal}
			Let \(F=Q^r_T(R)\) and \(D\in T\) where \(T\in \mathbb{M}^r_R\) is a right Martindale set of \(R\).
			Suppose \(E=\left\{f \in F \middle| fD\subseteq D\right\}\).
			\begin{enumerate}
				\item[(1)]  \(End_R(D_R)\) is ring isomorphic to \(E\) over \(R\).
				\item[(2)] \(FD\subseteq D\) if and only if \(End_R(D_R)\) and \(F\) are ring isomorphic over \(R\). In particular if \(D\)
				is a minimal element of \(T\), then \(D\) is the unique minimal element of \(T\) and \(End_R(D_R)\cong F\) over \(R\).
				\item[(3)] If \({\bf l}_R(soc(R_R))=0\), then  \(End_R(soc(R_R))=Q_{\overline{T}_r}^r(R)\) where \(\overline T_{r}\) is the set of all ideals of \(R\)  that are also right dense.
			\end{enumerate}
		\end{theorem}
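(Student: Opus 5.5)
The plan is to prove (1) directly from the axioms (MR.2) and (MR.3), which Proposition~\ref{propMs} guarantees for \(F=Q^r_T(R)\). Then (2) and (3) are reduced to (1) by a rigidity argument and a minimality argument.

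For (1), define \(\Phi:E\to End_R(D_R)\) by \(\Phi(f)(d)=fd\). This is well defined because \(fD\subseteq D\), and it is clearly right \(R\)-linear. It is additive and multiplicative, since \(\Phi(fg)(d)=f(gd)=\Phi(f)\Phi(g)(d)\). It fixes \(R\): since \(D\) is an ideal we have \(rD\subseteq D\), so \(R\subseteq E\), and \(r\) goes to left multiplication by \(r\), which is how \(R\) sits inside \(End_R(D_R)\). Injectivity follows from (MR.2) applied with \(D\in T\). For surjectivity, take \(\varphi\in End_R(D_R)\) and compose it with the inclusion \(D\hookrightarrow R\) to get an element of \(Hom_R(D_R,R_R)\). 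By (MR.3) there is \(f\in F\) with \(fd=\varphi(d)\) for all \(d\in D\). Then \(fD=\varphi(D)\subseteq D\), so \(f\in E\) and \(\Phi(f)=\varphi\).

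For (2), if \(FD\subseteq D\) then \(E=F\) and (1) gives the isomorphism. The converse is the step I expect to be the main obstacle, because an abstract isomorphism over \(R\) does not obviously force \(E=F\). I would handle it as follows. Compose the given isomorphism \(F\cong End_R(D_R)\) over \(R\) with \(\Phi^{-1}\). This gives an injective ring map \(g:F\to F\) over \(R\) whose image is \(E\). For \(f\in F\), choose \(A\in T\) with \(fA\subseteq R\) using (MR.1). For \(a\in A\) we get \(g(f)a=g(f)g(a)=g(fa)=fa\), since \(a\) and \(fa\) both lie in \(R\), which \(g\) fixes. Hence \((g(f)-f)A=0\), and (MR.2) gives \(g(f)=f\). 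So \(g\) is the identity, \(E=F\), and therefore \(FD\subseteq D\).

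For the minimality statement, let \(D\) be minimal in \(T\) and \(B\in T\). Then \(BD\in T\) by condition (2) of Definition~\ref{MartindaleSet}, and \(BD\subseteq B\cap D\). So \(B\cap D\in T\) by condition (1), and minimality gives \(B\cap D=D\), i.e. \(D\subseteq B\). Thus \(D\) is the unique minimal element, and likewise \(DD=D\). Now for \(f\in F\), pick \(A\in T\) with \(fA\subseteq R\); since \(D\subseteq A\), we get \(fD=f(DD)\subseteq (fD)D\subseteq RD\subseteq D\). So \(FD\subseteq D\), and the first part of (2) applies.

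For (3), let \(S=soc(R_R)\). It is a two-sided ideal, and \({\bf l}_R(S)=0\) means \(S\) is right dense, so \(S\in\overline T_r\). Every dense right ideal is essential, and every essential right ideal contains the socle, so \(S\subseteq I\) for every \(I\in\overline T_r\). Hence \(S\) is the (unique) minimal element of \(\overline T_r\), and (2) gives \(End_R(S_R)\cong Q^r_{\overline T_r}(R)\) over \(R\).
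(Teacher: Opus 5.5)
Your proof is correct and follows the paper's route. For (1) you use (MR.2) and (MR.3); you build the map $E\to End_R(D_R)$, whereas the paper builds its inverse $End_R(D_R)\to F$ and identifies the image as $E$. The minimality part goes through $B\cap D\in T$ and $DD=D$, and (3) is the special case of (2) with $D=soc(R_R)$.

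The one place you go beyond the paper is the converse in (2). The paper only notes that $FD\subseteq D$ is equivalent to $E=F$ and cites (1), which does not by itself rule out an abstract isomorphism $F\cong End_R(D_R)$ over $R$ with $E\neq F$. Your observation that every ring endomorphism of $F$ fixing $R$ is the identity, by (MR.1) and (MR.2), is exactly the missing step that forces $E=F$.
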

		\begin{proof}We show (1) and (2). The case (3) is an immediate consequence of (2).
			Recall \(F\) is a \(T\)-extension ring of \(R\). We note that \(E\) is a subring of \(F\).
			We are going to define a ring embedding \(\theta : End_R(D_R) \rightarrow F\). Let \(\alpha \in End_R(D_R)\). Then (MR.3) implies there is a \(f\in F\)
			such that \(\alpha(d)=fd\) for all \(d\in D\). Such an element \(f\) is uniquely determined by \(\alpha\) according to (MR.2). Define
			\(\theta (\alpha) = f\). Certainly \(\theta \) is well-defined.
			
			Let \(\alpha_1\) and \(\alpha_2\in End_R(D_R)\) such that \(f_1=\theta (\alpha_1)\) and \(f_2=\theta(\alpha_2)\). Note  \((f_1+f_2)d=f_1d+f_2d=\alpha_1(d)+\alpha_2(d)=(\alpha_1+\alpha_2)(d)\). That is \(\theta(\alpha_1+\alpha_2)= f_1+f_2=\theta(\alpha_1)+\theta(\alpha_2)\).
			
			If \(d\in D\), we have \(\alpha_1\alpha_2(d)=\alpha(f_2d)=f_1f_2d\) since \(f_2d\in D\). This implies \(\theta(\alpha_1\alpha_2)=
			f_1f_2=\theta(\alpha_1)\theta(\alpha_2)\) and thus \(\theta\) is a ring homomorphism. Suppose \(\theta (\alpha)=0\). That is \(\alpha (d)=0\) for all \(d\in D\). We have \(\alpha=0\) by (MR.2). Furthermore, the embedding \(R \hookrightarrow End_R(D_R)\) can be realized by  \(r\mapsto \alpha_r\) where
			\(\alpha_r(d)=rd \) for all \(d\in D\). By (MR.2) and (MR.3) \(\theta (\alpha_r)=r\) for any \(r\in R\).
			We can see that the image  \(\theta(End_R(D_R))\) is \(E\). Thus \(End_R(D)\) is isomorphic to \(E\) over \(R\). This shows (1). 
			
			We can view \(D\) as a subset of \(F\).  Note   \(FD\subseteq D\) is equivalently to \(E=F\). The first part of (2) follows immediately from (1). Finally
			we suppose \(D\) is a minimal element of \(T\). Let \(I\in T\). So \(I\cap D\in T\). The minimality of \(D\) implies \(I\cap D=D\), and
			thus \(D\) is the unique minimal element of \(T\). Let \(f\in F\). We have \(fD\subseteq R\) from (MR.1). Note that
			\(fDD\subseteq RD\subseteq D\). However \(DD=D\) since \(D\) the minimal element of \(T\) and \(DD\in T\). This forces \(fD\subseteq D\) and hence \({_F}D_R\) is a bimodule. This completes the proof of (2).
		\end{proof}

		\begin{lemma}\label{InclusionLemma}
			\begin{enumerate} 
				\item[(1)] \(Q^r_{T_1}(R)\subseteq Q^r_{T_2}(R)\) when \(T_1\subseteq T_2\) are
				right Martindale sets of \(R\).
				\item[(2)] Suppose \(\varnothing \neq {\mathcal A}\subseteq \mathbb{M}^r_R\). We have \(Q^r_{T_{\mathcal A}}(R)=\cap _{T\in {\mathcal A}}Q^r_T(R)\) where
				\(T_{\mathcal A}=\cap_{T\in {\mathcal A}} T\).
			\end{enumerate}
		\end{lemma}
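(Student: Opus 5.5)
Part (1) follows directly from the definition of \(Q^r_T(R)\). If \(q\in Q^r_{T_1}(R)\), then \(q\in Q^r_{\max}(R)\) and \(qA\subseteq R\) for some \(A\in T_1\). Since \(T_1\subseteq T_2\), the same \(A\) lies in \(T_2\), so \(q\in Q^r_{T_2}(R)\). Nothing else is needed.

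For (2), I first check that \(T_{\mathcal A}=\cap_{T\in\mathcal A}T\) is itself a right Martindale set, so that \(Q^r_{T_{\mathcal A}}(R)\) makes sense. This is the remark after Definition~\ref{MartindaleSet} that \(\mathbb{M}^r_R\) is closed under intersections. Concretely, \(R\in T\) for every \(T\in\mathcal A\), so the intersection is non-empty. Each member is an ideal that is also a dense right ideal. Conditions (1) and (2) of Definition~\ref{MartindaleSet} hold in each \(T\), hence in the intersection.

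Then the inclusion \(Q^r_{T_{\mathcal A}}(R)\subseteq \cap_{T\in\mathcal A}Q^r_T(R)\) is immediate from part (1), since \(T_{\mathcal A}\subseteq T\) for every \(T\in\mathcal A\).

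The reverse inclusion is the main obstacle. Take \(q\in\cap_{T\in\mathcal A}Q^r_T(R)\). For each \(T\in\mathcal A\) there is some \(A_T\in T\) with \(qA_T\subseteq R\), but these ideals vary with \(T\) and need not lie in \(T_{\mathcal A}\). Intersecting them fails when \(\mathcal A\) is infinite, and is not obviously inside every \(T\) anyway. The key idea is to use upward closure, condition (1), with the canonical ideal
\[
B=\{r\in R \mid Rr\subseteq q^{-1}R\},
\]
the largest two-sided ideal of \(R\) contained in the right ideal \(q^{-1}R=\{r\in R\mid qr\in R\}\).

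The argument then runs as follows. Fix \(T\in\mathcal A\). The ideal \(A_T\) is two-sided and satisfies \(qA_T\subseteq R\), so \(A_T\subseteq q^{-1}R\), and therefore \(A_T\subseteq B\). One checks that \(B\) is a two-sided ideal: it is closed under addition and under left multiplication by \(R\) by construction. For right multiplication, if \(r\in B\) and \(s\in R\), then \(q(xrs)=(qxr)s\in R\) for all \(x\in R\), so \(rs\in B\). Since \(B\) is an ideal containing \(A_T\in T\), condition (1) gives \(B\in T\). This holds for every \(T\in\mathcal A\), so \(B\in T_{\mathcal A}\). Finally, \(qB\subseteq R\) because \(1\in R\), so \(B\subseteq q^{-1}R\). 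Hence \(q\in Q^r_{T_{\mathcal A}}(R)\), which gives equality.
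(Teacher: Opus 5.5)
Your proof is correct and follows essentially the same route as the paper: both apply the upward-closure axiom (1) to a single ideal that contains every witness \(A_T\) and is still carried into \(R\) by \(q\). The paper takes this ideal to be the sum \(\sum_{T} A_T\), while you take the largest two-sided ideal \(B\) inside \(q^{-1}R\), which contains that sum and has the minor advantage of not requiring a simultaneous choice of witnesses.
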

		\begin{proof}
			Part (1) is Obvious.  Note \(\varnothing  \neq T_{\mathcal A}\in \mathbb{M}^r_R\). The inclusion \(Q^r_{T_A}(R)\subseteq \cap _{T\in \mathcal A}Q^r_T(R)\)
			follows from part (1). Now let \(f\in \cap _{T\in \mathcal A}Q^r_T(R)\). Thus there is
			\(I_j\in T_j\) for each \(T_j\in \mathcal{A}\) such that \(fI_j\subseteq R\). Let \(I=\sum_{j} I_j\). Clearly \(I\in T_j\) for each \(j\) and thus \(I\in T_{\mathcal A}\). Moreover \(fI\subseteq R\). Hence \(f\in Q^r_{T_{\mathcal A}}(R)\). This completes the proof of part (2).
		\end{proof}

		\begin{definition} Let \({\overline T}\) be the collection of all ideals of \(R\) that are right dense, i.e. the unique largest right Martindale set of \(R\). We call the ring \(Q_{\overline T}^r(R)\) the {\em maximal
			sub-Martindale right ring of quotients} of \(R\) and denote this special extension ring by  \(Q^r(R)\) if there is no confusion.  The maximal sub-Martindale left ring of quotients, \(Q^l(R)\), can be defined similarly.
		\end{definition}
Clearly that \(Q^r(R)\) is the unique largest sub-Martindale right ring of quotients of \(R\). Thus
	  \(Q^r_T(R)\subseteq Q^r(R)\) for any 
	 right Martindale set \(T\) of \(R\). Furthermore when \(R\) is semiprime, the ring  \(Q^r(R)\) is
		the well-known Martindale  right ring of quotients.

		\begin{lemma}\label{SinRinQmaxS}
			Assume \(S\) is a subring of \(R\) and \(I\leqslant_r  R\)  such that \(I\subseteq S\). Then \(I\leqslant_r ^{den} R\) if and only if  \(I\leqslant_r ^{den} S\) and \(S\subseteq R\subseteq Q^r_{\max}(S)\).
		\end{lemma}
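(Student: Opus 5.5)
Both directions come down to unwinding the definition of density, $x(y^{-1}I)\neq 0$, and tracking which ring the elements $x,y$ live in. I will also use the standard characterization that, for rings $S\subseteq R$, the extension $R$ is a right ring of quotients of $S$ (equivalently, embeds in $Q^r_{\max}(S)$ over $S$) if and only if for all $x,y\in R$ with $x\neq 0$ there is $s\in S$ with $xs\neq 0$ and $ys\in S$. Throughout, $y^{-1}I$ means $\{r\in R: yr\in I\}$ when computed in $R$ and $\{s\in S: ys\in I\}$ when computed in $S$.

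\emph{($\Rightarrow$)} Assume $I\leqslant_r^{den} R$. For $S\leqslant_r^{den} S$: take $x,y\in S$ with $x\neq 0$. Density of $I$ in $R$ gives $r\in R$ with $yr\in I$ and $xr\neq 0$. Density again, applied to the pair $(xr,r)$, gives $r'\in R$ with $rr'\in I$ and $xrr'\neq 0$. Set $s=rr'$. Then $s\in I\subseteq S$, and $ys=(yr)r'\in I$ because $I$ is a right ideal of $R$. Since $xs\neq 0$, we get $x(y^{-1}I)_S\neq 0$.

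For $R\subseteq Q^r_{\max}(S)$: take $x,y\in R$ with $x\neq 0$. Exactly the same two-step argument yields $s\in I\subseteq S$ with $xs\neq 0$ and $ys\in I\subseteq S$. This is the quotient criterion above, so $R$ is a right ring of quotients of $S$ and embeds in $Q^r_{\max}(S)$ over $S$.

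\emph{($\Leftarrow$)} Assume $I\leqslant_r^{den} S$ and $S\subseteq R\subseteq Q^r_{\max}(S)$. Take $x,y\in R$ with $x\neq 0$.
\begin{enumerate}
\item[(a)] Since $R$ is a right ring of quotients of $S$, there is $s_1\in S$ with $ys_1\in S$ and $xs_1\neq 0$.
\item[(b)] Apply the same property to the pair $(xs_1,0)$ to get $s_2\in S$ with $xs_1s_2\neq 0$ and $xs_1s_2\in S$. Put $u=xs_1s_2\in S$, which is nonzero, and $v=ys_1s_2\in S$.
\item[(c)] Density of $I$ in $S$, applied to the pair $(u,v)$, gives $s_3\in S$ with $vs_3\in I$ and $us_3\neq 0$.
\item[(d)] Put $r=s_1s_2s_3\in R$. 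Then $yr=vs_3\in I$ and $xr=us_3\neq 0$, so $x(y^{-1}I)\neq 0$ in $R$.
\end{enumerate}

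The main subtlety is justifying the quotient criterion from the embedding $R\subseteq Q^r_{\max}(S)$. I would use the known fact that $Q^r_{\max}(S)$ is a rational extension of $S_S$, so any $S$-submodule between $S$ and $Q^r_{\max}(S)$, in particular $R$, inherits that property. The remaining steps are routine element chasing.
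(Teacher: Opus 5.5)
Your proof follows the paper's. In ($\Rightarrow$) you use the same two-step application of density of $I$ in $R$ to get a witness $rr'\in I\subseteq S$; specialized to $x,y\in S$, this also fills in the paper's ``obviously'' for $I\leqslant_r^{den}S$. In ($\Leftarrow$) you use the same three-step chase, except that you push $y$ into $S$ before $x$ rather than after.

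There is one slip to fix in step (b). Applying the criterion to the pair $(xs_1,0)$ only gives $xs_1s_2\neq 0$ and the vacuous $0\cdot s_2\in S$; it does not give $xs_1s_2\in S$. Apply it instead to the pair $(xs_1,xs_1)$. Then $v=ys_1s_2\in S$ holds automatically, because $ys_1\in S$ and $s_2\in S$, and the rest of your argument goes through unchanged.
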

		\begin{proof} (\(\Rightarrow\)) Assume \(I\leqslant_r ^{den} R\). Obviously we have \(I\leqslant_r ^{den} S\).		
			Let \(a,b\in R\) with \(a\neq 0\). There is a \(r_1\in R\) such that \(ar_1\neq 0\) and
			\(br_1\in I\). Furthermore, there is a \(r_2\in R\) such that \(a(r_1r_2)\neq 0\) and
			\(r_1r_2\in I\subseteq S\).  Thus \(b(r_1r_2)\in I\subseteq S\).			
			This shows \(S_S\leqslant_r ^{den} R_S\). We can view \(R\) as a subring of \(Q^r_{\max}(S)\).
	
			(\(\Leftarrow\)) Assume \(I\leqslant_r ^{den} S\) and \(S\subseteq R\subseteq Q^r_{\max}(S)\).  Let
			\(a,b\in R\) with \(a\neq 0\). There is \(s_1\in S\) such that \(0\neq as_1\in S\) since \(S_S\leqslant_r ^{den} R_S\). Similarly there is \(s_2\in S\) such that \(as_1s_2\neq 0\) and \(bs_1s_2\in S\). Now there is \(s_3\in S\) such that \(as_1s_2s_3\neq 0\) and \(bs_1s_2s_3\in I\) since
			\(I\leqslant_r ^{den} S\). As a consequence we have  \(I\leqslant_r ^{den} R\).
		\end{proof}
		
		\begin{definition}\label{MsetI}
			Let \(I\unlhd _r^{den}R\). Define \(\mathbb{M}^r_R(I)=\left\{T \in \mathbb{M}^r_R\middle| I \in T\right\}\) the collection of all right Martindale sets of \(R\) containing \(I\). The set \(\mathbb{M}^l_R(I)\) is defined similarly for \(I\unlhd _l^{den}R\). Suppose further that  \(S\) is a subring of \(R\) with \(I\subseteq S\). Define \(\Phi : \mathbb{M}^r_R(I) \rightarrow \mathbb{M}^r_S(I)\) such that
			\[\Phi(T)=\left\{K\unlhd S \middle | IJI\subseteq K \text{ for some }J\in T\right\} \text{ where } T\in \mathbb{M}^r_R(I).\]
		\end{definition}
		Note the previous lemma shows the mapping \(\Phi\) is well-defined. Furthermore if \(S=R\), we have \(\Phi (T)=T\).

		\begin{lemma}\label{IinSandR}
			Let \(S\) be a subring of \(R\) and \(I\unlhd_r^{den} R\) such that \(I\subseteq S\). Then the map \(\Phi : \mathbb{M}^r_R(I) \rightarrow \mathbb{M}^r_S(I)\) is a bijection. Suppose further that  \(\overline T_R\)  and   \(\overline T_S\) are the unique  maximum right  Martindale sets of \(R\) and  \(S\) respectively. Then
			we have \(\Phi(\overline T_R)=\overline T_S\).
		\end{lemma}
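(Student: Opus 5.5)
I will show that \(\Phi\) is a bijection by writing down an explicit inverse \(\Psi:\mathbb{M}^r_S(I)\rightarrow \mathbb{M}^r_R(I)\), namely
\[\Psi(U)=\left\{J\unlhd R \middle| IKI\subseteq J \text{ for some } K\in U\right\},\]
which mirrors the definition of \(\Phi\).

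\emph{Preliminary facts.} Since \(I\subseteq S\) is a right ideal of \(R\) and \(I\leqslant_r^{den}R\), Lemma~\ref{SinRinQmaxS} gives \(I\leqslant_r^{den}S\) and \(S\subseteq R\subseteq Q^r_{\max}(S)\). Moreover \(I\) is an ideal of \(S\), because \(SI\subseteq RI\subseteq I\) and \(IS\subseteq I\). Hence \(I\unlhd_r^{den}S\), so \({\bf l}_S(I)=0\).

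\emph{Key density claim.} If \(J\unlhd_r^{den}R\), then \(IJI\) is an ideal of \(S\) contained in \(I\), and it is right dense in \(S\). Indeed, \({\bf l}_R(IJI)=0\) because a product of dense right ideals of \(R\) is dense. Then \(IJI\) is a right ideal of \(R\) lying in \(S\), so Lemma~\ref{SinRinQmaxS} makes it dense in \(S\). Conversely, if \(K\unlhd_r^{den}S\), then \(IKI\) is an ideal of \(R\), since \(RIKIR\subseteq IKI\). I need \({\bf l}_R(IKI)=0\). Suppose \(x\in R\) with \(xIKI=0\). For any \(i\in I\) we have \(xi\in I\subseteq S\), and \((xi)KI=0\). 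Since \(K\) and \(I\) are dense in \(S\), \({\bf l}_S(KI)=0\), so \(xi=0\). Thus \(xI=0\), and as \(I\) is dense in \(R\), \(x=0\). So \(\Phi\) and \(\Psi\) produce sets of dense ideals.

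\emph{Martindale set axioms.} The upward-closure axiom (1) holds by construction. Both sets contain \(I\), since \(III\subseteq I\) and \(I\in T\). For products, take \(K_1\supseteq IJ_1I\) and \(K_2\supseteq IJ_2I\). Then
\[K_1K_2\supseteq IJ_1I\,IJ_2I\supseteq I(J_1IIJ_2)I,\]
and \(J_1IIJ_2\in T\) because \(T\) is closed under products. The same argument works for \(\Psi\).

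\emph{Mutual inverses.} For \(T\in\mathbb{M}^r_R(I)\), I must show \(\Psi\Phi(T)=T\). If \(J\in T\), then \(K=IJI\in\Phi(T)\) and \(IKI\subseteq J\), so \(J\in\Psi\Phi(T)\). Conversely, if \(J'\supseteq IKI\) with \(K\supseteq IJI\) and \(J\in T\), then \(J'\supseteq IIJII\in T\), so upward closure gives \(J'\in T\). The identity \(\Phi\Psi(U)=U\) is symmetric and uses that \(U\) is closed under products and contains \(I\). I expect the density verification above to be the main obstacle, and \({\bf l}_S(KI)=0\) is the point to handle carefully.

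\emph{Maximal sets.} Finally, \(\Phi(\overline T_R)\subseteq\overline T_S\) is automatic. For the reverse inclusion, take any \(K\in\overline T_S\). By the density claim, \(J=IKI\unlhd_r^{den}R\), so \(J\in\overline T_R\). Then \(IJI\subseteq K\), which gives \(K\in\Phi(\overline T_R)\).
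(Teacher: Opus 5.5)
Your proposal is correct and follows essentially the same route as the paper. The paper also builds the inverse \(T_S\mapsto\{L\unlhd R\mid IJI\subseteq L \text{ for some } J\in T_S\}\) and uses the sandwich \(I(\cdot)I\) to pass between \(R\) and \(S\). It differs from you only in minor ways:
\begin{itemize}
\item It proves surjectivity and injectivity separately, where you verify both compositions.
\item It obtains \(\Phi(\overline T_R)=\overline T_S\) from surjectivity together with the monotonicity of \(\Phi\), where you give a direct element argument.
\item It deduces \(IKI\unlhd_r^{den}R\) from the converse direction of Lemma~\ref{SinRinQmaxS}, where you use a direct left-annihilator computation. Your computation is valid. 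The same idea, via \({\bf l}_S(X)={\bf l}_R(X)\cap S\), would also make the \(S\)-side density claims elementary.
\end{itemize}
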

		\begin{proof}
			Suppose \(T_S\in \mathbb{M}^r_S(I)\) and let  \[T_R=\left\{L \unlhd R \middle | IJI \subseteq L \mbox{ for some }J \in T_S\right\}.\]  
			Note if \(J\in T_S\), we have \(IJI\unlhd_r^{den} S\) and \(IJI \unlhd R\). By Lemma~\ref{SinRinQmaxS}, we have \(IJI\unlhd_r^{den} R\). Therefore  \(T_R\in \mathbb{M}^r_R(I)\).

			Before we show \(\Phi\) is a bijection, we
			should note that \(IKI\in T_R\cap T_S\) if \(K\in T_R\cup T_S\). Now let \(K\in T_S\). Since \(I(IKI)I \subseteq K\),
			we have \(K\in \Phi(T_R)\). That is \(T_S\subseteq \Phi(T_R)\). On the other hand, if \(K\in \Phi(T_R)\), there is
			\(J\in T_R\) such that \(IJI\subseteq K\). There is also \(J'\in T_S\) such that \(IJ'I\subseteq J\). Thus
			\(I(IJ'I)I\subseteq K\). Since  \(IJ'I\in T_S\), we have \(K\in T_S\). As a consequence, we have \(\Phi(T_R)=T_S\)
			and thus \(\Phi\) is a surjection.
			
			Let \(T_R, T'_R\in \mathbb{M}^r_R(I)\) such that \(T_S=\Phi(T_R)=\Phi(T'_R)\in \mathbb{M}^r_S(I)\). Let \(J\in T_R\).
			So \(IJI\in T_S\). There is \(J'\in T'_R\) such that \(IJ'I\subseteq IJI\). Since \(IJ'I\in T'_R\), we have
			\(IJI\in T'_R\). So \(T_R\subseteq T'_R\). By symmetry we have \(T'_R\subseteq T_R\) and thus \(T_R= T'_R\). This shows
			\(\Phi\) is a bijection.
			
			We note that  \(\overline T_R\in \mathbb{M}^r_R(I)\) and \(\overline T_S\in \mathbb{M}^r_S(I)\). Since \(\Phi\) is onto, there is \(T\in \mathbb{M}^r_R(I)\) such that \(\Phi(T)=\overline T_S\). By construction, \(T\subseteq \overline T_R\).  Clearly we have  \(\overline T_S\subseteq \Phi(\overline T_R)\). The maximality of \(\overline T_S\) forces \(\overline T_S= \Phi(\overline T_R)\).
		\end{proof}

		\begin{theorem}\label{IsoMartQR}
			Let \(S\) be a subring of \(R\) and \(I\unlhd_r^{den} R\) such that \(I \subseteq S\). Let \(\Phi\) be the bijection introduced in Definition~\ref{MsetI}.
			\begin{enumerate}
				\item[(1)] If \(T_R\in \mathbb{M}^r_R(I)\), then \(Q^r_{T_R}(R)= Q^r_{\Phi(T_R)}(S)\), i.e. the following diagram commutes
				\[
				\begin{tikzcd}
					S\arrow[hook]{r}\arrow[hook]{d}&R \arrow[hook]{d}\arrow[hook]{ld}\\
Q^r_{\Phi(T_R)}(S) \arrow[Isom]{r} & Q^r_{T_R}(R).
				\end{tikzcd}
				\]
				\item[(2)] If \(T_S\in \mathbb{M}^r_S(I)\), then \(Q^r_{\Phi^{-1}(T_S)}(R)= Q^r_{T_S}(S)\).
			\end{enumerate}
			
		\end{theorem}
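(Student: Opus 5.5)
Part (2) follows from part (1) applied to \(T_R=\Phi^{-1}(T_S)\), since \(\Phi\) is a bijection by Lemma~\ref{IinSandR}. So everything reduces to (1). The plan is first to put both rings inside one common maximal ring of quotients, and then to compare the defining conditions.

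By Lemma~\ref{SinRinQmaxS} (with the dense right ideal \(I\subseteq S\)), we have \(S\subseteq R\subseteq Q^r_{\max}(S)\). It is standard that \(Q^r_{\max}(R)=Q^r_{\max}(S)\) over \(R\) in this situation. To justify this, I would check that \(Q^r_{\max}(S)\) is a right ring of quotients of \(R\) (density of \(R_R\) in it follows from density of \(S_S\)), and that every rational extension of \(R\) is rational over \(S\) because \(I\) is dense in both. Alternatively, I would avoid this identification: realise \(Q^r_{T_R}(R)\) concretely via Theorem~\ref{Textension}, and show directly that \(Q^r_{\Phi(T_R)}(S)\) is a \(T_R\)-extension of \(R\). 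I would take the second route as the main argument, since Theorem~\ref{Textension} gives the isomorphism over \(R\), and hence over \(S\), which makes the diagram commute.

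Let \(F=Q^r_{\Phi(T_R)}(S)\) and \(T_S=\Phi(T_R)\). First, \(R\subseteq F\): for \(r\in R\), \(rI\subseteq I\subseteq S\) and \(I\in T_S\), so \(r\in F\) (here \(R\subseteq Q^r_{\max}(S)\)). I would then verify the three conditions in turn.

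\textbf{(MR.1).} If \(f\in F\), then \(fK\subseteq S\) for some \(K\in T_S\), and \(K\supseteq IJI\) for some \(J\in T_R\). Then \(f(IJI)\cdot I\) lies in \(SI\subseteq I\subseteq R\), and \(IJI\cdot I=IJI^2\in T_R\).

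\textbf{(MR.2).} If \(fL=0\) with \(L\in T_R\), then \(ILI\in T_S\) satisfies \(f(ILI)\subseteq fL=0\), so \(f=0\) by (MR.2) for \(F\) over \(S\) (Proposition~\ref{propMs}).

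\textbf{(MR.3).} This is the main obstacle. Given \(L\in T_R\) and \(\sigma\in Hom_R(L_R,R_R)\), I would restrict \(\sigma\) to \(K=ILI\), which lies in \(T_S\), and then land in \(S\) by taking \(\sigma'(x)=\sigma(x)\) on \(K'=K I\). Since \(\sigma(kI)=\sigma(k)I\subseteq RI\subseteq I\subseteq S\) and \(K'\in T_S\), \(\sigma'\) is \(S\)-linear into \(S\). By (MR.3) for \(F\), there is \(f\in F\) with \(fx=\sigma(x)\) on \(K'\). Then \(fl-\sigma(l)\) for \(l\in L\) is killed on the right by \(K'\), since \(lK'\subseteq K'\) and \(\sigma(lk)=\sigma(l)k\). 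As \(K'\) is right dense in \(R\) by Lemma~\ref{SinRinQmaxS}, we have \({\bf l}(K')=0\) in \(Q^r_{\max}(S)\), so \(fl=\sigma(l)\). Finally, Theorem~\ref{Textension} gives \(F\cong Q^r_{T_R}(R)\) over \(R\); identifying both inside \(Q^r_{\max}(S)=Q^r_{\max}(R)\) turns this into equality.
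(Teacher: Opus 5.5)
Your proposal is correct and follows the paper's route: show that \(Q^r_{\Phi(T_R)}(S)\) is a \(T_R\)-extension of \(R\), apply Theorem~\ref{Textension} together with \(Q^r_{\max}(R)=Q^r_{\max}(S)\), and deduce (2) from the bijectivity of \(\Phi\). Your checks of (MR.1) and (MR.2) fill in details the paper calls obvious. In (MR.3) you verify \(fl=\sigma(l)\) directly, whereas the paper compares with the representative in \(Q^r_{T_R}(R)\); the extra factor \(I\) in \(K'=KI\) is harmless but unnecessary, since already \(\sigma(ILI)\subseteq RI\subseteq S\).
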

		\begin{proof}
			We first show (1).  Recall
			Lemma~\ref{SinRinQmaxS} shows  \(S\subseteq R\subseteq Q^r_{\max}(S)\), and  we have \(Q^r_{\max}(R)=Q^r_{\max}(S)\). Thus it suffices to show
			\(Q^r_{\Phi(T_R)}(S)\) is a \(T_R\)-extension of \(R\) by Theorem~\ref{Textension}. 	The conditions (MR.1) and (MR.2) are obvious since
			\(Q^r_{\Phi(T_R)}(S)\subseteq Q^r_{\max}(R)\). 
			
			Let \(K\in T_R\) and \(\sigma \in Hom_R(K_R,R_R)\). There is a unique \(f\in Q_{T_R}^r(R)\) such that \(\sigma (x)=fx\) for any \(x\in K\). Let \(J=IKI\subseteq K\). Note \(J\in T_R \cap \Phi(T_R)\). There is a unique \(g \in Q_{\Phi(T_R)}^r(S)\) such that \(\sigma_{|J}(x)=gx\)  for any \(x\in J\). 
			We note that both \(f\) and \(g\) are in \(Q^r_{\max}(R)=Q^r_{\max}(S)\). Then Lemma~\ref{subdenseidealhom} implies \(f=g\). This show (MR.3) is true and hence (1) is true.

Item (2) follows  from 	the fact that \(\Phi\) is a bijection.
		\end{proof}

		\begin{corollary}
			Let \(S\) be a subring of \(R\) and \(I\unlhd_r^{den} R\) such that \(I \subseteq S\).
			Then \(Q^r(S)=Q^r(R)\).
		\end{corollary}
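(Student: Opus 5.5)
The corollary follows by applying Theorem~\ref{IsoMartQR}(1) to the largest right Martindale set of \(R\). Write \(\overline T_R\) and \(\overline T_S\) for the unique maximal right Martindale sets of \(R\) and \(S\); by definition \(Q^r(R)=Q^r_{\overline T_R}(R)\) and \(Q^r(S)=Q^r_{\overline T_S}(S)\). The argument has three steps.

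\emph{Step 1: \(\Phi\) applies to \(\overline T_R\).} The set \(\overline T_R\) consists of all ideals of \(R\) that are right dense. Since \(I\unlhd_r^{den} R\), we have \(I\in\overline T_R\), so \(\overline T_R\in\mathbb{M}^r_R(I)\).

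\emph{Step 2: identify \(\Phi(\overline T_R)\).} Lemma~\ref{IinSandR} gives \(\Phi(\overline T_R)=\overline T_S\).

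\emph{Step 3: conclude.} Theorem~\ref{IsoMartQR}(1), applied with \(T_R=\overline T_R\), yields
\[
Q^r(R)=Q^r_{\overline T_R}(R)=Q^r_{\Phi(\overline T_R)}(S)=Q^r_{\overline T_S}(S)=Q^r(S).
\]
This equality is literal, not just an isomorphism. Lemma~\ref{SinRinQmaxS} gives \(S\subseteq R\subseteq Q^r_{\max}(S)\), and hence \(Q^r_{\max}(R)=Q^r_{\max}(S)\). So both rings sit inside this common maximal right ring of quotients, and they agree as subrings fixing \(R\).

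The only point needing care is the identification \(\Phi(\overline T_R)=\overline T_S\), and that is exactly the last assertion of Lemma~\ref{IinSandR}. It relies on the surjectivity of \(\Phi\) together with the maximality of \(\overline T_S\), and the lemma has already established both. So no real obstacle remains; the corollary is a direct specialization.
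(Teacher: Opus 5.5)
Your proposal is correct and is essentially the paper's own argument. The paper gives no separate proof; the corollary is obtained exactly as you describe, by specializing Theorem~\ref{IsoMartQR}(1) to \(T_R=\overline T_R\) (legitimate because \(I\in\overline T_R\)) and using \(\Phi(\overline T_R)=\overline T_S\) from Lemma~\ref{IinSandR}.
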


		\section{Sub-Martindale symmteric rings of quotients}
		\begin{definition}\label{1compatible}
			Suppose \(A\in T_r\in \mathbb{M}_R^r\) and \(B\in T_l\in \mathbb{M}_R^l\). Let
			\(\sigma \in Hom_R(A_R,R_R)\) and \(\phi \in Hom_R({_R}B,{_R}R)\). The pair, \((A,\sigma)\)
			and \((B,\phi)\), is said to be {\em compatible} if there are 	\(\hat{A}\in T_r\) and \(\hat{B}\in T_l\) where 
\(\hat{A}\subseteq A\) and 		 \(\hat{B}\subseteq B\)   such
			that \(b\sigma (a)=(b)\phi a\) for any \(a\in \hat{A}\) and \(b\in \hat{B}\).
		\end{definition}
		
		The above concept of ``compatibility'' generalizes what is called {balanced} or {associativity condition} in Passman~\cite[p.~210]{Passman87} and \cite[p.~87]{Passman89}. See also Lanning~\cite[p.~49]{Lanning96}.
		
		For convenience, we stipulate that \(T_r\in \mathbb{M}_R^r\) and \(T_l\in \mathbb{M}_R^l\) hereafter.
		\begin{lemma}
			Suppose \((A,\sigma)\) and \((B,\phi)\) are compatible where \(A\in T_r\) and
			\(B\in T_l\).  Assume  \(A_0\in T_r\) and  \(B_0\in T_l\) such that   \(A_0\subseteq A\) and
			\(B_0\subseteq B\). Then \((A_0,\sigma_{|A_0})\) and
			\((B_0,\phi_{|B_0})\) are compatible.
		\end{lemma}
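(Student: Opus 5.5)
We take the witnesses \(\hat{A}\in T_r\) and \(\hat{B}\in T_l\) from Definition~\ref{1compatible} for the pair \((A,\sigma)\), \((B,\phi)\). We then shrink them so they sit inside \(A_0\) and \(B_0\). Put \(\hat{A}_0=\hat{A}A_0\) (or \(\hat{A}\cap A_0\)) and \(\hat{B}_0=\hat{B}B_0\) (or \(\hat{B}\cap B_0\)). The natural choice is the intersection, and we must check it is still in the Martindale set.

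First, we check \(\hat{A}\cap A_0\in T_r\). Both \(\hat{A}\) and \(A_0\) lie in \(T_r\), so \(A_0\hat{A}\in T_r\) by condition (2) of Definition~\ref{MartindaleSet}. Since \(\hat{A}\) and \(A_0\) are ideals, \(A_0\hat{A}\subseteq \hat{A}\cap A_0\). Condition (1) then gives \(\hat{A}\cap A_0\in T_r\). The same argument with the left Martindale axioms gives \(\hat{B}\cap B_0\in T_l\). We also have \(\hat{A}\cap A_0\subseteq A_0\) and \(\hat{B}\cap B_0\subseteq B_0\), as the definition requires.

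Second, we verify the balancing identity. Let \(a\in \hat{A}\cap A_0\) and \(b\in \hat{B}\cap B_0\). Then \(a\in\hat{A}\) and \(b\in\hat{B}\), so compatibility of the original pair gives \(b\sigma(a)=(b)\phi\, a\). Since \(a\in A_0\) and \(b\in B_0\), we have \(\sigma_{|A_0}(a)=\sigma(a)\) and \((b)\phi_{|B_0}=(b)\phi\). Hence \(b\,\sigma_{|A_0}(a)=(b)\phi_{|B_0}\,a\), which is exactly compatibility of \((A_0,\sigma_{|A_0})\) and \((B_0,\phi_{|B_0})\).

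There is no real obstacle here. The only point needing care is that the intersection of two members of a Martindale set lies in the set. This is not an explicit axiom, but it follows from closure under products together with closure under larger ideals, as in the first step. It is the same fact already used implicitly in the proofs of Theorem~\ref{FDR_minimal} and Lemma~\ref{InclusionLemma}.
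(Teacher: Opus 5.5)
Your proof is correct and follows the same approach as the paper: take the witnesses \(\hat A,\hat B\) for the original pair and restrict to \(A_0\cap\hat A\in T_r\) and \(B_0\cap\hat B\in T_l\). The one thing you add is an explicit justification that these intersections lie in the Martindale sets, via \(A_0\hat A\subseteq A_0\cap\hat A\) and axioms (1) and (2); the paper simply asserts this.
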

		\begin{proof} The compatibility of \((A, \sigma)\) and \((B,\phi)\) implies 
			there are \(\hat{A}\in T_r\) and \(\hat{B}\in T_l\) with \(\hat{A}\subseteq A\)
			and \(\hat{B}\subseteq B\) such that \(b\sigma a=b\phi a\) for any
			\(a\in \hat{A}\) and \(b\in \hat{B}\). In particular, this
			is true for \(a\in A_0\cap \hat{A}\in T_r\) and \(b\in B_0\cap \hat{B}\in T_l\).
		\end{proof}
		
		\begin{lemma}\label{compatibleLemma} Suppose
		 \(A_i\in T_r\), \(B_i\in T_l\), \(\sigma_i\in Hom_R({(A_i)}_R, R_R)\), and \(\phi_i\in Hom_R({_R}(B_i), {_R}R)\) where \(i=1, 2\). Assume \((A_1,\sigma_1)\)
			and \((B_1,\phi_1)\) are compatible.
			\begin{enumerate}
				\item[(1)] Suppose \((A_1,\sigma_1)\sim (A_2,\sigma_2)\).
				Then \((B_1,\phi_1)\sim (B_2,\phi_2)\) if and only if \((A_2,\sigma_2)\) and
				\((B_2,\phi_2)\) are compatible.
				\item[(2)] Suppose \((B_1,\phi_1)\sim (B_2,\phi_2)\). Then
				\((A_1,\sigma_1)\sim (A_2,\sigma_2)\) if and only if \((A_2,\sigma_2)\) and
				\((B_2,\phi_2)\) are compatible.
			\end{enumerate}
		\end{lemma}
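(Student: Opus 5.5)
By the symmetry between right and left Martindale sets, (2) is the mirror image of (1): swap the roles of \(\sigma\) and \(\phi\) and use the left version of Lemma~\ref{subdenseidealhom}. So I will only prove (1).

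\emph{Tools.} Throughout I use two facts.
\begin{itemize}
\item Lemma~\ref{subdenseidealhom} and its left-hand analogue. For a left homomorphism \(\phi\) on \(B\in T_l\) vanishing on some \(B'\in T_l\) with \(B'\subseteq B\), we get \(B'(b)\phi=(B'b)\phi=0\), and \({\bf r}_R(B')=0\) then forces \(\phi=0\).
\item The annihilator conditions \({\bf l}_R(A)=0\) for \(A\in T_r\) and \({\bf r}_R(B)=0\) for \(B\in T_l\).
\end{itemize}
Both \(T_r\) and \(T_l\) are closed under finite intersections and products, so every ideal chosen below stays in the appropriate Martindale set. Fix \(\hat A_1\in T_r\) and \(\hat B_1\in T_l\) witnessing the compatibility of \((A_1,\sigma_1)\) and \((B_1,\phi_1)\), that is, \(b\sigma_1(a)=(b)\phi_1 a\) for \(a\in\hat A_1\), \(b\in\hat B_1\).

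\emph{Forward direction.} Assume \((B_1,\phi_1)\sim(B_2,\phi_2)\). Set \(\hat A=\hat A_1\cap A_2\in T_r\) and \(\hat B=\hat B_1\cap B_2\in T_l\). On \(\hat A\subseteq A_1\cap A_2\) we have \(\sigma_2=\sigma_1\), and on \(\hat B\subseteq B_1\cap B_2\) we have \(\phi_2=\phi_1\). Hence for \(a\in\hat A\) and \(b\in\hat B\),
\[
b\sigma_2(a)=b\sigma_1(a)=(b)\phi_1a=(b)\phi_2a,
\]
so \((A_2,\sigma_2)\) and \((B_2,\phi_2)\) are compatible.

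\emph{Converse.} Assume \((A_2,\sigma_2)\) and \((B_2,\phi_2)\) are compatible, with witnesses \(\hat A_2\) and \(\hat B_2\). Put
\[
A'=\hat A_1\cap\hat A_2\cap A_1\cap A_2\in T_r,\qquad B'=\hat B_1\cap\hat B_2\cap B_1\cap B_2\in T_l .
\]
For \(a\in A'\) and \(b\in B'\), compatibility together with \(\sigma_1=\sigma_2\) on \(A'\) gives
\[
(b)\phi_1a=b\sigma_1(a)=b\sigma_2(a)=(b)\phi_2a .
\]
Thus \(\big((b)\phi_1-(b)\phi_2\big)A'=0\), and \({\bf l}_R(A')=0\) yields \((b)\phi_1=(b)\phi_2\) for all \(b\in B'\).

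It remains to pass from \(B'\) to all of \(B_1\cap B_2\). The map \(\phi_1-\phi_2\) restricted to \(B_1\cap B_2\in T_l\) is left \(R\)-linear and vanishes on \(B'\subseteq B_1\cap B_2\) with \(B'\in T_l\). By the left analogue of Lemma~\ref{subdenseidealhom}, it vanishes on all of \(B_1\cap B_2\). Hence \((B_1,\phi_1)\sim(B_2,\phi_2)\).

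The main obstacle is exactly this final extension step. Compatibility only controls \(\phi\) on the small witness ideals, so both the annihilator argument (to get equality on \(B'\)) and the density lemma (to get equality on \(B_1\cap B_2\)) are genuinely needed.
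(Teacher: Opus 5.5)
Your proof is correct and matches the paper's argument; the paper instead uses the preceding restriction lemma to assume the compatibility witnesses are the full ideals, and your explicit last step (extending \((b)\phi_1=(b)\phi_2\) from \(B'\) to \(B_1\cap B_2\) via the left analogue of Lemma~\ref{subdenseidealhom}) is exactly what that reduction, together with transitivity of \(\sim\), implicitly relies on. One minor slip: in the mirror argument for (2), the cancellation step uses \({\bf r}_R(B')=0\) and the extension step uses the original right-sided Lemma~\ref{subdenseidealhom}, not its left version.
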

		\begin{proof}
			We first prove  (1). Note that the discussion after Definition~\ref{idealfunctionpair} shows \(\sim\) is an equivalence relation. 
			From the previous lemma,
			we can assume, without loss of generality, that \(b\sigma_1 (a)=(b)\phi_1 a\) for any
			\(a\in A_1,b\in B_1\). Furthermore \((A_1,\sigma_1)\sim (A_2,\sigma_2)\) implies
			\(\sigma_1(a)=\sigma_2(a)\) and thus \(b\sigma_1 (a)=
			b\sigma_2 (a)\) for any \(a\in A_1\cap A_2\) and \(b\in B_1\cap B_2\).
			
			(\(\Rightarrow\)) Assume \((B_1,\phi_1)\sim (B_2,\phi_2)\). Thus \((b)\phi_1=(b)\phi_2\)
			for any \(b\in B_1\cap B_2\). Note \(b\sigma_1(a)=b\sigma_2(a)\) and
			\((b)\phi_1a=(b)\phi_2a\) for any \(a\in A_1\cap A_2\) and \(b\in B_1\cap B_2\).
			This implies \(b\sigma_2(a)=(b)\phi_2a\) for any \(a\in A_1\cap A_2\) and \(b\in B_1\cap B_2\).
			Since \(A_1\cap A_2\in T_r\) and \(B_1\cap B_2\in T_l\), we have \((A_2,\sigma_2)\) and
			\((B_2,\phi_2)\) are compatible.
			
			(\(\Leftarrow)\) Now suppose \((A_2,\sigma_2)\)  and \((B_2,\phi_2)\) are compatible.
			Without loss of generality we assume \(b\sigma_2(a)=(b)\phi_2a\) for any \(b\in B_2,
			a\in A_2\). This implies \((b)\phi_1 a=b\sigma_1(a)=b\sigma_2(a)=(b)\phi_2a\) for any
			\(a\in A_1\cap A_2\) and \(b\in B_1\cap B_2\). In other words,
			\(((b)\phi_1-(b)\phi_2)(A_1\cap A_2)=0\). Since \(A_1\cap A_2\in T_r\), we have
			\((b)\phi_1=(b)\phi_2\) for any \(b\in B_1\cap B_2\in T_l\) by Lemma~\ref{subdenseidealhom}. This implies 
			\((B_1,\phi_1)\sim (B_2,\phi_2)\) and thus (1) is true. 

 Item (2) can be obtained by symmetry.
		\end{proof}

		Recall an element of \(Q^r_{T_r}(R)\) can be identified as an equivalence class
		\([A,\sigma]\) for some \(A\in T_r\) and \(\sigma\in Hom_R(A_R,R_R)\). Similarly an
		element of \(Q^l_{T_l}(R)\) can be viewed as \([B,\phi]\) for some \(B\in T_l\) and
		\(\phi \in Hom_R({_R}B,{_R}R)\). Lemma~\ref{compatibleLemma} implies the notion of
		compatibility (see Definition~\ref{1compatible}) can be extended to the equivalence
		classes, i.e, elements of \(Q^r_{T_r}(R)\) or \(Q^l_{T_l}(R)\).
		
		\begin{definition}Suppose \(A\in T_r\), \(B\in T_l\) and \(\sigma\in Hom_R(A_R,R_R)\), \(\phi
			\in Hom_R({_R}B,{_R}R)\). The pair, \([A,\sigma]\) and \([B,\phi]\), is said to be
			{\em compatible} if there are \(A'\subseteq A, B'\subseteq B\) where \(A'\in T_r,B'\in T_l\) such that \(b\sigma (a)=(b)\phi a\) for \(a\in A',b\in B'\). Equivalently, the pair, \(f\in Q^r_{T_r}(R)\) and \(g\in Q^l_{T_l}(R)\), is {\em compatible} if \(bfa=bga\) for any \(a\in A, b\in B\) such that \(fA\subseteq R\) and \(Bg\subseteq R\) for some  \(A\in T_r,B\in T_l\). Furthermore, we say that \(g\) is a {\em left compatible} of \(f\) and \(f\) is a {\em right compatible} of \(g\).
		\end{definition}
		
		As a matter of fact,  we can assume without loss of generality that
		\(A'=A,B'=B\) in the definition above, since \([A',\sigma_{|A'}]=[A,\sigma]\) and
		\([B',\phi_{|B'}]=[B,\phi]\).
		
		\begin{lemma}\label{AddMul}
			Let \(A_i\in T_r\) and \(B_i\in T_l\) where \(i=1,2\).
			Suppose each pair, \([A_i,\sigma_i]\) and \([B_i,\phi_i]\), is compatible for \(i=1,2\). Then each of the following pairs is compatible:
			\begin{enumerate}
				\item[(1)] \([A_1,\sigma_1]+[A_2,\sigma_2]\) and \([B_1,\phi_1] + [B_2,\phi_2]\);
				\item[(2)] \([A_1,\sigma_1]\cdot[A_2,\sigma_2]\) and \([B_1,\phi_1] \cdot [B_2,\phi_2]\).
			\end{enumerate}
		\end{lemma}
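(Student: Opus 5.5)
By the remark after the definition of compatibility, we may shrink the ideals and assume $b\sigma_i(a)=(b)\phi_i a$ for all $a\in A_i$, $b\in B_i$, $i=1,2$. The approach is to write the sums and products out using the formulas preceding Theorem~\ref{equiclass} and their left-sided analogues. We then check the identity $b\,\sigma(a)=(b)\phi\, a$ directly on suitable ideals lying in $T_r$ and $T_l$.

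For (1), the sum on the right is $[A_1\cap A_2,\sigma_1+\sigma_2]$. The left-hand sum is $[B_1\cap B_2,\phi_1+\phi_2]$. Both intersections lie in their respective Martindale sets, since each contains a product of members of the set. For $a\in A_1\cap A_2$ and $b\in B_1\cap B_2$ we get
\[
b(\sigma_1+\sigma_2)(a)=b\sigma_1(a)+b\sigma_2(a)=(b)\phi_1a+(b)\phi_2a=(b)(\phi_1+\phi_2)a,
\]
so the pair is compatible.

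For (2), the right product is $[A_2A_1,\sigma_1\circ\sigma_2]$. For left modules the homomorphisms are written on the right and compose accordingly, so the left product is $[B_1B_2,\phi_1\circ\phi_2]$; here $(b)(\phi_1\circ\phi_2)$ means apply $\phi_1$, then $\phi_2$, matching the product $fg$ in $Q^l$ where $Bfg$ is computed as $(Bf)g$. The compositions make sense because $\sigma_2(A_2A_1)=\sigma_2(A_2)A_1\subseteq RA_1\subseteq A_1$, and symmetrically $(B_1B_2)\phi_1\subseteq B_2$. The main computation is then: for $a\in A_2A_1$ and $b\in B_1B_2$,
\[
b\,\sigma_1(\sigma_2(a)) = (b)\phi_1\,\sigma_2(a) = ((b)\phi_1)\phi_2\, a .
\]
The first equality uses compatibility of pair 1, since $\sigma_2(a)\in A_1$ and $b\in B_1$. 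The second uses compatibility of pair 2, since $(b)\phi_1\in B_2$ and $a\in A_2$.

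The main obstacle is the bookkeeping of which product ideal goes on which side, together with the right/left composition convention for $\phi$. One must make sure that $\sigma_2(a)$ lands in $A_1$ and $(b)\phi_1$ lands in $B_2$ so that each compatibility hypothesis applies. Once the product ideals are chosen as above, with $A_2A_1\in T_r$ and $B_1B_2\in T_l$ by condition (2) of Definition~\ref{MartindaleSet}, both memberships hold and the chain of equalities finishes the proof.
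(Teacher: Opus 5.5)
Your overall strategy and part (1) match the paper's proof. Part (2), however, contains a real error in exactly the bookkeeping you flagged: the domain of the left product must be $B_2B_1$, not $B_1B_2$.

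Your assertion that ``symmetrically $(B_1B_2)\phi_1\subseteq B_2$'' does not hold. The map $\phi_1$ is only left $R$-linear, so $(b_1b_2)\phi_1$ with $b_1\in B_1$, $b_2\in B_2$ cannot be rewritten in any useful way. Here is a concrete counterexample:
\begin{itemize}
\item Let $R=k[x][t;\tau]$ be the skew polynomial domain with $\tau(x)=x+1$, so $tx=(x+1)t$. Take $T_l=\overline T_l$ and $T_r=\overline T_r$, which are all nonzero ideals since $R$ is a domain.
\item Take $B_1=Rt=tR$ with $(rt)\phi_1=r$. This is compatible with $\sigma_1(tr)=r$ on $A_1=tR$.
\item Take $B_2=xk[x]+Rt$, which is a two-sided ideal.
\end{itemize}
Then $tx\in B_1B_2$, yet $(tx)\phi_1=x+1\notin B_2$. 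So $\phi_1\circ\phi_2$ is not even defined on $B_1B_2$. The second equality of your chain invokes compatibility of the second pair at the element $(b)\phi_1$, and it is therefore unjustified.

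The correct mirror of $\sigma_2(a_2a_1)=\sigma_2(a_2)a_1\in A_1$ is
\[
(b_2b_1)\phi_1=b_2\,(b_1)\phi_1\in B_2R\subseteq B_2 .
\]
Equivalently, in $Q^l_{T_l}(R)$ one has $(B_2B_1)g_1g_2\subseteq B_2g_2\subseteq R$. This is why the paper writes $[B_1,\phi_1]\cdot[B_2,\phi_2]=[B_2B_1,\phi_1\phi_2]$.

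Since $B_2B_1\in T_l$ and $B_2B_1\subseteq B_1$, your chain
\[
b\,\sigma_1(\sigma_2(a))=(b)\phi_1\,\sigma_2(a)=((b)\phi_1)\phi_2\,a
\]
goes through verbatim for $a\in A_2A_1$ and $b\in B_2B_1$. With this correction, your argument is the same as the paper's.
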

		\begin{proof}
			Note \([A_1,\sigma_1]+[A_2,\sigma_2]=[A_1\cap A_2,\sigma_1+\sigma_2]\) and
			\([B_1,\phi_1]+[B_2,\phi_2]=[B_1\cap B_2,\phi_1+\phi_2]\). This implies
			\(b(\sigma_1+\sigma_2)(a)=(b)(\phi_1+\phi_2)a\) for any \(a\in A_1\cap A_2,
			b\in B_1\cap B_2\). This implies \([A_1,\sigma_1]+[A_2,\sigma_2]\) and \([B_1,\phi_1] + [B_2,\phi_2]\) are compatible. Thus (1) is true.
			
			Similarly \([A_1,\sigma_1]\cdot [A_2,\sigma_2]=[A_2A_1,\sigma_1\sigma_2]\) and
			\([B_1,\phi_1]\cdot [B_2,\phi_2]=[B_2B_1,\phi_1\phi_2]\). This implies
			\begin{align*}
			&b_2b_1(\sigma_1\sigma_2)(a_2a_1)=b_2b_1\sigma_1(\sigma_2(a_2a_1))=
			(b_2b_1)\phi_1\sigma_2(a_2a_1)\\
			&=(b_2(b_1)\phi_1)\sigma_2(a_2a_1)=
			(b_2(b_1)\phi_1)\phi_2)a_2a_1 =(b_2b_1)(\phi_1\phi_2)a_2a_1
			\end{align*}
			for any \(a_i\in A_i,
			b_i\in B_i\), \(i=1,2\). Therefore \(b(\sigma_1\sigma_2)(a)=(b)(\phi_1\phi_2)a\) for any
			\(a\in A_2A_1,b\in B_2B_1\). Case (2) follows at once.
		\end{proof}

		\begin{lemma}\label{compatibles}
			Suppose \(f\in Q^r_{T_r}(R)\) and  \(g\in Q^l_{T_l}(R)\). Then
			\begin{enumerate}
				\item[(1)] \(f\) and \(g\) are compatible if and only if there are \(A\in T_r, B\in T_l\) such that \(fA\cup Bg \subseteq R, bf=bg\) and \(fa=ga\) for any \(a\in A, b\in B\).
				\item[(2)] If \(g\) is left  compatible of \(f\), then \(g\) is uniquely determined. 
				\item[(3)] Suppose \(F\) is a subring of \(Q^r_{T_r}(R)\) and \(\chi : F \rightarrow Q^l_{T_l}(R)\) such that \(\chi(f) = e\) whenever \(f\) and \(e\)  are compatible. Then \(\chi\) is a ring embedding.
			\end{enumerate}
		\end{lemma}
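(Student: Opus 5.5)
The plan is to translate the equivalence-class description of compatibility into statements about actual elements of \(Q^r_{T_r}(R)\subseteq Q^r_{\max}(R)\) and \(Q^l_{T_l}(R)\subseteq Q^l_{\max}(R)\), and to settle every identity using the non-degeneracy conditions (MR.2) and (ML.2) from Proposition~\ref{propMs} and its left analogue. Throughout, I take \(A\in T_r\) and \(B\in T_l\) with \(fA\subseteq R\) and \(Bg\subseteq R\). As remarked after the definition, I may assume compatibility holds on all of \(A\) and \(B\), i.e.\ \(b(fa)=(bg)a\) for all \(a\in A\), \(b\in B\). One subtlety must be fixed first. The expression \(bf\) is a priori an element of \(Q^r_{\max}(R)\), whereas \(bg\in R\); similarly \(fa\in R\) while \(ga\in Q^l_{\max}(R)\). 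So "\(bf=bg\)" will be read as an equality in \(Q^r_{\max}(R)\), via \(R\subseteq Q^r_{\max}(R)\), and "\(fa=ga\)" as an equality in \(Q^l_{\max}(R)\).

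For (1), direction (\(\Rightarrow\)): fix \(b\in B\). The element \(bf-bg\in Q^r_{\max}(R)\) satisfies \((bf-bg)a=b(fa)-(bg)a=0\) for all \(a\in A\), by compatibility. Since \(A\) is a dense right ideal, this forces \(bf=bg\); this is (MR.2) applied inside \(Q^r_{\max}(R)\), i.e.\ in the \(\overline T\)-extension. Symmetrically, for fixed \(a\in A\) we have \(B(fa-ga)=0\) in \(Q^l_{\max}(R)\), so \(fa=ga\) by (ML.2). Direction (\(\Leftarrow\)) is immediate: \(b(fa)=(bf)a=(bg)a\), which is exactly compatibility on \(A\) and \(B\). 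I expect this part to be the only delicate point, because it requires care about which maximal quotient ring each product lives in and that associativity is used there.

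For (2), suppose \(g,g'\in Q^l_{T_l}(R)\) are both left compatibles of \(f\), witnessed by \((A,B)\) and \((A',B')\) respectively. By (1), for \(b\in B\cap B'\in T_l\) we get \(bg=bf=bg'\). Hence \((B\cap B')(g-g')=0\), and (ML.2) gives \(g=g'\). Alternatively, this follows from Lemma~\ref{compatibleLemma}(1) applied with \((A_1,\sigma_1)=(A_2,\sigma_2)\).

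For (3), the map \(\chi\) is well defined by (2), given the standing hypothesis that each \(f\in F\) has a compatible \(e\). I then check the ring-homomorphism properties. Lemma~\ref{AddMul} shows that \(\chi(f_1)+\chi(f_2)\) is compatible with \(f_1+f_2\), and that \(\chi(f_1)\chi(f_2)\) is compatible with \(f_1f_2\). Uniqueness from (2) then yields \(\chi(f_1+f_2)=\chi(f_1)+\chi(f_2)\) and \(\chi(f_1f_2)=\chi(f_1)\chi(f_2)\). Moreover \(1\) is compatible with \(1\), taking \(A=B=R\), so \(\chi\) preserves unity. For injectivity, suppose \(\chi(f)=0\). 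Then \(b(fa)=0\) for all \(a\in A\), \(b\in B\), so \(B(fa)=0\) with \(fa\in R\). Since \(B\) is an ideal that is left dense, \({\bf r}_R(B)=0\), and therefore \(fa=0\) for every \(a\in A\). Thus \(fA=0\), and (MR.2) gives \(f=0\). Hence \(\chi\) is a ring embedding.
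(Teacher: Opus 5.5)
Your proposal is correct and follows the paper's own argument. Density of \(A\) (resp.\ \(B\)) turns \((bf-bg)A=0\) and \(B(fa-ga)=0\) into \(bf=bg\) and \(fa=ga\) for (1), uniqueness of the left compatible gives (2), and Lemma~\ref{AddMul} combined with (2) gives (3); beyond that, you only make explicit what the paper leaves tacit (in which maximal quotient ring each equality is read, that \(1\) is compatible with \(1\), and injectivity of \(\chi\) via \({\bf r}_R(B)=0\)), and your direct proof of (2) via (1) and (ML.2) simply unpacks the paper's appeal to Lemma~\ref{compatibleLemma}.
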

		\begin{proof} Assume
			\(f\) and \(g\) are compatible. Then
			\((bf-bg)A=0\) for any \(b\in B\in T_l\) and \(A\in T_r\). Furthermore \(fA\cup Bg\subseteq R\). This implies \(bf=bg\) for any \(b\in B\) since \(A\) is right dense. Similarly \(fa=ga\) for any \(a\in A\). The rest of (1) follows easily. The second assertion (2) is an immediate consequence of Lemma~\ref{compatibleLemma}. Finally,  (3) is a consequence of Lemma~\ref{AddMul} and (2).
		\end{proof}

		\begin{theorem}\label{QSIso}
			The following rings are isomorphic over \(R\).
			\begin{enumerate}
				\item[(1)] \(F=\left\{f\in Q^r_{T_r}(R) \middle|  \exists B\in T_l \text{ such that } Bf\subseteq R\right\}\).
				\item[(2)] \(G=\left\{g\in Q^l_{T_l}(R) \middle| \exists A\in T_r  \text{ such that } gA\subseteq R\right\}\).
			\end{enumerate}
		\end{theorem}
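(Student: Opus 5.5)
The plan is to build the isomorphism explicitly from the compatibility map of Lemma~\ref{compatibles}(3). I will define \(\chi : F\rightarrow Q^l_{T_l}(R)\) by \(\chi(f)=g\), where \(g\) is a left compatible of \(f\), and show that \(\chi\) is well-defined with image exactly \(G\).

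The first step is to check that every \(f\in F\) has a left compatible. Choose \(A\in T_r\) and \(B\in T_l\) with \(fA\subseteq R\) and \(Bf\subseteq R\). Define \(\phi\in Hom_R({_R}B,{_R}R)\) by \((b)\phi=bf\); this is left \(R\)-linear because \((rb)f=r(bf)\) in \(Q^r_{\max}(R)\). By (ML.3), applied to the \(T_l\)-extension \(Q^l_{T_l}(R)\) (the left analogue of Proposition~\ref{propMs}), there is \(g\in Q^l_{T_l}(R)\) with \(bg=bf\) for all \(b\in B\).

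To confirm compatibility, I need \(bfa=bga\) for \(a\in A\) and \(b\in B\). This holds immediately, since \(bga=(bf)a=b(fa)\) by associativity inside \(Q^r_{\max}(R)\); the products \(bg\) and \(bf\) are the same element of \(R\). Here one must be careful that \(bga\) is computed in \(Q^l_{\max}(R)\) and \(bfa\) in \(Q^r_{\max}(R)\), but both are evaluations of the same elements of \(R\), so the values agree. Uniqueness of \(g\) is Lemma~\ref{compatibles}(2), so \(\chi\) is well-defined. Moreover \(gA\subseteq R\): by Lemma~\ref{compatibles}(1), after shrinking \(A\) and \(B\) we have \(ga=fa\in R\) for \(a\in A\). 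Hence \(\chi(f)\in G\).

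Next, \(F\) is a subring of \(Q^r_{T_r}(R)\). For closure under products, use \(B_1B_2\in T_l\), so that \(B_1B_2f_1f_2\subseteq B_2f_1f_2\); this needs care, and I would use \((b_1b_2)f_1f_2\) with the order \(B_2B_1\), mirroring the argument at the start of Section 3. Given that, Lemma~\ref{compatibles}(3) makes \(\chi\) a ring embedding. It fixes \(R\), since \(r\) is compatible with itself.

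For surjectivity onto \(G\), I run the symmetric construction. Given \(g\in G\), use (MR.3) to obtain \(f\in Q^r_{T_r}(R)\) with \(fa=ga\) on \(A\). The same computation shows that \(f\) and \(g\) are compatible, and the left-right symmetric version of Lemma~\ref{compatibles}(1) gives \(Bf=Bg\subseteq R\), so \(f\in F\) and \(\chi(f)=g\).

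The main obstacle is the bookkeeping with triple products \(bfa\) that live in different overrings. I would handle it by always reducing to evaluations of the homomorphisms on the ideals, working with the classes \([A,\sigma]\) and \([B,\phi]\) via Theorem~\ref{equiclass}, rather than multiplying elements of \(Q^r_{\max}(R)\) and \(Q^l_{\max}(R)\) together.
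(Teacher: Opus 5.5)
Your proposal is correct and follows the paper's own argument: define \(\phi\) on \(B\) by \((b)\phi=bf\), take the resulting \(g\in Q^l_{T_l}(R)\), check compatibility and \(gA\subseteq R\), and then invoke Lemma~\ref{compatibles}. You also spell out two points the paper leaves implicit: closure of \(F\) under products, where the working order is \(B_2B_1\) since \((B_2B_1)f_1f_2\subseteq B_2Rf_2\subseteq B_2f_2\subseteq R\), and surjectivity onto \(G\) via the mirror construction with (MR.3).
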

		\begin{proof}It is clear that both \(F\) and \(G\) are  extension rings of \(R\).
			For each \(f\in F\), there are \(A\in T_r, B\in T_l\) such that \(fA\cup Bf \subseteq R\). Define \(\phi \in Hom_R({_R}B,{_R}R)\) by \((b)\phi= bf\). Recall \([B,\phi]\) can be viewed as an element of   \(Q^l_{T_l}(R)\).  Note that \((b)\phi=bg\) for any \(b\in B\) where \([B,\phi]\) is identified as \(g\in Q^l_{T_l}(R)\). Thus \(bga=bfa\) for any \(a\in A, b\in B\). We then have \(f\) and \(g\) are compatible. Furthermore, \(B(fa-ga)=0\) for any \(a\in A\). Thus \(fa=ga\) since \(B\) left dense. This forces \(g\in G\). Lemma~\ref{compatibles} shows that there is a ring isomorphism between \(F\) and \(G\) that maps an element in \(F\) to its compatible counterpart in \(G\), and the isomorphism fixes elements of \(R\). Hence \(F\) and \(G\) are isomorphic over \(R\).
		\end{proof}
		
		As noted in the theorem above there is a special subring of \(Q^r_{T_r}(R)\) which is isomorphic to a subring of \(Q^l_{T_l}(R)\) over \(R\). 
		
		\begin{definition}Given \(T_l\in \mathbb{M}_R^l, T_r\in \mathbb{M}_R^r\), we  define
			\[^{}_{T_l}Q^s_{T_r}(R) =\left\{f\in Q^r_{T_r}(R)\middle |  \exists B\in T_l \text{ such that } Bf\subseteq R\right\}.\] Or equivalently \[^{}_{T_l}Q^s_{T_r}(R) =
			\left\{g\in Q^l_{T_l}(R) \middle| \exists A\in T_r  \text{ such that } gA\subseteq R\right\}.\]
			We call \(^{}_{T_l}Q^s_{T_r}(R)\) a {\em \(\langle T_l,T_r\rangle\)-Martindale symmetric ring of quotients} or simply a {\em sub-Martindale symmetric ring of quotients}.
			
			If \(\overline T_l\) and \(\overline T_r\) are the unique maximal elements of \(\mathbb{M}_R^l\) and  \(\mathbb{M}_R^r\), respectively. We call {\em \(^{}_{\overline T_l}Q^s_{\overline T_r}(R)\) the maximal
			sub-Martindale symmetric ring of quotients}. In this case, we simply write \(Q^s(R)\) instead.
		\end{definition}

	  When \(R\) is semiprime, its
		maximal sub-Martindale symmetric ring of quotients is the well-known Martindale symmetric ring of quotients. Thus every ring has a unique largest sub-Martindale symmetric ring of quotients. We also note that 
\({}_{\{R\}}Q^s_{T_r}(R)={{}_{T_l}}Q^s_{\{R\}}(R)=R\).

		\begin{lemma}Suppose \(R\) is simple, (resp.\ prime, semiprime). Then so are \(Q^r_{T_r}(R)\), \(Q^l_{T_l}(R)\) and \(^{}_{T_l}Q^s_{T_r}(R)\).
		\end{lemma}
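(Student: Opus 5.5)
Let \(Q\) denote any one of \(Q^r_{T_r}(R)\), \(Q^l_{T_l}(R)\) or \({}_{T_l}Q^s_{T_r}(R)\). The single fact I will use is a \emph{reduction principle}: for every nonzero \(q\in Q\) there is an ideal \(D\) of \(R\) with \(qD\subseteq R\) (or \(Dq\subseteq R\)), \(qD\neq 0\) (resp.\ \(Dq\neq 0\)), and \(D\) dense on the appropriate side. For \(Q^r_{T_r}(R)\), pick \(A\in T_r\) with \(qA\subseteq R\); (MR.2) gives \(qA\neq 0\). The left version uses (ML.2). For the symmetric ring, which sits inside \(Q^r_{T_r}(R)\) (and also inside \(Q^l_{T_l}(R)\)), the right version suffices. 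Every property will be transferred along this reduction. I treat only \(Q^r_{T_r}(R)\); the left case is symmetric, and the symmetric ring inherits everything because it is a subring of \(Q^r_{T_r}(R)\) containing \(R\) and each \(q\) in it admits \(A\in T_r\) with \(qA\subseteq R\).

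\emph{Semiprime.} Let \(0\neq q\in Q\) and suppose \(qQq=0\). Choose \(A\in T_r\) with \(qA\subseteq R\); then \(qA\neq 0\). Because \(A\) is an ideal, \((qA)R(qA)\subseteq qQq A=0\). Hence the right ideal \(qA\) of \(R\) satisfies \((qAR)^2=0\), so semiprimeness of \(R\) forces \(qA=0\), a contradiction. \emph{Prime.} Suppose \(q_1Qq_2=0\) with \(q_1,q_2\neq 0\). Choose \(A_1,A_2\in T_r\) with \(q_iA_i\subseteq R\) and \(q_iA_i\neq 0\). Then \((q_1A_1)R(q_2A_2)\subseteq q_1Qq_2A_2=0\), so primeness gives \(q_1A_1=0\) or \(q_2A_2=0\), again a contradiction. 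The only point to check is that \(A_1R q_2\subseteq Q\), which holds trivially.

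\emph{Simple.} If \(R\) is simple, its only nonzero ideal is \(R\) itself. Thus every Martindale set is \(\{R\}\), since \(0\) is not dense in the nonzero ring \(R\). Then \(Q^r_{\{R\}}(R)=\{q: qR\subseteq R\}=R\), and likewise all three rings equal \(R\), which is simple. If one wishes to avoid this shortcut, the alternative is to take a nonzero ideal \(J\) of \(Q\) and \(0\neq q\in J\). Then \(qA\) is a nonzero subset of \(J\cap R\), so \(J\cap R\) is a nonzero ideal of \(R\), hence contains \(1\), and so \(J=Q\).

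The main (mild) obstacle is ensuring \(qA\neq 0\) and that the products land in the right places; (MR.2)/(ML.2) handle the former and the ideal property of members of \(T_r\) handles the latter. For the symmetric ring I must also confirm that \(qQq=0\) computed in the smaller ring still allows the argument. It does, because \(R\subseteq {}_{T_l}Q^s_{T_r}(R)\) and only products \(qAR\,qA\) with \(R\)-elements in the middle are used.
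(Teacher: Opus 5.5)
Your proof is correct, but it follows a different route from the paper's. The paper proves the lemma in one line by citing the general fact (Lam, p.~380) that every right ring of quotients of a simple (resp.\ prime, semiprime) ring is again simple (resp.\ prime, semiprime). That fact applies because \(Q^r_{T_r}(R)\) and \({}_{T_l}Q^s_{T_r}(R)\) lie between \(R\) and \(Q^r_{\max}(R)\), and \(Q^l_{T_l}(R)\) is a left ring of quotients.

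You instead give a self-contained argument. It is essentially the standard proof of the cited fact, specialized to this setting: the Martindale ideal \(A\in T_r\), with \(qA\subseteq R\) and \(qA\neq 0\) by (MR.2), plays the role of the element \(r\) with \(0\neq qr\in R\) that density supplies in general. The ideal property of \(A\) is not really needed, since a single \(a\in A\) with \(qa\neq 0\) and \((qa)R(qa)\subseteq qQqa=0\) already suffices.

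The trade-off is that the citation is shorter and covers every intermediate ring at once. Your version uses only (MR.1)--(MR.2) and the inclusion \(R\subseteq Q\), so it makes explicit why the symmetric ring needs no separate treatment.

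For the simple case, your observation that \(\{R\}\) is the only Martindale set, so all three rings collapse to \(R\), is exactly the paper's next lemma. It gives a stronger conclusion than simplicity. Your fallback argument via \(J\cap R\) is the general right-ring-of-quotients proof.
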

		\begin{proof}
			The lemma follows from the fact that if \(R\) is simple, (resp.\ prime, semiprime), so is every right ring of quotient of 	\(R\). (See \cite[p.380]{Lam99}.)
		\end{proof}
		
		\begin{lemma} Suppose \(R\) is simple. Then \(Q^r(R)=Q^l(R)=Q^s(R)=R\).
		\end{lemma}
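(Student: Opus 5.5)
Since \(R\) is simple, the only ideals of \(R\) are \(0\) and \(R\). The zero ideal is not dense on either side, because \({\bf l}_R(0)=R\neq 0\) and \({\bf r}_R(0)=R\neq 0\) (as \(R\) has a unity). On the other hand, \({\bf l}_R(R)={\bf r}_R(R)=0\), again using the unity. By the criterion quoted from Passman, an ideal is a dense right (resp. left) ideal exactly when its left (resp. right) annihilator vanishes. Hence the largest right Martindale set is \(\overline T_r=\{R\}\), and likewise \(\overline T_l=\{R\}\). This is in fact the only right (resp. left) Martindale set, since every Martindale set is nonempty and contains \(R\).

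With this in hand, \(Q^r(R)=Q^r_{\{R\}}(R)\) consists of those \(q\in Q^r_{\max}(R)\) with \(qR\subseteq R\). For such \(q\) we have \(q=q\cdot 1_R\in R\), so \(Q^r(R)\subseteq R\). The reverse inclusion is trivial, giving \(Q^r(R)=R\). The same argument, using \(Rg\subseteq R\) and \(g=1_R g\), shows \(Q^l(R)=R\).

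For the symmetric ring, \(Q^s(R)={}_{\{R\}}Q^s_{\{R\}}(R)\). This equals \(R\) by the remark already made in the paper that \({}_{\{R\}}Q^s_{T_r}(R)=R\). Alternatively, \(Q^s(R)\) is a subring of \(Q^r(R)=R\) containing \(R\), so it equals \(R\).

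There is no real obstacle here. The only point needing care is the identification \(\overline T_r=\overline T_l=\{R\}\), which rests on the zero ideal failing to be dense. That holds because \(R\neq 0\): it has a unity, and a simple ring is nonzero by convention.
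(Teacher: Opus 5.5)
Your proof is correct and takes the same approach as the paper. The paper's proof just observes that \(\{R\}\) is the only right (or left) Martindale set of a simple ring. You supply the details it omits: the zero ideal is not dense, \(qR\subseteq R\) forces \(q=q\cdot 1_R\in R\) (and dually on the left), and \(Q^s(R)\) lies between \(R\) and \(Q^r(R)=R\).
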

		\begin{proof}The result follows easily from the fact that \(\{R\}\) is the only right (or left) Martindale set of \(R\).
		\end{proof}
		
		\begin{lemma}
			Suppose \(R\) is a domain. Then so are  \(^{}_{T_l}Q^s_{T_r}(R)\) and \(Q^s(R)\).
		\end{lemma}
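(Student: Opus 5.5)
We want to show that if \(R\) is a domain, then so is \({}_{T_l}Q^s_{T_r}(R)\); the statement for \(Q^s(R)\) is the special case \(T_l=\overline T_l\), \(T_r=\overline T_r\). Set \(F={}_{T_l}Q^s_{T_r}(R)\). It is a nonzero ring with unity, since it contains \(R\), so the task is to rule out zero divisors. The key feature we will use is that every element of \(F\) can be pushed into \(R\) by multiplying on either side: on the right by some ideal in \(T_r\), or on the left by some ideal in \(T_l\).

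Let \(f,g\in F\) with \(fg=0\), and suppose \(g\neq 0\); we aim to show \(f=0\).

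First, move \(g\) into \(R\). Choose \(A\in T_r\) with \(gA\subseteq R\). Since \(A\) is a dense right ideal, (MR.2) applied to \(g\in Q^r_{T_r}(R)\) gives \(gA\neq 0\). So there is \(a\in A\) with \(0\neq ga\in R\).

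Next, move \(f\) into \(R\) from the left. Choose \(B\in T_l\) with \(Bf\subseteq R\). For every \(b\in B\) we have
\[(bf)(ga)=b(fg)a=0.\]
Both factors \(bf\) and \(ga\) lie in \(R\), and \(R\) is a domain with \(ga\neq 0\). Hence \(bf=0\) for all \(b\in B\), that is, \(Bf=0\).

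Finally, conclude \(f=0\). Here we use the left-hand description: by Theorem~\ref{QSIso}, \(F\) is identified with a subring of \(Q^l_{T_l}(R)\). Under this identification the element \(bf\) is the same in both pictures, because compatibility (Lemma~\ref{compatibles}(1)) gives \(bf=bg'\) for \(b\) in a suitable member of \(T_l\), where \(g'\) is the compatible counterpart. Then (ML.2) applied to \(Bf=0\) yields \(f=0\).

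Alternatively, one can stay inside \(Q^r_{\max}(R)\) and argue directly. The set \(B\) is an ideal that is left dense, so \({\bf r}_R(B)=0\). Take any \(a'\in A'\in T_r\) with \(fa'\in R\). Then \(B(fa')=0\) with \(fa'\in R\), so \(fa'=0\). This gives \(fA'=0\), and (MR.2) forces \(f=0\).

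The only delicate point is this last step: moving between the left and right pictures of \(f\). The direct argument avoids the identification entirely, using only that \(B\) is a two-sided ideal with \({\bf r}_R(B)=0\). Since \(R\) is a domain, this is immediate anyway: pick any \(0\neq b\in B\); then \(b(fa')=0\) implies \(fa'=0\). So I expect no real obstacle, and the argument is symmetric with the roles of the two sides exchanged.
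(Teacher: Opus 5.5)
Your proof is correct and is essentially the paper's argument: both of you write \(0=b(fg)a=(bf)(ga)\) with \(bf,ga\in R\), use that \(R\) is a domain, and then kill \(f\) (or \(g\)) by density; your ``assume \(g\neq 0\)'' framing is just a rephrasing of the paper's case split. Your direct closing step (\(B(fA')=0\) with \(fA'\subseteq R\), so \(fA'\subseteq \mathbf{r}_R(B)=0\), then (MR.2)) is exactly what the paper's terse ``since \(B\) is left dense'' means, as in its verification of (MS.2), and it is the version to keep rather than the detour through Theorem~\ref{QSIso}.
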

		\begin{proof} Suppose \(R\) is a domain.
			Let \(f,g\in {^{}_{T_l}Q^s_{T_r}(R)}\) such that \(fg=0\). There are \(A\in T_r,B\in T_l\)
			with \(gA\cup Bf \subseteq R\). This implies \(bfga=0\) for any \(a\in A,b\in B\).
			Since \(bf,ga\in R\), we must have \(bf=0\) or \(ga=0\). If \(Bf=0\), we have \(f=0\) since \(B\) is left dense. Assume
			\(bf\neq 0\) for some \(b\in B\). We must have \(gA=0\) and thus \(g=0\). Hence \(^{}_{T_l}Q^s_{T_r}(R)\). In a similar token, \(Q^s(R)\) is a domain.
		\end{proof}

		\begin{definition} Suppose \(F\) is an extension ring of \(R\).
		We say \(F\) is a {\em \(\langle T_l,T_r\rangle\)-extension of \(R\)} where \(T_l\in \mathbb{M}_R^l\) and  \(T_r\in \mathbb{M}_R^r\) if the following
		conditions are true.
		\begin{description}
 \item[(MS.1)]  If \(f\in F\), then there are \(A\in T_r\) and \(B\in T_l\)
			such that \(fA \cup Bf\subseteq R\).
			\item[(MS.2)]  If \(f\in F\), \(A\in T_r\), and \(B\in T_l\), then \(f=0\) whenever
			\(fA=0\) or \(Bf=0\).
			\item[(MS.3)]  If \(\sigma\in Hom_R(A_R,R_R)\) and \(\phi\in Hom_R({_R}B,{_R}R)\) where 
	\(A\in T_r\) and \(B\in T_l\)		
			such that \(b\sigma (a)=(b)\phi a\) for any
			\(a\in A, b\in B\), then there is an \(f\in F\) such that \(\sigma (a)=fa\)
			and \((b)\phi =bf\) for any \(a\in A\) and \(b\in B\).
		\end{description}
		\end{definition}
		Note that condition (MS.3) implies \([A,\sigma]\) and \([B,\phi]\) are
		compatible.
		
		\begin{theorem}
			Let \(R\) be a subring of \(F\). Then \(F\) is isomorphic to \({^{}_{T_l}Q^s_{T_r}}(R)\) over \(R\)
			if and only if \(F\) is a \({\langle} T_l,T_r\rangle\)-extension of \(R\).
		\end{theorem}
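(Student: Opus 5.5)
I would model the argument on the proof of Theorem~\ref{Textension}, working with the description \(Q={}_{T_l}Q^s_{T_r}(R)=\{f\in Q^r_{T_r}(R)\mid Bf\subseteq R \text{ for some } B\in T_l\}\), viewed inside \(Q^r_{\max}(R)\).

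\emph{Forward direction.} I would first show that \(Q\) itself is a \(\langle T_l,T_r\rangle\)-extension. The conditions then transfer to any \(F\) isomorphic to \(Q\) over \(R\), because such an isomorphism fixes \(R\) and hence preserves every condition phrased through products with elements of \(R\).
\begin{itemize}
\item (MS.1) is immediate from the definition of \(Q\).
\item For (MS.2), suppose \(fA=0\) with \(A\in T_r\). Then \(f=0\) by (MR.2) for \(Q^r_{T_r}(R)\). Suppose instead \(Bf=0\). Choose \(A\in T_r\) with \(fA\subseteq R\); then \(B(fa)=0\) for each \(a\in A\). Since \(B\) is a dense left ideal, \({\bf r}_R(B)=0\), so \(fA=0\) and again \(f=0\).
\item For (MS.3), take \(\sigma,\phi\) with \(b\sigma(a)=(b)\phi a\). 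By (MR.3) there is \(f\in Q^r_{T_r}(R)\) with \(fa=\sigma(a)\) on \(A\). For \(b\in B\) we then have \((bf-(b)\phi)a=b\sigma(a)-(b)\phi a=0\) for all \(a\in A\). Since \(bf-(b)\phi\in Q^r_{T_r}(R)\), (MR.2) gives \(bf=(b)\phi\). Hence \(Bf\subseteq R\), so \(f\in Q\) and \(f\) realizes both maps.
\end{itemize}

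\emph{Converse.} Let \(F\) be a \(\langle T_l,T_r\rangle\)-extension. I would define \(\Psi:F\to Q^r_{T_r}(R)\) by \(f\mapsto[A,\sigma_f]\), using Theorem~\ref{equiclass}, where by (MS.1) we have \(fA\cup Bf\subseteq R\) and \(\sigma_f(a)=fa\). Exactly as in Theorem~\ref{Textension}:
\begin{itemize}
\item \(\Psi\) is well defined, since two choices of \(A\) agree on their intersection.
\item \(\Psi\) is additive and multiplicative, using \(fBA\)-type products with \(A_2A_1\in T_r\), and it fixes \(R\).
\item \(\Psi\) is injective by (MS.2): if \([A_1,\sigma_{f_1}]=[A_2,\sigma_{f_2}]\), then \((f_1-f_2)(A_1\cap A_2)=0\).
\end{itemize}
To see that the image lies in \(Q\), let \(q=\Psi(f)\) and pair it with \(g=[B,\phi_f]\in Q^l_{T_l}(R)\), where \((b)\phi_f=bf\). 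Associativity in \(F\) gives \(b(fa)=(bf)a\), so \(q\) and \(g\) are compatible. By Lemma~\ref{compatibles}(1), compatibility yields \(bq=bg\in R\) for \(b\) in some member of \(T_l\), hence \(q\in Q\).

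Surjectivity onto \(Q\) is the step I expect to need the most care. Take \(q\in Q\) with \(qA\cup Bq\subseteq R\). Define \(\sigma(a)=qa\) and \((b)\phi=bq\); associativity in \(Q^r_{\max}(R)\) gives \(b\sigma(a)=(b)\phi a\). Then (MS.3) yields \(f\in F\) with \(fa=qa\) for all \(a\in A\), so \(\Psi(f)=[A,\sigma]=q\). The subtle point is the bookkeeping that the \(A\) chosen in the definition of \(\Psi\) may be replaced by any smaller member of \(T_r\). This is handled by the equivalence relation \(\sim\) together with Lemma~\ref{subdenseidealhom}, and by the fact that \(T_r\) and \(T_l\) are closed under intersections, since an intersection contains the product of the two ideals.
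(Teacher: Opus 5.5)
Your proposal is correct and follows the paper's proof. The forward direction checks (MS.1)--(MS.3) for \({}_{T_l}Q^s_{T_r}(R)\) with the same arguments as the paper: \(fA\subseteq {\bf r}_R(B)=0\) for (MS.2), and (MR.3) plus right density of \(A\) for (MS.3). The converse uses the same map \(f\mapsto[A,\sigma_f]\) modelled on Theorem~\ref{Textension}. You also supply details the paper leaves to ``similar to the proof of Theorem~\ref{Textension}'', namely that the image lands in the symmetric ring and that (MS.3) gives surjectivity, and your choice of \(A\) with \(fA\subseteq R\) in (MS.2) makes explicit a hypothesis the paper uses only tacitly.
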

		\begin{proof} We assume \(\sigma \in Hom_R(A_R,R_R)\), \(\phi\in Hom_R({_R}B,{_R}R)\) where \(A\in T_r, B\in T_l\).
			
			\((\Rightarrow)\) Without loss of generality, we can assume \(F= {^{}_{T_l}Q^s_{T_r}}(R)\). Thus
			(MS.1) is obvious.   For (MS.2), we assume \(f\in F\). If \(fA=0\), we have \(f=0\) since   \({^{}_{T_l}Q^s_{T_r}}(R)\subseteq Q^r_{\max}(R)\). Now suppose
			\(Bf=0\). So we have \(BfA=0\). Since \(B\in T_l\), we have \(fA\subseteq {\bf r}_R(B)=0\). 
			This implies \(fA=0\), and thus \(f=0\). So (MS.2) is true. Now assume \(b\sigma (a)=(b)\phi a\) for any \(a\in A,b\in B\). This
			implies that there is an \(f\in Q^r_{T_r}(R)\) such that \(\sigma (a)= fa\) for any
			\(a\in A\). Therefore \((b)\phi a=b\sigma (a)= bf a\) for any \(a\in A,b\in B\).
			This forces \(((b)\phi - bf)A=0\). Again \(A\in T_r\) implies \((b)\phi = bf\) for
			any \(b\in B\). Since \((B)\phi \subseteq R\), we have \(Bf\subseteq R\), i.e.\
			\(f\in {^{}_{T_l}}Q^s_{T_r}(R)\). We have (MS.3).
			
			\((\Leftarrow)\) Assume \(F\) is a \({\langle} T_l,T_r\rangle\)-extension of \(R\).
			For any \(f\in F\), there are \(A\in T_r\) and \(B\in T_l\) such that \(fA\cup Bf\subseteq R\) by (MS.1). Suppose \(\sigma_f \in Hom_R(A_R,R_R),
			\phi_f \in Hom_R({_R}B,{_R}R)\) where \(\sigma_f(a)=fa, (b)\phi_f=bf\) for any \(
			a\in A, b\in B\). 
			Let \(\alpha : F\rightarrow {^{}_{T_l}Q^s_{T_r}}(R)\) such that \(\alpha (f)=\left[A,\sigma_f \right]\).
			Similar to the proof of Theorem~\ref{Textension}, we can show the map \(\alpha\) is a ring isomorphism that fixes \(R\).
		\end{proof}
		
A bijection \(\tau :R\rightarrow S\) is a ring  anti-isomorphism if \(\tau(a+b)=\tau(a)+\tau(b)\), \(\tau(ab)=\tau(b)\tau(a)\), and it preserves the unity. Thus $S$ is isomorphic to $R^{op}$ the opposite ring of $R$. It is well-known that for any ring $R$, there is a cannonical anti-isomorphism between \(Q^r_{\max}(R)\) and \(Q^l_{\max}(R^{op})\). 
		\begin{lemma}
			Suppose \(\tau : R\rightarrow S\) is a ring anti-isomorphism. Define
			\(T^{(\tau)}=\left\{\tau(I)\middle|I \in T\right\}\) whenever
			\(T\in \mathbb{M}^r_R \cup \mathbb{M}^l_R\).
			Then \(\tau\) extends uniquely to a ring anti-isomorphism for each of the following.
			\begin{enumerate}
				\item[(1)] \(Q^r_{T_r}(R)\rightarrow Q^l_{T_r^{(\tau)}}(S)\).
				\item[(2)] \({^{}{_{T_l}}}Q^s{_{T_r}}(R)\rightarrow {^{}_{T_r^{(\tau)}}}Q^s_{T_l^{(\tau)}}(S)\).
			\end{enumerate}
		\end{lemma}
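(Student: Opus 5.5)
The plan is to reduce everything to the canonical anti-isomorphism \(Q^r_{\max}(R)\to Q^l_{\max}(R^{op})\), composed with the isomorphism \(R^{op}\cong S\) induced by \(\tau\). This gives an anti-isomorphism \(\hat\tau: Q^r_{\max}(R)\to Q^l_{\max}(S)\) extending \(\tau\). Concretely, using the description of elements by equivalence classes (Theorem~\ref{equiclass}), \(\hat\tau\) sends \([A,\sigma]\) to \([\tau(A),\phi]\), where \((\tau(a))\phi=\tau(\sigma(a))\).

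First I check that \(T^{(\tau)}\) is a Martindale set of the opposite side. Since \(\tau\) reverses products, \(\tau(A)\) is an ideal of \(S\) whenever \(A\unlhd R\). Moreover \({\bf l}_R(A)=0\) if and only if \({\bf r}_S(\tau(A))=0\), so right dense ideals go to left dense ideals and conversely. Inclusions are preserved, and \(\tau(AB)=\tau(B)\tau(A)\). Therefore \(T_r^{(\tau)}\in\mathbb{M}^l_S\) and \(T_l^{(\tau)}\in\mathbb{M}^r_S\).

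For (1), I restrict \(\hat\tau\) to \(Q^r_{T_r}(R)\). If \(qA\subseteq R\) with \(A\in T_r\), then \(\tau(A)\hat\tau(q)=\hat\tau(qA)\subseteq S\). This uses that \(\hat\tau\) is multiplicative-reversing on all of \(Q^r_{\max}(R)\) and restricts to \(\tau\) on \(R\). Hence \(\hat\tau(q)\in Q^l_{T_r^{(\tau)}}(S)\). The same argument applied to \(\hat\tau^{-1}\) gives surjectivity, and injectivity is inherited.

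Alternatively, I can avoid quoting the canonical anti-isomorphism and argue directly with Theorem~\ref{Textension} and its left analogue. I define the map on classes \([A,\sigma]\mapsto[\tau(A),\phi]\) and verify that it is well defined, additive, and reverses products: \([A_1,\sigma_1][A_2,\sigma_2]=[A_2A_1,\sigma_1\sigma_2]\) maps to \([\tau(A_1)\tau(A_2),\phi_2\phi_1]\), with the right-hand functions written on the right. Well-definedness uses \(\tau(A\cap B)=\tau(A)\cap\tau(B)\).

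For uniqueness, suppose \(\psi\) is any anti-isomorphism extending \(\tau\). Given \(q\) with \(qA\subseteq R\), we have \(\psi(a)\psi(q)=\psi(qa)=\tau(qa)=\tau(a)\hat\tau(q)\) for \(a\in A\). Thus \(\tau(A)(\psi(q)-\hat\tau(q))=0\), and (ML.2) forces \(\psi(q)=\hat\tau(q)\).

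For (2), take \(f\in{}_{T_l}Q^s_{T_r}(R)\) with \(fA\cup Bf\subseteq R\). Then \(\tau(A)\hat\tau(f)\subseteq S\) and \(\hat\tau(f)\tau(B)\subseteq S\). Thus the image lies in the left-description of \({}_{T_r^{(\tau)}}Q^s_{T_l^{(\tau)}}(S)\), namely elements of \(Q^l_{T_r^{(\tau)}}(S)\) that multiply some member of \(T_l^{(\tau)}\) into \(S\) on the right. Surjectivity comes from the symmetric argument with \(\tau^{-1}\), and uniqueness follows from (1) together with (MS.2).

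The main obstacle is bookkeeping. One has to identify the left-described and right-described versions of the symmetric ring via Theorem~\ref{QSIso} and Lemma~\ref{compatibles}, and keep track of which side the Martindale sets live on after the swap, so that the target is exactly \({}_{T_r^{(\tau)}}Q^s_{T_l^{(\tau)}}(S)\).
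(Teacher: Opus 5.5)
Your proposal is correct and takes essentially the paper's route. The paper's proof is only the remark that the argument is the same as for the canonical anti-isomorphism $Q^r_{\max}(R)\to Q^l_{\max}(R^{op})$, and your details fill that in: the check that $T^{(\tau)}$ switches sides, the denominator check $\tau(A)\hat\tau(q)=\hat\tau(qA)\subseteq S$, uniqueness via (ML.2)/(MS.2), and Theorem~\ref{QSIso} to pass between the left and right descriptions of the target in (2).
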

		\begin{proof}The proof is similar to the case \(Q^r_{\max}(R)\) and \(Q^l_{\max}(R^{op})\). 
		\end{proof}

		\begin{corollary}
			Let \(E\) be a free right \(R\)-module with a finite basis of \(n\) elements. There
			is an anti-isomorphism \(\tau : End_R(E_R) \rightarrow M_n(R)\). Furthermore the following pairs are anti-isomorphic.
			\begin{enumerate}
				\item[(1)] \(Q^r_{\max}(End_R(E_R))\) and \(Q^l_{\max}(M_n(R))\).
				\item[(2)] \(Q^r(End_R(E_R))\) and \(Q^l(M_n(R))\).
				\item[(3)] \(Q^s(End_R(E_R))\) and \(Q^s(M_n(R))\).
			\end{enumerate}
		
		\end{corollary}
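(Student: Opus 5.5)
The plan is to get the anti-isomorphism \(\tau\) from the standard identification of endomorphisms of a free module with matrices, and then transport the three rings of quotients along \(\tau\) using the preceding lemma.

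Fix a basis \(e_1,\dots,e_n\) of \(E_R\). For \(\alpha\in End_R(E_R)\), write \(\alpha(e_j)=\sum_i e_i a_{ij}\) and put \(\tau(\alpha)=(a_{ji})\), the transpose of the usual coordinate matrix. The usual coordinate map \(\alpha\mapsto(a_{ij})\) is a ring isomorphism \(End_R(E_R)\cong M_n(R)\). Transposition, however, is not an anti-isomorphism of \(M_n(R)\) when \(R\) is noncommutative. So the cleanest route is to compose the coordinate isomorphism \(End_R(E_R)\cong M_n(R)\) with an anti-isomorphism \(M_n(R)\to M_n(R^{op})\), namely \(X\mapsto X^{t}\) with entries read in \(R^{op}\). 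This yields an anti-isomorphism \(End_R(E_R)\to M_n(R^{op})\).

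If the statement is meant literally with target \(M_n(R)\), the convention must be the one where endomorphisms act on the right of \(E\), or equivalently \(E\) is viewed through row vectors. Under that convention composition reverses order, which supplies the anti-isomorphism directly. I will fix that convention, so that \(\tau\) is additive, unital and satisfies \(\tau(\alpha\beta)=\tau(\beta)\tau(\alpha)\) by checking on basis elements. This bookkeeping of conventions is the only real obstacle. Everything afterwards is formal.

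With \(\tau\) in hand, the three items follow in order.

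\begin{enumerate}
\item[(1)] Use the canonical fact recalled before the preceding lemma: an anti-isomorphism \(R\to S\) extends to \(Q^r_{\max}(R)\to Q^l_{\max}(S)\). Its proof transports dense right ideals of \(R\) to dense left ideals of \(S\) and right-linear maps \(I\to R\) to left-linear maps \(\tau(I)\to S\).
\item[(2)] Apply part (1) of the preceding lemma with \(T_r=\overline T_r\), the maximal right Martindale set of \(End_R(E_R)\). The map \(I\mapsto\tau(I)\) sends ideals to ideals. It sends dense right ideals to dense left ideals, since \({\bf l}(I)=0\) if and only if \({\bf r}(\tau(I))=0\). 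It preserves inclusions, and \(\tau(AB)=\tau(B)\tau(A)\). Hence \(\overline T_r^{(\tau)}\) is a left Martindale set, and applying the same argument with \(\tau^{-1}\) shows it is the maximal one, \(\overline T_l\) of \(M_n(R)\). This gives \(Q^r(End_R(E_R))\) anti-isomorphic to \(Q^l(M_n(R))\).
\item[(3)] Apply part (2) of the preceding lemma with the maximal sets \(\overline T_l\) and \(\overline T_r\). By the same reasoning as in (2), \(\overline T_r^{(\tau)}\) and \(\overline T_l^{(\tau)}\) are the maximal left and right Martindale sets of \(M_n(R)\). Therefore \(Q^s(End_R(E_R))\) is anti-isomorphic to \({}_{\overline T_l}Q^s_{\overline T_r}(M_n(R))=Q^s(M_n(R))\).
\end{enumerate}
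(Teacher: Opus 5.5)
Your route is the one the paper intends. The corollary has no separate proof and is meant as a direct application of the preceding lemma, together with the canonical anti-isomorphism \(Q^r_{\max}(R)\to Q^l_{\max}(R^{op})\). Your check that \(\tau\) gives a bijection between ideals \(I\) of \(End_R(E_R)\) with \({\bf l}(I)=0\) and ideals of \(M_n(R)\) with \({\bf r}(\tau(I))=0\) is exactly what (2) and (3) need. It shows \(\tau\) carries \(\overline T_r,\overline T_l\) onto the maximal left and right Martindale sets of \(M_n(R)\), respectively.

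Your caveat about conventions is justified, and in fact necessary. The paper's own convention in Theorem~\ref{FDR_minimal} writes endomorphisms of a right module on the left and embeds \(R\subseteq End_R(D_R)\) by left multiplications. Under that convention the coordinate map is a ring isomorphism \(End_R(E_R)\cong M_n(R)\). Now take \(n=1\) and a division ring \(D\not\cong D^{op}\), e.g.\ a central division algebra of degree \(3\) over \(\mathbb{Q}\). Then neither the anti-isomorphism nor any of (1)--(3) can hold. So the statement is correct only with endomorphisms of \(E_R\) written on the right, or with target \(M_n(R^{op})\) throughout.

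One slip in your set-up should be fixed. Once endomorphisms act on the right, write \((e_j)\alpha=\sum_i e_ia_{ij}\), and let \(A=(a_{ij})\), \(B=(b_{ij})\) be the coordinate matrices of \(\alpha,\beta\). Then \((e_j)(\alpha\beta)=\sum_i e_i(BA)_{ij}\), so the anti-isomorphism is the untransposed coordinate map \(\alpha\mapsto A\). The transpose \(\tau(\alpha)=(a_{ji})\) you actually defined is not anti-multiplicative for \(n\geq 2\); over a commutative \(R\) it is multiplicative. So the ``check on basis elements'' you defer would fail for that \(\tau\).

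Also drop the gloss ``equivalently, \(E\) viewed through row vectors''. Right multiplication of row vectors by a matrix is not right \(R\)-linear when \(R\) is noncommutative. With these corrections your argument is complete.
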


		Recall a map \(x\mapsto x^*\) of a ring \(R\) into itself is an involution if it is
		a ring anti-automorphism such that \(x^{**}=x\) for any \(x\in R\). A map $D:R\rightarrow S$ is a derivation if $D(xy)=D(x)y+xD(y)$.
		\begin{corollary}
			Suppose \(R\) is a ring with involution \(*\). Then \(*\) extends to an involution
			on \(Q^s(R)\).
		\end{corollary}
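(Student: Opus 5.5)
We apply the preceding lemma with \(S=R\) and \(\tau=*\), which is a ring anti-isomorphism \(R\rightarrow R\). By part (2), \(*\) extends uniquely to an anti-isomorphism
\[\tilde\tau : {}_{\overline T_l}Q^s_{\overline T_r}(R)\rightarrow {}_{\overline T_r^{(*)}}Q^s_{\overline T_l^{(*)}}(R).\]
So two things remain: show that the target ring is \(Q^s(R)\) itself, and show that \(\tilde\tau\) squares to the identity.

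For the first, we check that \(\overline T_r^{(*)}=\overline T_l\) and \(\overline T_l^{(*)}=\overline T_r\). If \(I\unlhd R\), then \(I^*\unlhd R\). Applying \(*\) to \(\mathbf{l}_R(I)\) gives \(\mathbf{l}_R(I)^*=\mathbf{r}_R(I^*)\). By the Passman criterion, \(I\) is right dense iff \(\mathbf{l}_R(I)=0\), and this holds iff \(\mathbf{r}_R(I^*)=0\), i.e. iff \(I^*\) is left dense. Hence \(*\) maps \(\overline T_r\) bijectively onto \(\overline T_l\), and by symmetry maps \(\overline T_l\) onto \(\overline T_r\). Since \(\overline T_r^{(*)}\) is then all of \(\overline T_l\), maximality is automatic. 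Therefore the codomain is \({}_{\overline T_l}Q^s_{\overline T_r}(R)=Q^s(R)\), and \(\tilde\tau\) is an anti-automorphism of \(Q^s(R)\) extending \(*\).

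For the second, \(\tilde\tau\circ\tilde\tau\) is a ring automorphism of \(Q^s(R)\) that fixes \(R\) pointwise, because \(x^{**}=x\) on \(R\). We show such an automorphism \(\psi\) is the identity. Take \(f\in Q^s(R)\) and choose \(A\in\overline T_r\) with \(fA\subseteq R\). For \(a\in A\),
\[\psi(f)a=\psi(f)\psi(a)=\psi(fa)=fa.\]
Thus \((\psi(f)-f)A=0\), and (MS.2) gives \(\psi(f)=f\). Alternatively, \(\tilde\tau\circ\tilde\tau\) is an extension of the identity anti-isomorphism composed twice, and the uniqueness of extensions already forces it to be the identity.

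The main obstacle is making the uniqueness and codomain identification rigorous. This relies on understanding the extension in part (2) concretely: for \(f\) with \(fA\cup Bf\subseteq R\), the image \(f^*\) is the element satisfying \(f^*a^*=(af)^*\) for \(a\in A\) and \(b^*f^*=(fb)^*\) for \(b\in B\). This element exists by (MS.3) applied to the compatible pair of maps \(b^*\mapsto (fb)^*\) on \(B^*\in\overline T_r\) and \(a^*\mapsto (af)^*\) on \(A^*\in\overline T_l\). The fixing argument above then needs only (MS.2).
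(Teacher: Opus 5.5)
Your argument is correct and follows the route the paper intends. The paper states this corollary without proof, as an immediate consequence of the anti-isomorphism lemma with \(S=R\) and \(\tau=*\); your annihilator check \(\mathbf{l}_R(I)^*=\mathbf{r}_R(I^*)\) (giving \(\overline T_r^{(*)}=\overline T_l\) and \(\overline T_l^{(*)}=\overline T_r\)) and your (MS.2) argument that \(\tilde\tau^2=\mathrm{id}\) are exactly the two checks needed. One slip, in your last paragraph (which the argument does not rely on): since \(fA\cup Bf\subseteq R\), the defining identities should be \(a^*f^*=(fa)^*\) and \(f^*b^*=(bf)^*\), i.e.\ the maps are \(b^*\mapsto(bf)^*\) on \(B^*\) and \(a^*\mapsto(fa)^*\) on \(A^*\), because \(fb\) and \(af\) need not lie in \(R\).
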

		
		\begin{lemma}\label{UniqueExtensionsHomSym}
			Every automorphism (resp. derivation) of \(R\) extends uniquely to an automorphism (resp. derivation) of \({^{}_{T_l}}Q^s_{T_r}(R)\). The mappings also extend uniquely to \(Q^r_{T_r}(R)\) (and  to  \(Q^l_{T_l}(R)\)).
		\end{lemma}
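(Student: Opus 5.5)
The plan is to build each extension directly from the functional description of Theorem~\ref{equiclass} (and its left analogue), and to get uniqueness from (MR.2). I then pass to the symmetric ring by checking that the right and left constructions agree. I read the statement for automorphisms \(\varphi\) with \(\varphi(T_r)=T_r\) and \(\varphi(T_l)=T_l\). This is the situation where the claim makes sense. It holds automatically for the maximal sets \(\overline T_r,\overline T_l\), because \(\varphi\) preserves ideals and annihilators. For an arbitrary Martindale set one needs this invariance, and I would state it explicitly. Derivations need no such hypothesis.

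\emph{Automorphisms.} Take \(q=[A,\alpha]\in Q^r_{T_r}(R)\). Define \(\varphi(q)=[\varphi(A),\varphi\alpha\varphi^{-1}]\). Here \(\varphi(A)\in T_r\), and \(\varphi\alpha\varphi^{-1}\) is right \(R\)-linear on \(\varphi(A)\), since \(\varphi\alpha\varphi^{-1}(\varphi(a)r)=\varphi(\alpha(a\varphi^{-1}(r)))=\varphi\alpha\varphi^{-1}(\varphi(a))\,r\).

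\begin{itemize}
\item Compatibility with \(\sim\): restriction to intersections commutes with conjugation.
\item Ring operations: \(\varphi(BA)=\varphi(B)\varphi(A)\), and conjugation respects sums and composites, so \(\varphi\) preserves \(+\) and \(\cdot\) as defined after Definition~\ref{idealfunctionpair}.
\item Extension: \(r=[R,i_r]\) goes to \([R,i_{\varphi(r)}]\).
\item Bijectivity: the inverse comes from \(\varphi^{-1}\).
\item Uniqueness: let \(\psi\) be any extension and \(qA\subseteq R\). Then \(\psi(q)\varphi(a)=\psi(qa)=\varphi(qa)\) for all \(a\in A\). So \(\psi(q)-\varphi(q)\) kills \(\varphi(A)\in T_r\), and (MR.2) gives \(\psi(q)=\varphi(q)\).
\end{itemize}

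\emph{Derivations.} Take \(q\) with \(qA\subseteq R\), \(A\in T_r\), and use \(A^2\in T_r\). For \(x=\sum a_ib_i\in A^2\) we have \(d(x)=\sum d(a_i)b_i+a_id(b_i)\in A\), because \(A\) is an ideal. Hence \(q\,d(x)\in R\), and I set
\[
\delta_q(x)=d(qx)-q\,d(x),\qquad x\in A^2 .
\]
This is a formula in \(x\) alone, so there is no well-definedness issue. It is right \(R\)-linear:
\[
\delta_q(xr)=d(qx)r+qx\,d(r)-q\,d(x)r-qx\,d(r)=\delta_q(x)r .
\]
By (MR.3) (Proposition~\ref{propMs}) there is a unique \(d(q)\in Q^r_{T_r}(R)\) with \(d(q)x=\delta_q(x)\) for \(x\in A^2\). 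Independence of the choice of \(A\) follows by passing to \((A\cap A')^2\) and applying (MR.2). Additivity is immediate. For \(q\in R\) the definition gives back the original \(d(q)\).

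The main obstacle is the Leibniz rule \(d(pq)=d(p)q+p\,d(q)\) in the quotient ring. The plan is to test both sides on a small enough ideal in \(T_r\) and then invoke (MR.2). If \(pB\subseteq R\) and \(qA\subseteq R\), choose \(C=(A^2)(B^2)\cdots\) inside \(A^2\) with \(qC\subseteq B^2\); for instance \(C=(A B^2 A)^2\) works, since \(qAB^2A\subseteq RB^2A\subseteq B^2\). For \(x\in C\) we get \(d(pq)x=d(pqx)-pq\,d(x)\). On the other side,
\[
d(p)qx+p\,d(q)x=d(p(qx))-p\,d(qx)+p\,d(qx)-pq\,d(x),
\]
where every term lies in \(R\) by the choice of \(C\). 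The two expressions agree, so (MR.2) finishes. Uniqueness of the extension is the same argument as for automorphisms: any extending derivation \(D\) satisfies \(D(q)x=D(qx)-q\,D(x)=\delta_q(x)\) on \(A^2\).

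\emph{Left and symmetric rings.} The left statements are the mirror images, or follow from the preceding lemma on anti-isomorphisms applied to \(R^{op}\). For \({}_{T_l}Q^s_{T_r}(R)\), take \(f\) with \(fA\cup Bf\subseteq R\). The right construction gives \(d(f)\) with \(d(f)A^2\subseteq R\). Symmetrically, \(B^2 d(f)\subseteq R\), because on \(B^2\) the left-defined element \(y\mapsto d(yf)-d(y)f\) is compatible with the right one. I would check this via Lemma~\ref{compatibles}(1), computing \(y\,d(f)\,x\) both ways for \(x\in A^2\), \(y\in B^2\). Hence \(d(f)\) lies in the symmetric ring, and uniqueness is inherited from the right ring. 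The same reasoning applies to \(\varphi\).
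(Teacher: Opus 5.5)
Your proposal is correct and follows essentially the paper's route for derivations: the same functional $x\mapsto d(qx)-q\,d(x)$ on $A^2$, its left counterpart on $B^2$ with the same compatibility computation, the Leibniz rule checked on a smaller ideal of $T_r$ (the paper uses $A^4$ for a common $A$, you use $(AB^2A)^2$), and uniqueness from (MR.2)/(MS.2); the only organizational difference is that you build $d$ on $Q^r_{T_r}(R)$ first and then show it preserves the symmetric subring, whereas the paper applies (MS.3) directly. Your invariance hypothesis $\varphi(T_r)=T_r$, $\varphi(T_l)=T_l$ is genuinely needed, and the paper's ``existence is obvious'' passes over it: for $R=k[x]$ and $T$ the set of ideals containing some power of $x$, one has $Q^r_T(R)={}_TQ^s_T(R)=k[x,x^{-1}]$, and the automorphism $x\mapsto x+1$ of $R$ does not extend because $x+1$ is not a unit of $k[x,x^{-1}]$.
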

		\begin{proof}
			The proof is similar to that of~\cite[Lemma 10.9]{Passman89}. We show the case for \({^{}_{T_l}}Q^s_{T_r}(R)\). The existence of an extended automorphism is obvious. We show
			the uniqueness.
			Let \(\alpha, \alpha'\) be two automorphisms of \({^{}_{T_l}}Q^s_{T_r}(R)\) such that
			\(\alpha_{|R}=\alpha'_{|R}\). Let \(f\in {^{}_{T_l}}Q^s_{T_r}(R)\).
			There is \(A\in T_r\) such that
			\(fA\subseteq R\). For any \(a\in A\), we have
			\(\alpha(f)\alpha(a)=\alpha(fa)=\alpha'(fa)=\alpha'(f)\alpha'(a)=
			\alpha'(f)\alpha(a)\). Therefore \((\alpha(f)-\alpha'(f))A=0\). (MS.2) implies \(\alpha(f)=\alpha'(f)\).
			
			Now we show the existence of an extended derivation. Suppose \(\delta :R\rightarrow R\) is a derivation. Let \(f\in {^{}_{T_l}}Q^s_{T_r}(R)\).
			There are \(A\in T_r, B\in T_l\) and  \(\sigma\in Hom_R(A_R,R_R), \phi\in
			Hom_R({_R}B,{_R}R)\) such that \(\sigma (a)=fa, (b)\phi=bf\) for any \(a\in A,b\in B\).

			Note that \(\delta(A^2)\subseteq A\) and thus \(f\delta(\bar a)\in fA\subseteq R\). This implies \(\delta(f\bar a) - f\delta(\bar a)\in R\). The mapping  \(\alpha_f :A^2
			\rightarrow R\) such that \(\alpha_f(\bar a)=\delta(f\bar a) - f\delta(\bar a)\)
			for \(\bar a\in A^2\) is well-defined.
			
			Note \(\alpha_f(\bar a_1+\bar a_2)=
			\delta(f(\bar a_1+\bar a_2))-f(\delta(\bar a_1+\bar a_2))=\delta(f\bar a_1)-
			f\delta (\bar a_1) + \delta (f\bar a_2)-f\delta(\bar a_2)=\alpha_f(\bar a_1)+
			\alpha_f(\bar a_2)\). Furthermore, \(\alpha_f(\bar a_1 \bar a_2)=\delta (f\bar a_1\bar a_2)- f\delta(\bar a_1\bar a_2)=\delta((f\bar a_1)\bar a_2)-f[\bar a_1\delta(\bar a_2)+\delta(\bar a_1)\bar a_2]=f\bar a_1\delta (\bar a_2)+\delta(f\bar a_1)\bar a_2-f\bar a_1\delta(\bar a_2)-f\delta(\bar a_1)\bar a_2=\alpha_f(\bar a_1)\bar a_2\). It is clear that \(\alpha_f(0)=0\), and thus
			\(\alpha_f\in Hom_R({(A^2)}_R,R_R)\). Similarly we can define \(\beta_f \in Hom_R({_R}(B^2),{_R}R)\) such that  \(\beta_f(\bar b)=\delta(\bar b f) - \delta(\bar b)f\) for \(\bar b\in B^2\). Furthermore, \((\bar b) \beta_f\bar a-\bar b \alpha_f(\bar a)=\delta(\bar b f)\bar a - \delta(\bar b)f\bar a-\bar b\delta(f\bar a)+\bar bf\delta(\bar a)= 0\). By (MS.3), there exists a unique \(q_f\in {^{}_{T_l}}Q^s_{T_r}(R)\) such that \(\alpha_f(\bar a)=q_f \bar a, (\bar b)\beta_f=\bar b q_f\) for any \(\bar a\in A^2, \bar b\in B^2\).

			Define \(\bar \delta: {^{}_{T_l}}Q^s_{T_r}(R) \rightarrow {^{}_{T_l}}Q^s_{T_r}(R)\) such that 
			\(\bar\delta(f) = q_f\) when \(f\in {^{}_{T_l}}Q^s_{T_r}(R)\). We are to show
			\(\bar \delta \) is the desired extension of \(\delta\). We note that
			\(\delta (f\bar a)= \alpha_f(\bar a)+f\delta(\bar a)=q_f(\bar a)+f\delta(\bar a)=
			\bar \delta (f)\bar a + f \delta(\bar a)\) for any \(\bar a\in A^2\).
			Assume further \(g\in {^{}_{T_l}}Q^s_{T_r}(R)\) and,
			without loss of generality, \(fA \cup gA\subseteq R\). Since \(fA^3\subseteq A^2\) and
			\(fgA^2\subseteq R\), it follows that for any \(a\in A^4, \bar \delta(fg)a=\delta(fga)-
			fg\delta(a)= \bar \delta(f)ga+f\delta(ga)-fg\delta(a)=f\delta(ga)+\bar \delta(f)ga-
			fg\delta(a)=f\delta(ga)+\bar \delta(f)ga-fg\delta(a)=f\bar \delta(g)a+\bar \delta(f)ga\). Thus \(\bar \delta (fg)=f\bar \delta(g)+ \bar \delta(f)g\). When
			\(r\in R\), \(\delta(r)a+r\delta(a)=\delta (ra)=\bar \delta(r)a+r\delta(a) \). This means \((\delta (r)-\bar \delta (r))A^4=0\). We have \(\bar \delta = \delta \) on \(R\).
			Now we show the extended derivation is unique.
			Suppose \(\bar \delta'\) is another extension of \(\delta\). Since \(fA\subseteq R\), we have \(\bar \delta(f)a+f\delta(a)=\bar \delta(fa)=\delta(fa)=\bar \delta'(fa)=\bar \delta'(f)a+f\delta(a)\), for \(a\in A\). This implies \((\bar \delta(f)-\bar \delta'(f))A=0\), and hence \(\bar\delta=\bar \delta'\).
		\end{proof}
		
		The following results are similar to Theorems~\ref{productquotient} and~\ref{quotientmatrix}.
		\begin{theorem} Given that \(\{R_k\}\) is a collection of rings, we
			let \(T^k_l\in \mathbb{M}_{R_k}^l\) and \(T^k_r \in \mathbb{M}_{R_k}^r\)  for each \(k\). Suppose further that
			\(T_l= \{\prod I_l^k|I_l^k \in T^k_l\}\) and \(T_r= \{\prod I_r^k|I_r^k \in T^k_r\}\). Then \(^{}_{T_l}Q^s_{T_r}(\prod R_k)=\prod {^{}_{T^k_l}Q^s_{T^k_r}(R_k)}\).
		\end{theorem}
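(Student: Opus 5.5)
The plan is to reduce to Theorem~\ref{productquotient} and its left-handed analogue, and then match the two-sided conditions coordinatewise. Write \(R=\prod R_k\). First I check that \(T_l\) is a left Martindale set and \(T_r\) a right Martindale set of \(R\). The remark before Theorem~\ref{productquotient} says that \(\prod I_k\) is right dense in \(R\) exactly when each \(I_k\) is right dense in \(R_k\); the same holds on the left. Closure under ideals containing a member holds because every ideal of \(R\) containing \(\prod I_k\) contains each \(e_kR\cdot I_k\) and hence is itself a product of ideals. Closure under products holds because \(\prod A_k\prod B_k=\prod A_kB_k\) for two-sided ideals. Theorem~\ref{productquotient} then gives \(Q^r_{T_r}(R)=\prod Q^r_{T^k_r}(R_k)\) inside \(Q^r_{\max}(R)=\prod Q^r_{\max}(R_k)\).

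Next I take \(f=(f_k)\in {}_{T_l}Q^s_{T_r}(R)\). Then \(f\in Q^r_{T_r}(R)\), so \(f_k\in Q^r_{T^k_r}(R_k)\). There is also \(B=\prod B_k\in T_l\) with \(Bf\subseteq R\). Since the multiplication is coordinatewise, this gives \(B_kf_k\subseteq R_k\) with \(B_k\in T^k_l\), so \(f_k\in {}_{T^k_l}Q^s_{T^k_r}(R_k)\).

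For the converse I take \((f_k)\) with each \(f_k\in {}_{T^k_l}Q^s_{T^k_r}(R_k)\). For each \(k\) I choose \(A_k\in T^k_r\) and \(B_k\in T^k_l\) with \(f_kA_k\cup B_kf_k\subseteq R_k\). Then \(f=(f_k)\in\prod Q^r_{T^k_r}(R_k)=Q^r_{T_r}(R)\), and \(B=\prod B_k\in T_l\) satisfies \(Bf\subseteq R\). Hence \(f\in {}_{T_l}Q^s_{T_r}(R)\).

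The only delicate point is that in the definition of \({}_{T_l}Q^s_{T_r}(R)\) the left multiplication \(Bf\) is computed in \(Q^r_{\max}(R)\), and I must make sure this agrees with coordinatewise multiplication. It does, because the identification \(Q^r_{\max}(\prod R_k)=\prod Q^r_{\max}(R_k)\) is a ring isomorphism over \(R\). Alternatively, one can use the characterization by \(\langle T_l,T_r\rangle\)-extensions: verify (MS.1)--(MS.3) coordinatewise for \(\prod {}_{T^k_l}Q^s_{T^k_r}(R_k)\). In (MS.3), a compatible pair \(\sigma,\phi\) on \(\prod A_k,\prod B_k\) restricts to compatible pairs on \(A_k,B_k\) by multiplying by the central idempotents \(e_k\). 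I expect no real obstacle beyond this bookkeeping.
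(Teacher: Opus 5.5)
Your core argument is correct and is the argument the paper intends. You check both inclusions coordinatewise inside \(Q^r_{\max}(\prod R_k)=\prod Q^r_{\max}(R_k)\), extracting \(f_kA_k\subseteq R_k\) and \(B_kf_k\subseteq R_k\) by testing against elements supported in a single coordinate. The paper gives no proof beyond saying the result is similar to Theorems~\ref{productquotient} and~\ref{quotientmatrix}.

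Your preliminary paragraph, however, rests on two claims that fail when the family \(\{R_k\}\) is infinite.

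First, an ideal of \(\prod R_k\) containing some \(\prod I_k\) need not be a product of ideals. In \(\prod_{k\in\mathbb{N}}\mathbb{Z}\) with every \(T_k=\{2^m\mathbb{Z}\mid m\geq 0\}\), the ideal \(J=\prod 2\mathbb{Z}+\bigoplus\mathbb{Z}\) contains \(\prod 2\mathbb{Z}\) and every \(e_k\), but not \((1,1,\ldots)\). So it is not of the form \(\prod J_k\).

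Second, elements of \((\prod A_k)(\prod B_k)\) are sums \(\sum_{j=1}^n a^{(j)}b^{(j)}\) with \(n\) independent of \(k\), so this ideal can be strictly smaller than \(\prod A_kB_k\). For example, take \(R_k=\mathbb{Q}[x_1,\ldots,x_k,y_1,\ldots,y_k]\), \(A_k=(x_1,\ldots,x_k)\), \(B_k=(y_1,\ldots,y_k)\) and the element \((\sum_{i\leq k}x_iy_i)_k\). Comparing degree-two parts shows its \(k\)-th coordinate needs at least \(k\) summands.

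Both claims are fine for finite families, where every ideal \(J\) equals \(\prod e_kJ\). In the infinite case \(T_l\) and \(T_r\), as literally defined, need not be Martindale sets. The same caveat applies to the unproved assertion in Theorem~\ref{productquotient}.

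This does not damage the equality itself. Both sides are cut out by the elementwise conditions ``\(fA\subseteq R\) for some \(A\in T_r\)'' and ``\(Bf\subseteq R\) for some \(B\in T_l\)''. Your second and third paragraphs verify these directly, using only that members of \(T_l\) and \(T_r\) are products of members of the \(T^k_l\) and \(T^k_r\).

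So restrict the Martindale-set paragraph to finite families, or state it as a caveat. In the infinite case, do not rely on your alternative route through (MS.1)--(MS.3), since that characterization is proved only for genuine Martindale sets.
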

		\begin{theorem} Let \(n\) be a positive integer.
			Assume  \(T_l\in \mathbb{M}_R^l\) and  \(T_r\in \mathbb{M}_R^r\). We define \(T'_x=
			\left\{M_n(I) \middle| I\in T_x\right\}\) where  \(x=l \text{ or } r\). Then
			\[
			^{}_{T'_l}Q^s_{T'_r}(M_n(R))=M_n(^{}_{T_l}Q^s_{T_r}(R)).
			\]
		\end{theorem}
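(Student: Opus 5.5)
The plan is to reduce to Theorem~\ref{quotientmatrix} and its left-handed analogue, and then intersect the two descriptions. By Theorem~\ref{quotientmatrix}, \(T'_r\) is a right Martindale set of \(M_n(R)\) and
\[
Q^r_{T'_r}(M_n(R))=M_n(Q^r_{T_r}(R)).
\]
The same argument, with the left-hand version of the preceding lemma (\(I\unlhd_l^{den}R\) iff \(M_n(I)\unlhd_l^{den}M_n(R)\)), shows that \(T'_l\) is a left Martindale set of \(M_n(R)\). Applying the anti-isomorphism lemma to \(R^{op}\), or repeating the computation with rows and columns interchanged, gives \(Q^l_{T'_l}(M_n(R))=M_n(Q^l_{T_l}(R))\). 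I will work inside \(Q^r_{\max}(M_n(R))=M_n(Q^r_{\max}(R))\), using the first description of the symmetric ring of quotients as a subring of the right one.

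For the inclusion \(\subseteq\), take \(f=\sum f_{ij}E_{ij}\) in \({}_{T'_l}Q^s_{T'_r}(M_n(R))\). Then \(f\in Q^r_{T'_r}(M_n(R))\), so every entry satisfies \(f_{ij}\in Q^r_{T_r}(R)\). There is also some \(K\in T_l\) with \(M_n(K)f\subseteq M_n(R)\). Multiplying by matrices \(kE_{pi}\) with \(k\in K\), the \((p,j)\) entry of the product is \(kf_{ij}\). Hence \(Kf_{ij}\subseteq R\) for all \(i,j\), and so \(f_{ij}\in {}_{T_l}Q^s_{T_r}(R)\).

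For the inclusion \(\supseteq\), take \(f=\sum f_{ij}E_{ij}\) with each entry in \({}_{T_l}Q^s_{T_r}(R)\). Choose \(A_{ij}\in T_r\) and \(B_{ij}\in T_l\) witnessing the defining conditions for each entry. Put \(A=\bigcap A_{ij}\) and \(B=\bigcap B_{ij}\). Since Martindale sets are closed under finite intersections, \(A\in T_r\) and \(B\in T_l\): indeed \(A_1A_2\subseteq A_1\cap A_2\), and Martindale sets are closed under products and under passing to larger ideals. Then \(fM_n(A)\subseteq M_n(R)\) and \(M_n(B)f\subseteq M_n(R)\), because each entry of these products is a finite sum of terms \(f_{il}a\) or \(bf_{lj}\), all of which lie in \(R\). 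Therefore \(f\in {}_{T'_l}Q^s_{T'_r}(M_n(R))\).

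The only delicate point is the product \(M_n(K)f\). The entries of \(f\) lie in \(Q^r_{\max}(R)\) rather than in \(R\), so I must check that the \(n\times n\) matrix product is really being computed in \(M_n(Q^r_{\max}(R))\), with \(R\) embedded as a subring there. This check is immediate from the identification \(Q^r_{\max}(M_n(R))=M_n(Q^r_{\max}(R))\) already used in the proof of Theorem~\ref{quotientmatrix}. Everything else is bookkeeping on matrix units.
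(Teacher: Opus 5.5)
Your proof is correct and matches what the paper intends: the paper gives no separate argument, only remarking that the result is "similar to" Theorem~\ref{quotientmatrix}, and your reduction to that theorem plus the entrywise check of the left condition with the matrix units \(kE_{pi}\) is exactly that computation. The identification \(Q^l_{T'_l}(M_n(R))=M_n(Q^l_{T_l}(R))\) that you mention is never used; you only need the fact that \(T'_l\) is a left Martindale set.
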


		\section{Center}
		\begin{definition}
			Given a ring \(R\), the {\em center} of \(R\) is \[Z(R)=\left\{r\in R\middle | rx=xr \text{ for any } x\in R\right\}.\]
		\end{definition}

		\begin{lemma}\label{centerTran}
			Suppose \(R\), \(E\), and \(F\) are subrings of  \(Q^r_{\max}(R)\) such that \(R\subseteq E\subseteq F\subseteq Q^r_{\max}(R)\). Then
			\(Z(R)\subseteq Z(E) \subseteq Z(F) \subseteq Z(Q^r_{\max}(R))\).
		\end{lemma}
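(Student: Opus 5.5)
It suffices to prove one general claim and apply it three times. The claim: if \(S\subseteq S'\subseteq Q^r_{\max}(S)\) are rings, then \(Z(S)\subseteq Z(S')\). Granting it, the chain follows directly. The pair \(R\subseteq E\) satisfies the hypothesis, since \(E\subseteq Q^r_{\max}(R)\). For \(E\subseteq F\) and \(F\subseteq Q^r_{\max}(R)\), I use that any intermediate ring \(R\subseteq E\subseteq Q^r_{\max}(R)\) satisfies \(Q^r_{\max}(E)=Q^r_{\max}(R)\). This standard fact has already been used in the proof of Theorem~\ref{IsoMartQR} via Lemma~\ref{SinRinQmaxS}. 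Hence \(F\subseteq Q^r_{\max}(E)\) and \(Q^r_{\max}(R)=Q^r_{\max}(F)\), so the claim applies to both remaining pairs.

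To prove the claim, take \(z\in Z(S)\) and \(q\in S'\). Since \(S'\subseteq Q^r_{\max}(S)\), the set \(I=q^{-1}S=\{s\in S\mid qs\in S\}\) is a dense right ideal of \(S\). This holds because \(S_S\) is dense in \(Q^r_{\max}(S)_S\), and \(q^{-1}S\) is dense by the same argument used in Lemma~\ref{SinRinQmaxS}. For \(s\in I\), the element \(zs=sz\) lies in \(I\), since \(q(sz)=(qs)z\in S\). We then compute
\[(qz-zq)s=q(sz)-z(qs)=(qs)z-z(qs)=0,\]
using that \(qs\in S\) commutes with \(z\). So \((qz-zq)I=0\).

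Now \(qz-zq\in Q^r_{\max}(S)\) annihilates the dense right ideal \(I\) of \(S\). Elements of the maximal right ring of quotients are determined by their action on any dense right ideal, by (MR.2) for \(T\) the set of all dense right ideals. Therefore \(qz=zq\). This shows \(z\) commutes with every element of \(S'\), that is, \(z\in Z(S')\).

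The main point needing care is the justification that \(q^{-1}S\) is dense in \(S\) and that \(Q^r_{\max}\) is unchanged for intermediate rings. Both are standard facts about rational extensions (\(S_S\leqslant^{den}_r Q^r_{\max}(S)_S\), and density is transitive), already implicitly used in the paper, so I will cite them rather than reprove them. The rest is the short computation above.
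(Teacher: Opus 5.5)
Your proof is correct, but it is organized differently from the paper's. The paper's one-line proof rests on a single fact: the centralizer of \(R\) in \(Q^r_{\max}(R)\) is already its center, \(Z(Q^r_{\max}(R))=\{q\in Q^r_{\max}(R)\mid qx=xq \text{ for all } x\in R\}\). From this, any \(z\in Z(E)\) commutes with \(R\), so it is central in \(Q^r_{\max}(R)\) and therefore commutes with all of \(F\). Every inclusion then follows at once, and in fact \(Z(E)=E\cap Z(Q^r_{\max}(R))\).

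You instead prove that the center grows along any \(S\subseteq S'\subseteq Q^r_{\max}(S)\) and apply this three times. That forces you to change the base ring and invoke \(Q^r_{\max}(E)=Q^r_{\max}(F)=Q^r_{\max}(R)\). This fact is true, and the paper itself uses it in Theorem~\ref{IsoMartQR}, so nothing breaks. The detour is avoidable, though. Your commutator computation only uses that \(z\) commutes with the base ring, so run it with \(I=q^{-1}R\), which is dense in \(R\) for every \(q\in Q^r_{\max}(R)\). This shows directly that any element of \(Q^r_{\max}(R)\) commuting with \(R\) is central, which is exactly the paper's fact, with no reference to \(Q^r_{\max}(E)\) or \(Q^r_{\max}(F)\).

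What your version gains is a self-contained general statement about rational ring extensions. It also writes out the density argument, which the paper asserts without proof.
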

		\begin{proof}
			The result follows from the fact that \[Z(Q^r_{\max}(R))=\{q\in Q^r_{\max}(R)| qx=xq  \text{ for any } x\in R\}.\] 
		\end{proof}
		
		\begin{corollary}Suppose \(T_r,T^{'}_r\in \mathbb{M}^r_R\) and \(T_l,T^{'}_l\in \mathbb{M}^l_R\).
			\begin{enumerate}
				\item[(1)] If \(T^{'}_r\subseteq T_r\), then  \(Z(Q^r_{T^{'}_r}(R))\subseteq Z(Q^r_{T_r}(R))\).
				\item[(2)] If \(T^{'}_r\subseteq T_r\) and \(T^{'}_l\subseteq T_l\), then \(Z(^{}_{T^{'}_l}Q^s_{T^{'}_r}(R))\subseteq Z(^{}_{T^{}_l}Q^s_{T^{}_r}(R))\).
			\end{enumerate}
		\end{corollary}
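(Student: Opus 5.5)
The plan is to reduce both parts to Lemma~\ref{centerTran}, since it is stated for any chain of subrings $R\subseteq E\subseteq F\subseteq Q^r_{\max}(R)$. The only work is to exhibit the relevant rings as such a chain, using the inclusion results for Martindale sets established earlier.

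For (1), Lemma~\ref{InclusionLemma}(1) gives $Q^r_{T'_r}(R)\subseteq Q^r_{T_r}(R)$ when $T'_r\subseteq T_r$. Both rings contain $R$ and, by definition, are subrings of $Q^r_{\max}(R)$. So we have the chain
\[
R\subseteq Q^r_{T'_r}(R)\subseteq Q^r_{T_r}(R)\subseteq Q^r_{\max}(R),
\]
and Lemma~\ref{centerTran} with $E=Q^r_{T'_r}(R)$ and $F=Q^r_{T_r}(R)$ yields $Z(E)\subseteq Z(F)$, which is (1).

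For (2), view ${}_{T_l}Q^s_{T_r}(R)$ through its first description, as the subring of $Q^r_{T_r}(R)$ consisting of those $f$ with $Bf\subseteq R$ for some $B\in T_l$. Take $f\in{}_{T'_l}Q^s_{T'_r}(R)$. Then $f\in Q^r_{T'_r}(R)\subseteq Q^r_{T_r}(R)$, and there is $B\in T'_l\subseteq T_l$ with $Bf\subseteq R$. Hence $f\in{}_{T_l}Q^s_{T_r}(R)$, and we get the chain
\[
R\subseteq {}_{T'_l}Q^s_{T'_r}(R)\subseteq {}_{T_l}Q^s_{T_r}(R)\subseteq Q^r_{\max}(R),
\]
so Lemma~\ref{centerTran} applies again.

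The one point to check, and the only possible obstacle, is that this inclusion is a genuine inclusion of subrings of $Q^r_{\max}(R)$ rather than merely an embedding. Theorem~\ref{QSIso} identifies the right-hand and left-hand descriptions of ${}_{T_l}Q^s_{T_r}(R)$ only up to isomorphism over $R$, so the left-hand description cannot be used here. Working throughout with the right-hand description inside $Q^r_{\max}(R)$ avoids the issue, and with that convention the argument is immediate.
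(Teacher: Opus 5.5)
Your proof is correct and follows the paper's intended argument. The paper gives no written proof; it treats the corollary as an immediate consequence of Lemma~\ref{centerTran}, applied to the inclusions $Q^r_{T'_r}(R)\subseteq Q^r_{T_r}(R)$ (from Lemma~\ref{InclusionLemma}(1)) and ${}_{T'_l}Q^s_{T'_r}(R)\subseteq {}_{T_l}Q^s_{T_r}(R)$ inside $Q^r_{\max}(R)$. Viewing the symmetric ring through its right-hand description, as you do, is also how the paper applies Lemma~\ref{centerTran} to it.
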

		\begin{lemma}\label{CenterSym}
			Let \(f\in Z(^{}_{T_l}Q^s_{T_r}(R))\). There exists \(X\in T_l\cap T_r\) such that
			\(fX\cup Xf\subseteq R\).
		\end{lemma}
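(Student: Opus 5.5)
The plan is to take \(X=A+B\), where \(A\in T_r\) and \(B\in T_l\) are the ideals supplied by the definition of \({}_{T_l}Q^s_{T_r}(R)\). Centrality of \(f\) then lets us move \(f\) from one side of \(A\) and \(B\) to the other.

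First, since \(f\in {}_{T_l}Q^s_{T_r}(R)\), there are \(A\in T_r\) and \(B\in T_l\) with \(fA\subseteq R\) and \(Bf\subseteq R\). This is (MS.1) for the \(\langle T_l,T_r\rangle\)-extension \({}_{T_l}Q^s_{T_r}(R)\), or directly the defining condition of the ring.

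Next we check that \(X=A+B\in T_l\cap T_r\). The set \(A+B\) is a two-sided ideal of \(R\), being a sum of two ideals. It contains \(A\in T_r\), so by condition (1) of Definition~\ref{MartindaleSet} it lies in \(T_r\). It also contains \(B\in T_l\), so by the left version of the same condition it lies in \(T_l\). This upward closure is exactly what makes the argument work: it would fail for a product such as \(AB\), which is why we use the sum.

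Finally we use that \(f\) is central. Since \(R\subseteq {}_{T_l}Q^s_{T_r}(R)\) and \(f\) lies in the center, we have \(fr=rf\) for every \(r\in R\); in particular \(fa=af\) for \(a\in A\) and \(fb=bf\) for \(b\in B\). Hence
\[
fX=fA+fB=fA+Bf\subseteq R \quad\text{and}\quad Xf=Af+Bf=fA+Bf\subseteq R.
\]
Here we use that the elements of \(X\) are sums \(a+b\) with \(a\in A\) and \(b\in B\), so multiplication by \(f\) distributes over such sums.

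No step is a real obstacle. The only point needing care is the choice of \(X\): it must lie in both Martindale sets at once, and the upward-closure axiom settles this for the sum \(A+B\).
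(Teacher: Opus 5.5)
Your proposal is correct and follows essentially the same argument as the paper. You take \(X=A+B\), use upward closure of both Martindale sets to get \(X\in T_l\cap T_r\), and then use centrality of \(f\) over \(R\) to obtain \(fX=Xf=fA+Bf\subseteq R\).
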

		\begin{proof}It suffices to show there is \(X\in T_l\cap T_r\) such that \(fX\subseteq R\) since \(f\in Z(^{}_{T_l}Q^s_{T_r}(R))\). Note that there are \(A\in T_r\) and \(B\in T_l\) such that \(fA \subseteq R\) and \(Bf\subseteq R\). Let \(X=A+B\). Obviously \(X\in T_l\cap T_r\). Furthermore, \(fX\subseteq fA+fB=fA+Bf\subseteq R\).
		\end{proof}
		
		\begin{theorem}\label{centerLBi}
			Suppose \([A,\sigma]\in Q^r_T(R)\). Then
			\([A,\sigma]\in Z(Q^r_T(R))\) if and only if  \(\sigma\) is an \((R,R)\)-bimodule homomorphism. 
		\end{theorem}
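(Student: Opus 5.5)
Let \(q\in Q^r_T(R)\) be the element identified with \([A,\sigma]\) by Theorem~\ref{equiclass}, so that \(\sigma(a)=qa\) for all \(a\in A\). Both directions reduce to comparing \(q\) with elements of \(R\). The tools are Lemma~\ref{centerTran}, (MR.2), and the density of ideals in \(T\), i.e. \({\bf l}_R(A)=0\).

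(\(\Rightarrow\)) Assume \(q\in Z(Q^r_T(R))\). Since \(R\subseteq Q^r_T(R)\), we have \(qr=rq\) for every \(r\in R\). Then for \(r\in R\) and \(a\in A\):
\[\sigma(ra)=q(ra)=(qr)a=(rq)a=r\sigma(a).\]
Here \(ra\in A\) because \(A\) is a two-sided ideal. So \(\sigma\) is left \(R\)-linear, and it is right \(R\)-linear by hypothesis. Hence \(\sigma\) is an \((R,R)\)-bimodule homomorphism.

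(\(\Leftarrow\)) Assume \(\sigma(ra)=r\sigma(a)\) for all \(r\in R\) and \(a\in A\). Fix \(r\in R\); we show \(qr=rq\).
\begin{itemize}
\item For \(a\in A\), the element \(ra\) lies in \(A\), so \((qr)a=q(ra)=\sigma(ra)=r\sigma(a)=(rq)a\).
\item Hence \((qr-rq)A=0\). Since \(qr-rq\) lies in \(Q^r_T(R)\) and \(A\in T\), (MR.2) gives \(qr=rq\).
\end{itemize}
So \(q\) commutes with every element of \(R\). By the description of the center quoted in the proof of Lemma~\ref{centerTran}, \(q\in Z(Q^r_{\max}(R))\), and therefore \(q\) commutes with every element of \(Q^r_T(R)\subseteq Q^r_{\max}(R)\).

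If one prefers not to rely on that fact, the commutation can be checked directly. Take \(p\in Q^r_T(R)\) with \(pB\subseteq R\) for some \(B\in T\). For \(b\in B\) and \(a\in A\), using \(q\) central in \(R\) twice:
\[(qp)(ba)=q\,(pb)\,a=(pb)\,qa=p\,(bqa)=p\,q\,(ba).\]
This uses \(pb\in R\) and \(bqa=qba\). So \((qp-pq)BA=0\). Since \(BA\in T\) because \(T\) is closed under products, (MR.2) gives \(qp=pq\).

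The main point to get right is in the \((\Leftarrow)\) direction: passing from "commutes with \(R\)" to "central in \(Q^r_T(R)\)". This is done either through the characterization of \(Z(Q^r_{\max}(R))\) or through the direct computation above, which depends on \(BA\) remaining in \(T\). The rest is routine.

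Finally, the statement is about the class \([A,\sigma]\), not a particular representative. This causes no difficulty. If \((A,\sigma)\sim(A',\sigma')\), then \(\sigma\) and \(\sigma'\) are both restrictions of left multiplication by the same \(q\). The argument above shows that for any representative, being a bimodule map is equivalent to \(q\) commuting with \(R\). So the condition does not depend on the representative.
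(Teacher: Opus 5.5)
Your proof is correct and follows the paper's argument. In both directions you compare \(\sigma(ra)=q(ra)\) with \(r\sigma(a)=rqa\) and use (MR.2) to cancel \(A\). You also justify the step from ``\(q\) commutes with \(R\)'' to ``\(q\in Z(Q^r_T(R))\)'', either through the description of \(Z(Q^r_{\max}(R))\) or through your computation with \(BA\in T\); the paper asserts that step without comment.
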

		\begin{proof} For each \([A,\sigma]\in Q^r_T(R)\), there is an \(f\in Q^r_T(R)\) such that \(\sigma (a)=fa\) for all \(a\in A\).

(\(\Rightarrow\))  If \([A,\sigma]\in Z(Q^r_T(R))\), we have 	\(fr=rf\) for any \(r\in R\). Thus \(\sigma (ra)=fra=rfa=r\sigma (a)\) when \(r\in R, a\in A\). This shows \(\sigma\) is an \((R,R)\)-bimodule homomorphism.
			
			(\(\Leftarrow\)) Now suppose the contrary that \(\sigma:A\rightarrow R\) is an \((R,R)\)-bimodule homomorphism. We are to show the associated \(f\) is in the center of \(Q^r_T(R)\). Note \(rfa=r\sigma (a) = \sigma (ra) =fra\) where \(r\in R, a\in A\). This implies \((rf-fr)A=0\) and thus \(rf=fr\) for any \(r\in R\). That is \(f\in Z(Q^r_T(R))\).
		\end{proof}

		\begin{theorem}\label{CenterQs}
			Suppose \([A,\sigma]\in {^{}_{T_l}Q^s_{T_r}(R)}\) where \(A\in T_r\). Then
			\([A,\sigma]\in Z(^{}_{T_l}Q^s_{T_r}(R))\) if and only if \(\sigma\) is an \((R,R)\)-bimodule homomorphism, i.e. \(\sigma\in Hom_R({_R}A_R,{_R}R_R)\).
		\end{theorem}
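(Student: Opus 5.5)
We argue exactly as in Theorem~\ref{centerLBi}, adapted to the symmetric ring. Write \(Q={^{}_{T_l}Q^s_{T_r}(R)}\), which is a subring of \(Q^r_{T_r}(R)\). Let \(f\in Q\) be the element determined by \([A,\sigma]\), so that \(\sigma(a)=fa\) for all \(a\in A\). In particular \(fA\subseteq R\).

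For the forward direction, suppose \([A,\sigma]\in Z(Q)\). Since \(R\subseteq Q\), we have \(fr=rf\) for every \(r\in R\). Then for \(r\in R\) and \(a\in A\),
\[\sigma(ra)=fra=rfa=r\sigma(a).\]
So \(\sigma\) is left \(R\)-linear as well as right \(R\)-linear, and hence \(\sigma\in Hom_R({_R}A_R,{_R}R_R)\).

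For the converse, assume \(\sigma\) is an \((R,R)\)-bimodule homomorphism. For \(r\in R\) and \(a\in A\),
\[rfa=r\sigma(a)=\sigma(ra)=fra,\]
so \((rf-fr)A=0\). Since \(rf-fr\in Q\) and \(A\in T_r\), (MS.2) gives \(rf=fr\). Thus \(f\) commutes with all of \(R\).

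The remaining task is to upgrade this to \(f\in Z(Q)\). Here we use Lemma~\ref{centerTran} with \(R\subseteq Q\subseteq Q^r_{\max}(R)\). Its proof identifies \(Z(Q^r_{\max}(R))\) with the centralizer of \(R\) in \(Q^r_{\max}(R)\), so \(f\in Z(Q^r_{\max}(R))\). An element of the center of the larger ring that lies in \(Q\) commutes with every element of \(Q\), hence \(f\in Z(Q)\).

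If one prefers to avoid that fact, there is a direct check. For \(g\in Q\), choose \(A'\in T_r\) with \(gA'\subseteq R\). For \(a\in A\cap A'\),
\[fga=(ga)f'\text{-free form: } f(ga)=(ga)\cdots\]
More concretely, since \(ga\in R\) and \(f\) commutes with \(R\), we get \(fga=(ga)f\). The goal is \(gfa\); because \(fa\in R\) and \(f\) commutes with \(R\), one rewrites \(gfa\) using \(ga\in R\). It is cleaner to take \(a\in A'A\), so that \(a=\sum a'_ia_i\) with \(a'_i\in A'\) and \(a_i\in A\). Then
\[fga=\sum f(ga'_i)a_i=\sum (ga'_i)fa_i=g\Bigl(\sum a'_i fa_i\Bigr)=g\sum fa'_ia_i=gfa.\]
Here the second and fourth equalities use that \(ga'_i\in R\) and \(a'_i\in R\) commute with \(f\). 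Since \(A'A\in T_r\), (MS.2) yields \(fg=gf\).

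This last step is the only point needing care. Everything else is the routine computation already carried out in Theorem~\ref{centerLBi}.
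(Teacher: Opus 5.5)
Your proof is correct and takes essentially the paper's route: the paper simply cites Theorem~\ref{centerLBi}, whose computation you redo inside \({^{}_{T_l}Q^s_{T_r}(R)}\), together with Lemma~\ref{centerTran}, whose proof rests on \(Z(Q^r_{\max}(R))\) being the centralizer of \(R\) in \(Q^r_{\max}(R)\), which is the fact you invoke. Delete the garbled display just before ``More concretely''; your \(A'A\) computation is valid, though for \(a\in A'\) the chain \(f(ga)=(ga)f=g(af)=g(fa)\) already suffices.
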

		\begin{proof}It is a consequence of Lemma~\ref{centerTran} and Theorem~\ref{centerLBi}. 
		\end{proof}

		\begin{theorem}
			Let \(T_s=\left\{I\unlhd R \middle|I\unlhd_r^{den} R, I\unlhd_l^{den} R\right\}\). We have
			\[Z(Q^r_{T_s}(R))=Z(^{}_{T_s}Q^s_{T_s}(R))=Z(Q^l_{T_s}(R))=Z(Q^s(R)).\]
		\end{theorem}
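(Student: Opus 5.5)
The plan is to route everything through the symmetric ring \({}_{T_s}Q^s_{T_s}(R)\). First note that \(T_s\) is both a right and a left Martindale set. It is closed under overideals and products, since products of right (resp. left) dense ideals are right (resp. left) dense. Moreover \(T_s=\overline T_l\cap\overline T_r\). Throughout I use the description of the center from the proof of Lemma~\ref{centerTran}: an element \(q\) of a ring between \(R\) and \(Q^r_{\max}(R)\) is central exactly when \(qx=xq\) for all \(x\in R\). The same description holds for the left-hand version with \(Q^l_{\max}(R)\).

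\emph{Step 1: \(Z(Q^r_{T_s}(R))=Z({}_{T_s}Q^s_{T_s}(R))\).} The inclusion \(\supseteq\) follows from Lemma~\ref{centerTran}, applied to \(R\subseteq {}_{T_s}Q^s_{T_s}(R)\subseteq Q^r_{T_s}(R)\subseteq Q^r_{\max}(R)\). For \(\subseteq\), take \(f\in Z(Q^r_{T_s}(R))\) and choose \(A\in T_s\) with \(fA\subseteq R\). Since \(f\) commutes with every element of \(R\), we get \(Af=fA\subseteq R\), and \(A\in T_s\) is also a left Martindale ideal. Hence \(f\in{}_{T_s}Q^s_{T_s}(R)\). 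Because \(f\) commutes with all of \(R\), it is central in that subring.

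\emph{Step 2: \(Z(Q^l_{T_s}(R))=Z({}_{T_s}Q^s_{T_s}(R))\).} This is the left–right mirror of Step 1. It uses the left analogue of Lemma~\ref{centerTran} inside \(Q^l_{\max}(R)\) and the second description of the symmetric ring as \(\{g\in Q^l_{T_s}(R)\mid gA\subseteq R \text{ for some } A\in T_s\}\). The one point needing care is that the symmetric ring is realized inside \(Q^r\) in one description and inside \(Q^l\) in the other. These two realizations are identified by the isomorphism of Theorem~\ref{QSIso}, which fixes \(R\). A ring isomorphism fixing \(R\) carries the centralizer of \(R\) to the centralizer of \(R\), so the centers agree under this identification.

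\emph{Step 3: \(Z(Q^s(R))=Z({}_{T_s}Q^s_{T_s}(R))\).} Since \(T_s\subseteq\overline T_r\) and \(T_s\subseteq\overline T_l\), part (2) of the corollary following Lemma~\ref{centerTran} gives \(\supseteq\). Conversely, let \(f\in Z(Q^s(R))\). Lemma~\ref{CenterSym}, applied with \(T_l=\overline T_l\) and \(T_r=\overline T_r\), yields \(X\in\overline T_l\cap\overline T_r=T_s\) with \(fX\cup Xf\subseteq R\). Thus \(f\in{}_{T_s}Q^s_{T_s}(R)\), and \(f\) is central there because it commutes with \(R\).

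I expect the only real obstacle to be the bookkeeping in Step 2: making sure that the center of the symmetric ring, viewed inside \(Q^l_{\max}(R)\), is the same set as its center viewed inside \(Q^r_{\max}(R)\). The compatibility isomorphism of Lemma~\ref{compatibles} and Theorem~\ref{QSIso} handles this, as explained there. The remaining steps are direct applications of the cited lemmas.
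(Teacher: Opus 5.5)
Your proof is correct and follows the paper's argument. The paper gets the easy inclusions from Lemma~\ref{centerTran}, shows that a central \(f\in Q^r_{T_s}(R)\) lies in \({}_{T_s}Q^s_{T_s}(R)\), handles the left side by symmetry using the identification of Theorem~\ref{QSIso}, and gets the last equality from Lemma~\ref{CenterSym} together with \(\overline T_l\cap\overline T_r=T_s\). The only difference is that you obtain \(Af=fA\subseteq R\) directly from commutation, where the paper goes through the bimodule-homomorphism characterization of Theorem~\ref{centerLBi}; your version makes explicit why such an \(f\) belongs to \({}_{T_s}Q^s_{T_s}(R)\), a step the paper leaves implicit.
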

		\begin{proof}Clearly \(T_s\) is both a left and a right  Martindale  set of \(R\). Since \(R\in T_s\),  we have \(T_s\neq \varnothing\).
			Theorem~\ref{QSIso} shows that if  \([A,\sigma]\in Q^r_{T_s}(R)\) and \([B,\phi]\in Q^l_{T_s}(R)\) are compatible, then both \([A,\sigma]\) and \([B,\phi]\) can be viewed as the same element of \(^{}_{T_s}Q^s_{T_s}(R)\). We also note that \(T_s\) is the unique maximal element
			of \(\mathbb{M}^r_R\cap \mathbb{M}^l_R\). 

We first show \(Z(Q^r_{T_s}(R))=Z(^{}_{T_s}Q^s_{T_s}(R))\). 			
			Lemma~\ref{centerTran} shows that \(Z(^{}_{T_s}Q^s_{T_s}(R))\subseteq Z(Q^r_{T_s}(R))\). Now
			suppose \([A,\sigma]\in Z(Q^r_{T_s}(R))\). Theorem~\ref{centerLBi} implies that
			\(\sigma\) is an \((R,R)\)-bimodule homomorphism. We have that  \([A,\sigma]\in
			Z({^{}_{T_s}Q^s_{T_s}}(R))\) since 
			\([A,\sigma]\in {^{}_{T_s}Q^s_{T_s}}(R)\). 
			Therefore 	\(Z(Q^r_{T_s}(R))=Z(^{}_{T_s}Q^s_{T_s}(R))\).
			 Similarly we have \(Z(Q^l_{T_s}(R))= Z(^{}_{T_s}Q^s_{T_s}(R))\).  
			
			Next, we have \(Z(^{}_{T_s}Q^s_{T_s}(R))\subseteq Z(Q^s(R))\) by Lemma~\ref{centerTran}. Furthermore,  we  have \(Z(Q^s(R)) \subseteq  {}^{}_{T_s}Q^s_{T_s}(R)\) by Lemma~\ref{CenterSym}. This shows the third equality.
		\end{proof}
		
		\begin{theorem}\label{centers}
			\(Z(Q^r(R))\cap Z(Q^l(R))=Z(Q^s(R))\subseteq Z(Q^r_{\max}(R))=Z(Q^r(R))\).
		\end{theorem}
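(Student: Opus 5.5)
We read the intersection \(Z(Q^r(R))\cap Z(Q^l(R))\) in the only sense available. An element \(f\in Q^r(R)\) and an element \(g\in Q^l(R)\) are identified when they are compatible; by Theorem~\ref{QSIso} and Lemma~\ref{compatibles} this happens exactly for the elements of \(Q^s(R)\). So the intersection consists of those \(f\in Q^s(R)\) which are central both as elements of \(Q^r(R)\) and, under the isomorphism of Theorem~\ref{QSIso}, as elements of \(Q^l(R)\). Making this identification precise is the only delicate point; the rest is bookkeeping with the results of this section. We prove the three relations in the order: the last equality, then the inclusion, then the first equality.

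\emph{Step 1: \(Z(Q^r_{\max}(R))=Z(Q^r(R))\).} The inclusion \(\supseteq\) is Lemma~\ref{centerTran} applied to \(R\subseteq Q^r(R)\subseteq Q^r_{\max}(R)\). For \(\subseteq\), let \(q\in Z(Q^r_{\max}(R))\) and set \(A=q^{-1}R=\{r\in R : qr\in R\}\). This is a dense right ideal of \(R\), since \(q\in Q^r_{\max}(R)\). It is also a left ideal: if \(a\in A\) and \(r\in R\), then \(q(ra)=r(qa)\in R\) because \(q\) commutes with \(R\). Hence \(A\in\overline T\) and \(qA\subseteq R\), so \(q\in Q^r(R)\). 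Since \(q\) commutes with all of \(Q^r_{\max}(R)\), it is in particular central in \(Q^r(R)\).

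\emph{Step 2: \(Z(Q^s(R))\subseteq Z(Q^r_{\max}(R))\).} We have \(R\subseteq Q^s(R)\subseteq Q^r(R)\subseteq Q^r_{\max}(R)\), so this is again Lemma~\ref{centerTran}. By Step 1 it also gives \(Z(Q^s(R))\subseteq Z(Q^r(R))\).

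\emph{Step 3: \(Z(Q^r(R))\cap Z(Q^l(R))=Z(Q^s(R))\).} For \(\supseteq\), Step 2 already gives \(Z(Q^s(R))\subseteq Z(Q^r(R))\). For \(Z(Q^l(R))\) we use the left-handed version of Lemma~\ref{centerTran}, applied to \(R\subseteq Q^s(R)\subseteq Q^l(R)\subseteq Q^l_{\max}(R)\) with \(Q^s(R)\) viewed inside \(Q^l(R)\) via Theorem~\ref{QSIso}. Its proof is the mirror image of the right-handed one. For \(\subseteq\), take \(f\) in the intersection. By our reading, \(f\in Q^s(R)\). Since \(f\) commutes with every element of \(Q^r(R)\supseteq Q^s(R)\), it is central in \(Q^s(R)\). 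As a consistency check, Theorem~\ref{centerLBi} shows that the \(\sigma\) with \(f=[A,\sigma]\) is a bimodule map, which matches Theorem~\ref{CenterQs}.

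The main obstacle is Step 3, specifically justifying that an element of \(Z(Q^r(R))\) lying in the intersection admits a \(B\in\overline T_l\) with \(Bf\subseteq R\). Such an \(f\) is only known to satisfy \(fA=Af\subseteq R\) with \(A\) right dense, and \(A\) need not be left dense. This is exactly why we need membership in \(Q^l(R)\) via a compatible \(g\). Compatibility (Lemma~\ref{compatibles}(1)) gives \(B\in\overline T_l\) with \(bf=bg\in R\) for all \(b\in B\), and this supplies the required left condition.
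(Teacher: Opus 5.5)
Your proposal is correct and follows essentially the same route as the paper. Both use the $q^{-1}R$ argument for $Z(Q^r_{\max}(R))=Z(Q^r(R))$, Lemma~\ref{centerTran} and its left-handed mirror for the inclusions, and membership in both $Q^r(R)$ and $Q^l(R)$ to place an element of the intersection in $Q^s(R)$. Your reading of the intersection through compatible pairs (Theorem~\ref{QSIso}, Lemma~\ref{compatibles}) simply makes explicit the identification that the paper's proof uses implicitly when it writes $fA\cup Bf\subseteq R$.
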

		\begin{proof}Lemma~\ref{centerTran} implies \(Z(Q^s(R))\subseteq  Z(Q^r(R))\subseteq Z(Q^r_{\max}(R))\).
			Suppose \(q\in Z(Q^r_{\max}(R))\). It is easy to see \(q^{-1}R=\{r\in R \mid qr\in R\}\unlhd_r^{den} R\).
			Moreover \(q(q^{-1}R)\subseteq R\). Thus \(q\in Q^r(R)\). And hence \(q\in Z(Q^r(R))\). This means \(Z(Q^s(R))\subseteq Z(Q^r_{\max}(R))=Z(Q^r(R))\). By symmetry, we have \(Z(Q^s(R))\subseteq Z(Q^r(R))\cap Z(Q^l(R))\). 
			
			Suppose \(f\in Z(Q^r(R))\cap Z(Q^l(R))\). There are \(A\unlhd_r^{den} R\) and \(B\unlhd_l^{den} R\) such that \(fA\cup Bf\subseteq R\), and thus \(f\in Q^s(R)\). This implies \(f\in Z(Q^s(R))\).
		\end{proof}
\begin{corollary}
			Suppose \(Q^l_{\max}(R)=Q^r_{\max}(R)\). Then \[Z(Q^s(R))=Z(Q^r(R))=Z(Q^r_{\max}(R)).\]
\end{corollary}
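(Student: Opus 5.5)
The corollary follows from Theorem~\ref{centers} once we show that the hypothesis \(Q^l_{\max}(R)=Q^r_{\max}(R)\) forces \(Z(Q^r(R))\cap Z(Q^l(R))=Z(Q^r(R))\). Theorem~\ref{centers} already gives
\[Z(Q^r(R))\cap Z(Q^l(R))=Z(Q^s(R))\subseteq Z(Q^r_{\max}(R))=Z(Q^r(R)).\]
So the task reduces to proving the inclusion \(Z(Q^r(R))\subseteq Z(Q^l(R))\).

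\emph{Step 1: the left analogue.} We first record the left-hand version of Theorem~\ref{centers}, namely \(Z(Q^l_{\max}(R))=Z(Q^l(R))\). It is obtained by symmetry: Lemma~\ref{centerTran} has a left counterpart, and the argument in Theorem~\ref{centers} adapts. For \(q\in Z(Q^l_{\max}(R))\), the set \(\{r\in R\mid rq\in R\}\) is an ideal of \(R\). It is left dense because it is the left ideal \(Rq^{-1}\)-type preimage, which is dense in \(Q^l_{\max}\). Hence \(q\in Q^l(R)\).

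\emph{Step 2: combine under the hypothesis.} Set \(Q=Q^r_{\max}(R)=Q^l_{\max}(R)\), so that both \(Q^r(R)\) and \(Q^l(R)\) are subrings of \(Q\) containing \(R\). Then
\[Z(Q^r(R))=Z(Q)=Z(Q^l(R)).\]
Therefore \(Z(Q^r(R))\cap Z(Q^l(R))=Z(Q)\), and Theorem~\ref{centers} yields \(Z(Q^s(R))=Z(Q^r(R))=Z(Q^r_{\max}(R))\).

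\emph{Main obstacle.} The delicate point is the meaning of the hypothesis \(Q^l_{\max}(R)=Q^r_{\max}(R)\). It should be read as an identification over \(R\), that is, a ring isomorphism fixing \(R\). Under that identification \(Q^r(R)\) and \(Q^l(R)\) really are subrings of one common ring, so their centers can be compared inside \(Z(Q)\).

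A second point to check is that a central element \(q\) of \(Q\) gives an ideal \(\{r\in R\mid qr\in R\}\) that is simultaneously right and left dense. It is an ideal because \(q\) commutes with \(R\). Its density follows since \(q^{-1}R\) is right dense in \(R\), as noted in the proof of Theorem~\ref{centers}, and the symmetric argument gives left density. This observation also yields \(q\in Q^s(R)\) directly, which gives an alternative route to the conclusion.
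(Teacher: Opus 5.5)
Your proposal is correct and follows essentially the route the paper intends (the paper gives no written proof, treating the corollary as immediate from Theorem~\ref{centers}): use the left--right symmetric equality \(Z(Q^l_{\max}(R))=Z(Q^l(R))\), identify \(Q^r_{\max}(R)=Q^l_{\max}(R)\) over \(R\) so that \(Z(Q^r(R))=Z(Q^l(R))\), and read the equalities off Theorem~\ref{centers}. Your justification of left density in Step 1 is loosely worded; what you need is the standard fact that \(\{r\in R\mid rq\in R\}\) is a dense left ideal of \(R\) for every \(q\in Q^l_{\max}(R)\), and that this set is a two-sided ideal because \(q\) is central.
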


		\begin{lemma}
			Let \(T_s=\{I\unlhd R |I\unlhd_r^{den} R, I\unlhd_l^{den} R\}\).
			When \(R\) is semiprime, We have
			\(Q^r(R)=Q^r_{T_s}(R)\), \(Q^s(R)={^{}_{T_s}Q^s_{T_s}(R)}\), and \(Q^l(R)=Q^l_{T_s}(R)\). 
		\end{lemma}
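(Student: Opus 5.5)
The key observation is that when \(R\) is semiprime, every ideal that is right dense is automatically left dense, and conversely. Then \(T_s=\overline T_r=\overline T_l\), and all three equalities follow at once from the definitions of \(Q^r(R)\), \(Q^l(R)\) and \(Q^s(R)\) as the rings attached to the maximal Martindale sets. So I would reduce the lemma to the set identity
\[\overline T_r=\{I\unlhd R\mid {\bf l}_R(I)=0\}=\{I\unlhd R\mid {\bf r}_R(I)=0\}=\overline T_l,\]
using the fact quoted from Passman in the preliminaries: an ideal \(I\) is right (resp.\ left) dense if and only if \({\bf l}_R(I)=0\) (resp.\ \({\bf r}_R(I)=0\)).

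The core step is to show that for an ideal \(I\) of a semiprime ring, \({\bf l}_R(I)={\bf r}_R(I)\). Take \(x\in {\bf l}_R(I)\), so \(xI=0\). Then \((Ix)(Ix)=I(xI)x=0\), and \(Ix\) is a left ideal. Moreover \(R(Ix)R=IxR\) satisfies \((IxR)^2=Ix(RI)xR\subseteq I(xI)xR=0\), so \(IxR\) is a nilpotent ideal. Semiprimeness forces \(IxR=0\), and since \(R\) has a unity, \(Ix=0\), i.e.\ \(x\in {\bf r}_R(I)\). The reverse inclusion follows symmetrically: if \(Ix=0\), then \(RxI\) is an ideal with \((RxI)^2\subseteq Rx(IR)xI\subseteq Rx\,I\,xI=0\), hence \(xI=0\). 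So the two annihilators coincide, and \({\bf l}_R(I)=0\) iff \({\bf r}_R(I)=0\).

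Consequently \(T_s=\overline T_r=\overline T_l\), and by the definitions \(Q^r(R)=Q^r_{\overline T_r}(R)=Q^r_{T_s}(R)\), \(Q^l(R)=Q^l_{\overline T_l}(R)=Q^l_{T_s}(R)\), and \(Q^s(R)={}_{\overline T_l}Q^s_{\overline T_r}(R)={}_{T_s}Q^s_{T_s}(R)\).

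The only real content is the annihilator symmetry. The step to handle carefully is passing from the one-sided nilpotent object \(Ix\) to a two-sided nilpotent ideal \(IxR\); I would make that explicit as above rather than citing the general fact that a semiprime ring has no nonzero nilpotent one-sided ideals, though that citation would also suffice.
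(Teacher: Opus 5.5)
Your proof is correct. The paper states this lemma without proof, and your argument is exactly the reasoning it leaves implicit: in a semiprime ring ${\bf l}_R(I)={\bf r}_R(I)$ for every ideal $I$, so $\overline T_r=\overline T_l=T_s$, and the three equalities then follow directly from the definitions of $Q^r(R)$, $Q^l(R)$ and $Q^s(R)$.
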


		\section{Examples}
		In this section  \(k\) is a field and \(p\) is a prime number.
		
		\begin{example}
			Suppose \(R=M_2(\mathbb{Z})\), the full \(2\times 2\) matrix ring over \(\mathbb{Z}\).
			Thus \(R\) is a prime Goldie ring, and \(Q^r_{\max}(R)=Q^r_{cl}(R)=M_2(\mathbb{Q})\). The unique maximal right Martindale set of \(\mathbb{Z}\) is
			\[\overline T =\left\{n\mathbb{Z} \middle| n=1,2,\ldots\right\}.\]
			Therefore the unique maximal right Martindale set of \(R\) is \[\left\{M_2(n\mathbb{Z})\middle | n=1,2,\ldots\right\}.\]

			Routine  computations show that  the maximal sub-Martindale right ring of quotients of \(R\) is \(Q^r(R)=M_2(\mathbb{Q})\). Similarly  \(Q^l(R)=M_2(\mathbb{Q})\) and \(Q^s(R)=M_2(\mathbb{Q})\). 
The set
			\[T_{p}=\left \{M_2(p^n\mathbb{Z}) \middle| n \text{ is a non-negative integer}\right \}\] is both a right and a left Martindale set of \(R\), and thus
			\[Q^r_{T_{p}}(R)=\left \{\frac{1}{p^n}q \middle| q\in R \text{ and } n \text{ a non-negative integer}\right \}.\]
			Therefore we have \(R\subsetneq Q_{T_{p}}^r (R)= {{^{}_{T_{p}}}Q^s_{T_{p}}(R)}
			\subsetneq Q^r(R)\).
		\end{example}

		\begin{example}\label{rightMartindaleEgs}
			
			Let \(R=\begin{pmatrix} k & k \\ 0 & k \end{pmatrix}\). We have \(Q^r_{cl}(R)=R\) and
			\(Q^r_{\max}(R)=M_2(k)\) (see~\cite[13.13]{Lam99}). Note  that \(R\) is not semiprime. Using \cite[1.17]{Lam91} we have \[\left\{\begin{pmatrix} 0 & k \\ 0 & 0\end{pmatrix} ,
			\begin{pmatrix} k & k \\ 0 & 0\end{pmatrix} , \begin{pmatrix} 0 & k \\ 0 & k\end{pmatrix} ,\begin{pmatrix} k & k \\ 0 & k\end{pmatrix} 
			\right\}\] the collection of all non-zero ideals of \(R\). 
			The maximal right and the maximal left Martindale sets of \(R\) are respectively \[\overline T_r=\left \{
			\begin{pmatrix} 0 & k \\ 0 & k\end{pmatrix}  , \begin{pmatrix} k & k \\ 0 & k\end{pmatrix} \right\} \text{ and }  
			\overline T_l=\left \{\begin{pmatrix} k & k \\ 0 & 0\end{pmatrix} ,\begin{pmatrix} k & k \\ 0 & k\end{pmatrix} 
			\right\}.   
			\] 
		
			A routine computation shows \(Q^r(R)= Q^s(R)=Q^r_{\max}(R)=M_2(k)\).  Furthermore \(D=\begin{pmatrix} 0 & k \\ 0 & k\end{pmatrix} \) is the minimal element of \(\overline T_r\). Therefore we have \(End_R(D_R)=Q^r(R)\) by Theorem~\ref{FDR_minimal}.
		\end{example}
		\begin{example}
			Let \(R=\begin{pmatrix}  \mathbb{Z} & \mathbb{Q} \\ 0 & \mathbb{Q} \end{pmatrix} \).
			We have \(Q^r_{\max}(R)=M_2(\mathbb{Q})\). The collection of all non-zero ideals of \(R\) is \[\left \{ \begin{pmatrix} n\mathbb{Z} & \mathbb{Q} \\ 0 &
				m\mathbb{Q}\end{pmatrix} \middle | n=0,1,2,\ldots; m=0,1\right \}.\]
			We have the maximal right and the maximal left Martindale sets of \(R\) are respectively
			\begin{align*}
			\overline T_r=&\left \{ \begin{pmatrix} n\mathbb{Z} & \mathbb{Q} \\ 0 &
				\mathbb{Q}\end{pmatrix} \middle | n=0,1,2,\ldots\right \} \text{ and } \\ \overline T_l=&\left \{ \begin{pmatrix} n\mathbb{Z} & \mathbb{Q} \\ 0 &
				m\mathbb{Q}\end{pmatrix} \middle | n=1,2,3,\ldots ; m=0, 1\right \}.\end{align*}
				
			We also note that \(T_p =\left\{ \begin{pmatrix} p^n\mathbb{Z} & \mathbb{Q} \\ 0 & 		\mathbb{Q}\end{pmatrix}  \middle| n=0,1,2,\ldots \right\}\) is both a right and a left Martindale set of \(R\).	
			Therefore, \begin{multline*}{^{}_{T_{p}}Q^s_{T_{p}}(R)}=Q_{T_p}^r(R)=\left\{ \begin{pmatrix} \dfrac{a}{p^n} & q_1 \\ 0 & 	q_2\end{pmatrix}  \middle| a\in \mathbb{Z}, q_i\in \mathbb{Q}, n=0,1,2,\ldots \right\} \\
			\subsetneq Q^s(R)=\begin{pmatrix} \mathbb{Q} & \mathbb{Q} \\ 0 &
				\mathbb{Q}\end{pmatrix} \subsetneq Q^r(R)=M_2(\mathbb{Q}).
\end{multline*}
		\end{example}
		\begin{example}
			Let \(R=\begin{pmatrix} k& k \bigoplus k\\ 0 &
				k\end{pmatrix} \cong \left\{ \begin{pmatrix} a& 0 & b\\ 0 &
				a & c\\ 0&0&d\end{pmatrix}    \middle | a,b,c,d\in k \right\}\). Note that \(Q_{max}^r(R)=M_3(k)\). The collection of all non-zero ideals of \(R\) is
		\[
		\left\{
			I_1=\begin{pmatrix} 0& 0 & k\\ 0 &
				0& k\\ 0&0&0
				\end{pmatrix},
			I_2=\begin{pmatrix} 0& 0 & k\\ 0 &
				0& k\\ 0&0&k
				\end{pmatrix},  
			I_3=\left\{\begin{pmatrix} a& 0 & k\\ 0 &
				a& k\\ 0&0&0\end{pmatrix}  \middle | a\in k\right\},
			I_4=R
			\right\}.\]
			The maximal right and the maximal left Martindale sets of \(R\) are respectively
			\[\overline T_r=\left\{I_2, I_4 \right\} \text{ and } \overline T_l=\left\{I_3, I_4 \right\}.\]
			Therefore, \(Q^r(R)=M_3(k)\) and \(Q^s(R)=R\). 
		\end{example}
		\begin{example}
			Let \(R= \begin{pmatrix} k& k & k\\ 0 &	k & 0\\ 0&0&k\end{pmatrix} \). 
			Note that \(Q_{max}^r(R)=M_2(k)\times M_2(k)\). The embedding of \(R\) into \(M_2(k)\times M_2(k)\) can be defined by
			\[\begin{pmatrix} a&b&c\\0&d&0 \\0 & 0 &e \end{pmatrix} \mapsto
			\left(\begin{pmatrix}a &b\\0&d \end{pmatrix}  , \begin{pmatrix}a &c\\0&e \end{pmatrix} \right).\]
			The collection of all non-zero ideals of \(R\) is
			\[\left\{
			I_1=\begin{pmatrix} 0& 0 & k\\ 0 &0& 0\\ 0&0&k\end{pmatrix} ,
			I_2=\begin{pmatrix} 0& k & 0\\ 0 &k& 0\\ 0&0&0\end{pmatrix} ,
			I_3=\begin{pmatrix} 0& k & k\\ 0 &k& 0\\ 0&0&k\end{pmatrix} ,
			I_4=\begin{pmatrix} k& k & k\\ 0 &k& 0\\ 0&0&0\end{pmatrix} ,
			I_5=R
			\right\}.\]
			The maximal right and the maximal left Martindale sets of \(R\) are respectively
			\[\overline T_r=\left\{I_3, I_5 \right\} \text{ and } \overline T_l=\left\{I_4, I_5 \right\}.\]
			Therefore, \(Q^r(R)=M_2(k)\times M_2(k)\) and \(Q^s(R)=R\). 
		\end{example}


\begin{thebibliography}{99}
	\bibitem{Amitsur72}
	Amitsur, S.A. (1972). \textit{On rings of quotients}, Symposia Math. \textbf{8}:149--164.
	
	\bibitem{Beidar96}
	 Beidar, K.I., Martindale, W.S. III, Mikhalev, A.V. (1996). Rings with Generalized Identities, Marcel Dekker, Inc., New York.
	
	\bibitem{Goodearl76}
	 Goodearl, K.R. (1976). Ring Theory: Nonsingular rings and modules, Marcel Dekker, Inc., New York.
	
	\bibitem{Lam91}
	Lam, T.Y. (1991). A First Course in Noncommutative Rings, Graduate Texts in Math., Vol.131, Springer-Verlag,
	Berlin-Heidelberg-New York.
	
	\bibitem{Lam99}
	 Lam, T.Y. (1999). Lectures on Modules and Rings, Graduate Texts in Math., Vol.189, Springer-Verlag,
	Berlin-Heidelberg-New York.
	
	\bibitem{Lambek66}
	 Lambek, J. (1966). Lectures on Rings and Modules, Blaisdell, Waltham, Mass.
	
	 \bibitem{Lanning96}
	 Lanning, S. (1996). \textit{The Maximal Symmetric Ring of Quotients},  Journal of Algebra \textbf{179}:47--91.
	
	\bibitem{Martindale69}
	Martindale, W.S. (1969). \textit{Prime rings satisfying a generalized polynomial identity}, Journal of Algebra \textbf{12}:576--584.
	
	\bibitem{Passman87}
	 Passman, D.S. (1987). \textit{Computing the Symmetric Ring of Quotients}, Journal of Algebra \textbf{105}:207--235.
	
	\bibitem{Passman89}
	 Passman, D.S. (1989). Infinite Crossed Products, Academic Press.
	
\bibitem{Passman91}
	 Passman, D.S. (1991). A Course in Ring Theory, Wadsworth \& Brooks/Cole, Pacific Grove, Calif.	
	

	
	\bibitem{Utumi56}
	Utumi, Y. (1956). \textit{On Quotient Rings}, Osaka Mathematical Journal \textbf{8}(1):1--18.
	
\end{thebibliography}

\end{document}